\theoremstyle{definition}
\newtheorem{teor}{Theorem}[section]
\newtheorem{prop}[teor]{Proposition}
\newtheorem{cor}[teor]{Corollary}
\newtheorem{lem}[teor]{Lemma}
\newtheorem{defi}[teor]{Definition}
\newtheorem{ej}[teor]{Example}
\newtheorem{notation}[teor]{Notation}
\newtheorem{rem}[teor]{Remark}
\newtheorem{dyn}[teor]{Definition and Notation}
\newtheorem{nyl}[teor]{Lemma and Notation}
\newtheorem{cyn}[teor]{Corollary and Notation}
\newtheorem{main}[teor]{Main Theorem}
\newtheorem{Reminder}[teor]{Reminder }
\newcommand\N{\mathbb N}
\newcommand\R{\mathbb R}
\newcommand\C{\mathbb C}
\DeclareMathOperator\id{Id}
\DeclareMathOperator\rk{rank}
\DeclareMathOperator\ev{ev}
\DeclareMathOperator\poly{poly}
\title[Truncated GNS construction]{Detecting optimality and extracting solutions in polynomial optimization with the truncated GNS construction}
\author{Mar\'ia L\'opez Quijorna}
\begin{document}

\begin{abstract}

A basic closed semialgebraic subset of $\R^{n}$ is defined by simultaneous polynomial inequalities $p_{1}\geq0,\ldots,p_{m}\geq 0$. We consider Lasserre's relaxation hierarchy to solve the problem of minimizing a polynomial over such a set. These relaxations give an increasing sequence of lower bounds of the infimum. In this paper we provide a new certificate for the optimal value of a Lasserre relaxation be the optimal value of the polynomial optimization problem. This certificate is that a modified version of an optimal solution of the Lasserre relaxation is a generalized Hankel matrix. This certificate is more general than the already known certificate of an optimal solution being flat. In case we have optimality we will extract the potencial minimizers with a truncated version of the Gelfand-Naimark-Segal construction on the optimal solution of the Lasserre relaxation. We prove also that the operators of this truncated construction commute if and only if the matrix of this modified optimal solution is a generalized Hankel matrix. This generalization of flatness will bring us to reprove a result of Curto and Fialkow on the existence of quadrature rule if the optimal solution is flat and a result of Xu and Mysovskikh on the existance of a Gaussian quadrature rule if the modified optimal solution is generalized Hankel matrix. At the end, we provide a numerical linear algebraic algorithm for dectecting optimality and extracting solutions of a polynomial optimization problem.
\end{abstract}

\maketitle

\setlength{\parindent}{0pt}

\section{Notation}

Throughout this paper, we suppose $n\in\N=\{1,2,\ldots\}$ and abbreviate $(X_{1},\ldots,X_{n})$ by $\underline{X}$.  We let $\R[\underline{X}]$ denote the ring of real polynomials in n indeterminates. We  denote $\N_{0}:=\N\cup\{0\}$. For $\alpha\in\N^{n}_{0}$, we use the standard notation :
\begin{center}
$|\alpha|:=\alpha_{1}+ \cdots + \alpha_{n} $ and $\underline{X}^{\alpha}:=X_{1}^{\alpha_{1}}\cdots X_{n}^{\alpha_{n}}$
\end{center}
For a polynomial $p\in\R[\underline{X}]$ we denote $p=\sum_{\alpha}p_{\alpha}\underline{X}^{\alpha}$ ($a_{\alpha}\in\R$). For $d\in\N_{0}$, by the notation $\R[\underline{X}]_{d}:=\{\sum_{|\alpha|\leq d}a_{\alpha}\underline{X}^{\alpha}\text{ } |\text{ } a_{\alpha}\in\R \}$  we will refer to the vector space of polynomials with degree less or equal to $d$. Polynomials all of whose monomials have exactly the same degree $d\in\N_{0}$ are called $d$-forms.  They form a finite dimensional vector space that we will denote by:
\begin{equation}
\R[\underline{X}]_{=d}:=\{\sum_{|\alpha|=d}a_{\alpha}\underline{X}^{\alpha}\text{ }|\text{ }a_{\alpha}\in\R \} \nonumber
\end{equation}
so that
\begin{equation}
\R[\underline{x}]_{d}=\R[\underline{X}]_{0}\oplus\cdots\oplus\R[\underline{X}]_{d}.\nonumber
\end{equation}
We will denote by $s_{k}:=\dim \R[\underline{X}]_{k}$ and by $r_{k}:=\dim \R[\underline{X}]_{=k}$. For $d\in\N_{0}$ we denote $\R[\underline{X}]^{*}_{d}$ the dual space of $\R[\underline{X}]_{d}$ i.e. the set of linear forms from  $\R[\underline{X}]_{d}$ to $\R$ and for $\ell\in\R[\underline{X}]_{2d}^{*}$ we denote by $\ell':=\ell_{|\R[\underline{X}]_{2d-2}}$ the restricction of the linear form $\ell$ to the space $\R[\underline{X}]_{2d-2}$. For $d\in\N_{0}$ and $a\in\R^{n}$ we denote $\ev_{a}\in\R[\underline{X}]^{*}_{d}$ the linear form such that for all $p\in\R[\underline{X}]_{d}$, $\ev_{a}(p)=p(a)$.

\section{Introduction }

Let polynomials $f,p_{1},\ldots,p_{m}\in\R[\underline{X}]$ with $m\in\N_{0}$ be given. A polynomial optimization problem involves finding the infimum of $f$ over the so called basic closed semialgebraic set $S$, defined by:
\begin{equation}\label{semialgebraic}
S:=\{x\in\R^{n} |\text{ }  p_{1}(x)\geq 0,\ldots,p_{m}(x)\geq 0 \}
\end{equation}
and also, if it is possible, a polynomial optimization problem involves extracting optimal points or minimizers i.e. elements in the set:
\begin{center}
$S^{*}:=\{x^{*}\in S$ | $\forall x\in S$ $f(x^{*})\leq f(x)\}$
\end{center}

So from now on we will denote as $(P)$, to refer us to the above defined polynomial optimization problem, that is to say:
\begin{equation}\label{Pop}
\begin{aligned}
(P) \text{  minimize  } f(x) \text{ subject to } x\in S
\end{aligned}
\end{equation}
The optimal value of $(P)$, i.e. the infimum of $f(x)$ where $x$ ranges over all feasible solutions $S$ will be denoted by $P^{*}$, that is to say:
\begin{equation}
\begin{aligned}
&P^{*}:= 
&& \inf\{\text{ }f(x)\text{ } |\text{ } x\in S  \}\in\{-\infty\}\cup\R\cup\{\infty\}
\end{aligned}
\end{equation}

Note that  $P^{*}=+\infty$ if $S=\emptyset$ and $P^{*}=-\infty$ if and only if $f$ is unbounded from below on $S$, for example if $S=\R^{n}$ and $f$ is of odd degree.\\

For $d\in\N_{0}$ let us define: 
\begin{align}
    V_{d}:= (&1,X_{1}, X_{2}, \ldots,  X_{N},X_{1}^2,X_{1}X_{2},\ldots,X_{1}X_{n},\\
		&X_{2}^2,X_{2}X_{3},\ldots,X_{n}^2,\ldots,\ldots,X_{n}^d)^{T} \nonumber
		\end{align}
as a basis for the vector space of polynomials in $n$ variables of degree at most $d$. Then
\begin{center}
$V_{d}V_{d}^{T}=\left(
\begin{array}{ccccc}
 1         & X_{1}         & X_{2}          &  \cdots &    X_{n}^{d}    \\
 X_{1}     & X_{1}^2       & X_{1}X_{2}     &  \cdots &    X_{1}X_{n}^{d}   \\
 X_{2}     & X_{1}X_{2}    & X_{2}^2        &  \cdots &    X_{2}X_{n}^{d} \\
 \vdots    & \vdots        & \vdots         &  \ddots &    \vdots \\
 X_{n}^{d} &X_{1}X_{n}^{d} & X_{2}X_{n}^{d} &  \cdots &    X_{n}^{2d}\\
    \end{array}
   \right)\in\R[\underline{X}]_{2d}^{s_{d}\times s_{d}}$
	\end{center}

Let us substitute for every monomial $\underline{X}^{\alpha}\in\R[\underline{X}]_{2d}$ a new variable $Y_{\alpha}$. This matrix has the following form: 
\begin{equation}\label{generalizedHankel}
M_{d}:=\left(
\begin{array}{ccccc}
 Y_{(0,\ldots,0)}          & Y_{(1,\ldots,0)}               & Y_{(0,1,\ldots,0)}          &  \cdots &    Y_{(0,\ldots,1)}    \\
 Y_{(1,\ldots,0)}         & Y_{(2,\ldots,0)}              & Y_{(1,1,\ldots,0)}          &  \cdots &     Y_{(1,\ldots,d)}   \\
 Y_{(0,1,\ldots,0)}       & Y_{(1,1,\ldots,0)}            &  Y_{(0,2,\ldots,0)}          &  \cdots &   Y_{(0,1,\ldots,d)}   \\
 \vdots                    & \vdots        & \vdots         &  \ddots &    \vdots \\
Y_{(0,\ldots,d)}       & Y_{(1,\ldots,d)}     & Y_{(0,1,\ldots,d)}    &  \cdots &   Y_{(0,\ldots,2d)}   \\
    \end{array}
   \right)\in\R[\underline{Y}]_{1}^{s_{d}\times s_{d}}
\end{equation}

\begin{defi} Every matrix $M\in\R^{s_{d}\times s_{d}}$ with the same shape than the matrix \eqref{generalizedHankel} is called a generalized Hankel matrix of order $d$. We denote the affine linear space of generalized Hankel matrix of order d by:
\begin{equation}\label{ghankel}
H_{d}:=\{\text{ }M_{d}(y)\text{ }|\text{ }y\in\R^{s_{2d}} \}\nonumber
\end{equation}
\end{defi}

For $p\in\R[\underline{X}]_{k}$ denote $d_{p}:=\lfloor\frac{k-\deg{p}}{2}\rfloor$  and consider the following symmetric matrix: 
\begin{equation}\begin{aligned}\label{localizing}
pV_{d_{p}}^{t}V_{d_{p}}= 
\left(\begin{array}{ccccc}
p & pX_{1} & pX_{2} & \cdots & pX_{n}^{d_{p}}\\
pX_{1} & pX_{1}^2 & pX_{1}X_{2} & \cdots & pX_{1}X_{n}^{d_{p}}\\
pX_{2} & pX_{2}X_{1} &  pX_{2}^{2} & \cdots &  pX_{2}X_{n}^{d_{p}}\\
\vdots & \vdots & \vdots & \ddots & \vdots \\
pX_{n}^{d_{p}} &  pX_{1}X_{n}^{d_{p}} &  pX_{n}^{d_{p}}X_{2} & \cdots & pX_{n}^{2d_{p}}
\end{array}\right)\in\R[\underline{X}]_{k}^{s_{d_{p}}\times s_{d_{p}}}
\end{aligned}\end{equation}
\begin{defi}
For  $p\in\R[\underline{X}]_{k}$ the localizing matrix of $p$ of degree $k$ is the matrix  resulting from substitute every monomial $\underline{X}^{\alpha}$ such that $|\alpha|\leq k$ in \eqref{localizing} for a new variable $Y_{\alpha}$. We denote this matrix by $M_{k,p}\in\R[\underline{Y}]_{1}^{s_{d_{p}}\times s_{d_{p}}}$.
\end{defi}
\begin{defi}
For a $t\times t$ real symmetric matrix $A$, the notation $A\succeq 0$ means that $A$ is positive semidefinite, i.e. $a^{T}Aa\geq 0$ for all $a\in\R^{t}$.
\end{defi}
\setlength{\parskip}{0cm}
 In order to give further characterizations of positive semidefiniteness, let us remember a very well know  theorem in linear algebra.
\begin{Reminder}\label{spectral}
Suppose $A\in\R^{t\times t}$ is symmetric. Then there is a diagonal matrix $D\in\R^{t\times t}$ and $U\in\R^{t\times t}$ orthogonal matrix, i.e. $UU^{T}=U^{T}U=I_{t}$, such that $U^{T}AU=D$ 
\end{Reminder}
\begin{Reminder}\label{mpsd}
Let $A\in\R^{t\times t}$ symmetric. The following are equivalent:
\begin{enumerate}
\item $A\succeq 0$.
\item All eigenvalues of $A$ are nonnegative.
\item There exists $B\in\R^{t\times t}$ such that $A=B^{T}B$.
\end{enumerate}
\end{Reminder}

\begin{proof}
$(1)\implies(2)$. Suppose $A\succeq 0$ and take $\lambda\in\R$ and eigenvalue of $A$ such that $Av=\lambda v$ for $v\in\R^{t}$ eigenvector of $A$, then $v\neq 0$. impplying that $0\leq v^{T}Av=v^{T}\lambda v=\lambda\lVert v \rVert^{2} $ then $\lambda\geq 0$.\\
$(2)\implies(3)$. Suppose that all eigenvalues of $A$ are nononnegative then by \ref{spectral} there exits $D$ diagonal matrix with nonnegative entries $\lambda_{i}\geq 0$ and $U$ orthogonal matrix such that $U^{T}AU=D$. Take $B:=RU^{T}\in\R^{t\times t}$ where $R$ is the diagonal matrix which entries are $\sqrt{\lambda_{i}}$.\\
$(3)\implies(1)$. Suppose there is $B\in\R^{t\times t}$ such that $A=B^{T}B$, take $a\in\R^{t}$ then  $a^{t}Aa=a^{t}B^{T}Ba=(Ba)(Ba)=\lVert Ba \rVert^{2}\geq 0$.
\end{proof}

\begin{defi}\label{relax}
Let $(P)$ be a polynomial optimization problem  as in \eqref{Pop} and 
let $k\in$ $\N_{0}\cup\{\infty\}$ such that $f,p_{1},\ldots, p_{m}\in\R[\underline{X}]_{k}$. The Moment relaxation (or Lasserre relaxation) of $(P)$ of degree $k$ is the following semidefinite optimization problem:
\begin{align}
&(P_{k})
\text{ minimize }
\sum_{|\alpha|\leq k} f_{\alpha}y_{\alpha} \text{ subject to } \\ 
&
M_{k,1}(y)\succeq 0, \ \ y_{\left(0,\ldots,0\right)}=1, \ \ M_{k,p_{i}}(y)\succeq 0 \nonumber
\end{align}
the optimal value of $(P_{k})$ that is to say, the infimum over all
\begin{equation}
 y=(y_{\left(0,\ldots,0 \right)},\ldots,y_{\left(0,\ldots,k \right)})\in\R^{s_{k}}\nonumber
 \end{equation} that ranges over all feasible solutions of $(P_{k})$ is denoted by $P^{*}_{k}\in\{-\infty\}\cup\R\cup\{\infty\} $.
\end{defi}

Given a polynomial optimization problem $(P)$ as in \eqref{Pop} and $M:=M_{d}(y)\in\R^{s_{d}\times s_{d}}$  an optimal solution of $(P_{2d})$, it is always  possible to find a matrix $W_{M}\in\R^{s_d\times r_{d}}$ such that $M$ can be decomposed in a block matrix of the following form  (see \ref{usefull} below for a proof):
$$M=
\left(
\begin{array}{c|c}
\makebox{$A_{M}$}&\makebox{$A_{M}W_{M}$}\\
\hline
  \vphantom{\usebox{0}}\makebox{$W^{T}_{M}A_{M}$}&\makebox{$C_{M}$}
\end{array}
\right)
$$
This useful result can be also found in \cite{Smu} and in \cite[Lemma 2.3]{recursiv}.
Define the following matrix:
\begin{equation}
\widetilde{M}:=\left(
\begin{array}{c|c}
\makebox{$A_{M}$}&\makebox{$A_{M}W_{M}$}\\
\hline
  \vphantom{\usebox{0}}\makebox{$W^{T}_{M}A_{M}$}&\makebox{$W^{T}_{M}A_{M}W_{M}$}
\end{array}\right)\nonumber
\end{equation}
In this paper we prove that $\widetilde{M}$ is well-defined, that is to say it does not depend from the election of $W_{M}$, and assuming that $W^{T}_{M}A_{M}W_{M}$ is a generalized Hankel matrix we will use a new method to find a decomposition:
\begin{equation}\label{factorization1}
\widetilde{M}=\sum_{i=1}^{r}\lambda_{i}V_{d}(a_{i})V_{d}(a_{i})^{T}
\end{equation}

where $r:=\rk M$ ,$a_{1},\ldots,a_{r}\in\R^{n}$ and $\lambda_{1}>0,\ldots,\lambda_{r}>0$. In this paper we will show that for some polynomial optimization problems if we have that $W_{M}^{T}A_{M}W_{M}$ is generalized Hankel and the nodes are contained in $S$, even if $M$ is not flat i.e. $W_{M}^{T}A_{M}W_{M}\neq C_{M}$ (see the definition in \ref{definiciones}), we can still claim optimality, that is to say that $a_{1},\ldots,a_{r}$ are global minimizers. We will also see some examples to discard optimality or in other words to discard that $M$ has a factorization as in \eqref{factorization1}, see \ref{madrugada}. Let us advance two results concerning optimality.

\begin{teor}\label{optimality}
Let $(P)$ be a polynomial optimization problem as in \eqref{Pop} and suppose that $M_{d}(y)\in\R^{s_{d}\times s_{d}}$ is an optimal solution of $(P_{2d})$ and $\widetilde{M_{d}(y)}$ is a generalized Hankel matrix. Then there are $a_{1},\ldots,a_{r}\in\R^{n}$ points and $\lambda_{1}>0,\ldots,\lambda_{r}>0$ weights such that:
\begin{equation}\label{auxiliar}
\widetilde{M_{d}(y)}=\sum_{i=1}^{r}\lambda_{i}V_{d}(a_{i})V_{d}(a_{i})^{T}
\end{equation}
where $r=\rk A_{M}$. Moreover if $\{a_{1},\ldots,a_{r}\}\subseteq S$ and $f\in\R[\underline{X}]_{2d-1}$ then $a_{1},\ldots,a_{r}$ are global minimizers of $(P)$ and $P^{*}=P^{*}_{2d}=f(a_{i})$ for all $i\in\{1,\ldots,r \}$.
\end{teor}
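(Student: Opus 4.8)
The plan is to split the statement into two parts: (i) the existence of the representation \eqref{auxiliar}, and (ii) the optimality claim under the extra hypotheses $\{a_1,\ldots,a_r\}\subseteq S$ and $f\in\R[\underline X]_{2d-1}$. For part (i), the key observation is that $\widetilde{M_d(y)}$ is built so that its upper-left block $A_M$ has full rank equal to $r=\rk A_M$ and the whole matrix factors through $A_M$: writing $B=(I\mid W_M)$ we have $\widetilde{M_d(y)}=B^T A_M B$, so $\rk\widetilde{M_d(y)}=\rk A_M=r$ and $\widetilde{M_d(y)}\succeq 0$ (it is a symmetric matrix congruent to a principal submatrix of $M\succeq 0$, which is positive semidefinite since $M$ is). Thus $\widetilde{M_d(y)}$ is a positive semidefinite generalized Hankel matrix of rank $r$ whose restriction to $\R[\underline X]_{d-1}$ — namely $A_M$ itself after deleting the last block row/column, but more importantly the $r\times r$ ``flat'' core — satisfies the flat extension hypothesis. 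I would then invoke the flat extension / Curto--Fialkow machinery (which the paper announces it will reprove, but which can be cited here): a flat, positive semidefinite generalized Hankel matrix admits a unique representing measure supported on exactly $r$ points, which gives the decomposition $\widetilde{M_d(y)}=\sum_{i=1}^r \lambda_i V_d(a_i)V_d(a_i)^T$ with $\lambda_i>0$. (Alternatively, since the later sections develop the truncated GNS construction, one would run that construction on $\widetilde{M_d(y)}$: the generalized-Hankel property is exactly what makes the multiplication operators commute, so one simultaneously diagonalizes them and reads off the $a_i$ as joint eigenvalue tuples and the $\lambda_i$ from the spectral projections.)

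For part (ii), suppose now $\{a_1,\ldots,a_r\}\subseteq S$ and $\deg f\le 2d-1$. First I would show $P^*_{2d}=f(a_i)$ for all $i$ by a sandwiching argument. Define the linear functional $\ell\in\R[\underline X]_{2d}^*$ associated to $\widetilde{M_d(y)}$, i.e. $\ell=\sum_{i=1}^r\lambda_i\,\ev_{a_i}$; since $\deg f\le 2d-1\le 2d$, $\ell(f)=\sum_i\lambda_i f(a_i)$. The crucial point is that $\widetilde{M_d(y)}$ and $M_d(y)$ agree on $\R[\underline X]_{2d-1}$ (they share the blocks $A_M$ and $A_MW_M$, which encode exactly the entries $Y_\alpha$ with $|\alpha|\le 2d-1$; only the bottom-right block $W_M^TA_MW_M$ versus $C_M$ differs, and that block sees only degree-$2d$ monomials). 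Hence the objective value of $y$ in $(P_{2d})$, which is $\sum_{|\alpha|\le k}f_\alpha y_\alpha$ with $k=2d-1$ for the purposes of $f$, equals $\ell(f)=\sum_i\lambda_i f(a_i)$. Next, each $a_i\in S$ is a feasible point of $(P)$, so $f(a_i)\ge P^*$; summing with weights $\lambda_i$ (and using $\sum\lambda_i=\ell(1)=y_{(0,\ldots,0)}=1$) gives $P^*_{2d}=\sum_i\lambda_i f(a_i)\ge P^*$. Conversely, the Lasserre relaxation is a relaxation, so $P^*_{2d}\le P^*$ always. Therefore $P^*_{2d}=P^*$, and then $\sum_i\lambda_i f(a_i)=P^*$ with each $f(a_i)\ge P^*$ and $\lambda_i>0$, $\sum\lambda_i=1$, forces $f(a_i)=P^*$ for every $i$; since $a_i\in S$ this means each $a_i$ is a global minimizer.

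The main obstacle is the bookkeeping in part (i): one must argue carefully that $\widetilde{M_d(y)}$ is genuinely a positive semidefinite matrix with a \emph{flat} core of the right size so that the moment-problem / GNS results apply, and in particular that the generalized-Hankel hypothesis on $\widetilde{M_d(y)}$ is what upgrades the bare linear-algebra decomposition $B^TA_MB$ into one of the Vandermonde form $\sum\lambda_i V_d(a_i)V_d(a_i)^T$ with genuine points $a_i\in\R^n$ — i.e. that the commuting multiplication operators are well-defined on the quotient and their joint spectrum lives in $\R^n$. Part (ii), by contrast, is a short sandwich once one has nailed down that $\widetilde{M_d(y)}$ and $M_d(y)$ really do agree in degrees $\le 2d-1$ and hence produce the same value of the (degree $\le 2d-1$) objective $f$.
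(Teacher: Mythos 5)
Your proposal is correct, and the optimality half (the sandwich via $P^{*}_{2d}\leq P^{*}$, agreement of $M_{d}(y)$ and $\widetilde{M_{d}(y)}$ in degrees $\leq 2d-1$, and $\sum_{i}\lambda_{i}=1$) is exactly the paper's argument, which is Proposition \ref{motiv}(iii) invoked through Theorem \ref{popcorn}. Where you differ is in how you obtain the decomposition \eqref{auxiliar}: you observe that $\widetilde{M_{d}(y)}=B^{T}A_{M}B$ with $B=(I\mid W_{M})$ is positive semidefinite of rank $r=\rk A_{M}$, so that (using the generalized Hankel hypothesis, which is what makes the linear form $\hat{L}:=L_{\widetilde{M_{d}(y)}}$ well defined at all) $\hat{L}$ is a \emph{flat} positive semidefinite linear form, and you then invoke the Curto--Fialkow flat extension theorem (the paper's Corollary \ref{implicacion}) to produce the $r$-atomic representation. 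The paper instead routes everything through the original (generally non-flat) solution $L=L_{M_{d}(y)}$: Theorem \ref{matrices} converts the Hankel condition on $\widetilde{M_{L}}$ into commutativity of the truncated GNS multiplication operators of $L$, Theorem \ref{importante} then gives the quadrature rule with $\dim T_{L}=\rk A_{M}$ nodes by simultaneous diagonalization, and the identification $M_{\hat{L}}=\widetilde{M_{L}}$ inside the proof of Theorem \ref{matrices} yields \eqref{auxiliar}. Your route is shorter if one is willing to cite Curto--Fialkow externally; the paper's route buys a self-contained proof (Corollary \ref{implicacion} is itself reproved via the GNS construction) and, more importantly, the quadrature rule on the larger set $G_{L}$ attached to $L$ itself, which is what Theorem \ref{popcorn} actually uses. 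Two small imprecisions to fix: $A_{M}$ need not have ``full rank'' (only $\rk\widetilde{M_{d}(y)}=\rk A_{M}$ is true and is all you need), and for the GNS operators of $\widetilde{M_{d}(y)}$ it is flatness (Proposition \ref{importante2}) that forces commutativity, the generalized Hankel property being needed only to define $\hat{L}$; the equivalence ``generalized Hankel $\iff$ commuting operators'' (Theorem \ref{matrices}) concerns the operators of $L$, not of $\hat{L}$.
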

\begin{proof}
The correspondence given in \ref{traduccion} together with the Theorem \ref{popcorn} will give us the proof.
\end{proof}
\begin{rem}\label{nonconstrained}
Let $(P)$ be a polynomial optimization problem without constraints. Suppose $M_{d}(y)\in\R^{s_{d}\times s_{d}}$ is an optimal solution of $(P_{2d})$ with $\widetilde{M_{d}(y)}$ a generalized Hankel matrix and that $f\in\R[\underline{X}]_{2d-1}$. Applying Theorem \ref{optimality} we get the decomposition \eqref{auxiliar}, and since we can ensure that $a_{1},\ldots,a_{r}\subseteq S =\R^{n}$ then they are global minimizers of $(P)$ and $P^{*}=P^{*}_{2d}=f(a_{i})$ for all $i\in\{1,\ldots,r \}$.
\end{rem}

\begin{ej}\label{porfavor}
Let us considerer the following polynomial optimization problem taken from \cite[Problem 4.7]{griegos}:
\begin{equation*}
\begin{aligned}
& {\text{minimize}}
& & f(\underline{x})=-12x_{1}-7x_{2}+x_{2}^2 \\
& \text{subject to}
& & -2x_{1}^4+2-x_{2}=0\\
& 
& & 0\leq x_{1}\leq 2\\
&
&& 0\leq x_{2}\leq 3\\
\end{aligned}
\end{equation*}
We get the optimal value $P^{*}_{4}=-16.7389$ associated to the following optimal solution:

\begin{equation}\label{segundoej}
M_{2}(y) =  \begin{blockarray}{ccccccc}
\text{ } & 1 & X_{1} & X_{2} & X_{1}^2 & X_{1}X_{2} & X_{2}^2 \\
\begin{block}{r(rrr|rrr)}
 1 &   1.0000  &  0.7175  &  1.4698 &   0.5149  &  1.0547  &  2.1604  \\
 X_{1} &   0.7175  &  0.5149  &  1.0547 &   0.3694  &  0.7568  &  1.5502   \\
 X_{2} &  1.4698  &  1.0547  &  2.1604 &   0.7568  &  1.5502  &  3.1755  \\ \cline{2-7}
 X_{1}^2 &   0.5149  &  0.3694  &  0.7568 &   0.2651  &  0.5430  &  1.1123  \\
 X_{1}X_{2} &   1.0547  &   0.7568 &  1.5502 &   0.5430  &  1.1123 &   2.2785   \\
 X_{2}^2 &   2.1604  &  1.5502  &  3.1755 &   1.1123  &  2.2785  &  8.7737   \\
   \end{block}
   \end{blockarray}
\end{equation}

and the modified moment matrix of $M_{2}(y)$ is the following:
\begin{equation}
\widetilde{M_{2}(y)} = \left(
\begin{array}{rrr|rrr}
    1.0000  &  0.7175  &  1.4698 &   0.5149  &  1.0547  &  2.1604 \\
    0.7175  &  0.5149  &  1.0547 &   0.3694  &  0.7568  &  1.5502  \\
    1.4698  &  1.0547  &  2.1604 &   0.7568  &  1.5502  &  3.1755 \\\cline{1-6}
    0.5149  &  0.3694  &  0.7568 &   0.2651  &  0.5430  &  1.1123 \\
    1.0547  &   0.7568 &   1.5502 &   0.5430  &  1.1123 &   2.2785 \\
    2.1604  &  1.5502  &  3.1755 &   1.1123  &  2.2785  &  \textbf{4.6675} 
   \end{array}
    \right)
\end{equation}

We get that $\widetilde{M_{2}(y)}$ is a generalized Hankel matrix and $f\in\R[X_{1},X_{2}]_{3}$ to conclude optimality, according with Theorem \ref{optimality}, it remains to calculate the factorization \eqref{factorization1} and check if the points are in $S$.  We will see in Section $5$ in \ref{volver} how to compute this factorization, in this case, it is easy to see that:
\begin{equation}
\widetilde{M_{2}(y)}=V_{2}(\alpha,\beta)V_{2}(\alpha,\beta)^{T}\nonumber
\end{equation}
where $\alpha:=0.7175$ and $\beta:=1.4698$. One can verify that $(\alpha,\beta)\in S$ and therefore we can conclude that $P^{*}_{4}=P^{*}=-16.7389$ is the optimal value and $(\alpha,\beta)$ is a minimizer.

\end{ej}

\begin{teor}\label{mpolyeder}
Let $(P)$ be a polynomial optimization problem given as in \eqref{Pop} and suppose that the $p_{i}$ from \eqref{semialgebraic} are all of degree at most $1$ (so that $S$ is a polyhedron). Suppose that $M_{d}(y)\in\R^{s_{d}\times s_{d}}$ is an optimal solution of $(P_{2d})$ and that $\widetilde{M_{d}(y)}$ is a generalized Hankel matrix. Then there are $a_{1},\ldots,a_{r}\in S$ and $\lambda_{1}>0,\ldots,\lambda_{r}>0$ weights such that:
\begin{equation}
\widetilde{M_{d}(y)}=\sum_{i=1}^{r}\lambda_{i}v_{d}(a_{i})v_{d}(a_{i})^{T} \nonumber
\end{equation}
Moreover if $f\in\R[\underline{X}]_{2d-1}$ then $a_{1},\ldots,a_{r}$ are global minimizers of $(P)$ and $P^{*}=P^{*}_{2d}=f(a_{i})$ for all $i=1,\ldots,r$.
\end{teor}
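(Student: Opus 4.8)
\emph{Proof proposal.} The plan is to read off the factorization directly from Theorem~\ref{optimality} and then to prove that, when $S$ is a polyhedron, the atoms produced are automatically feasible, so that the second assertion of Theorem~\ref{optimality} applies with no extra hypothesis.

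First I would apply Theorem~\ref{optimality} to $M:=M_{d}(y)$: since $M$ is optimal for $(P_{2d})$ and $\widetilde{M}:=\widetilde{M_{d}(y)}$ is a generalized Hankel matrix, there are $a_{1},\ldots,a_{r}\in\R^{n}$ and $\lambda_{1},\ldots,\lambda_{r}>0$ with $\widetilde{M}=\sum_{i=1}^{r}\lambda_{i}V_{d}(a_{i})V_{d}(a_{i})^{T}$ and $r=\rk A_{M}$. Being generalized Hankel, $\widetilde{M}=M_{d}(\tilde y)$ for a unique $\tilde y\in\R^{s_{2d}}$, and comparing $(\alpha,\beta)$-entries on the two sides of the factorization gives $\tilde y_{\gamma}=\sum_{i=1}^{r}\lambda_{i}a_{i}^{\gamma}$ for every $\gamma$ with $|\gamma|\leq 2d$. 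Restricting the factorization to the principal $s_{d-1}\times s_{d-1}$ block (the first $s_{d-1}$ coordinates of $V_{d}(a_{i})$ being exactly $V_{d-1}(a_{i})$) yields $A_{M}=\sum_{i=1}^{r}\lambda_{i}V_{d-1}(a_{i})V_{d-1}(a_{i})^{T}$; this is a sum of $r$ positive semidefinite rank-one matrices whose rank equals $r=\rk A_{M}$, so the vectors $V_{d-1}(a_{1}),\ldots,V_{d-1}(a_{r})$ are linearly independent in $\R^{s_{d-1}}$.

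The crucial observation is that replacing $C_{M}$ by $W_{M}^{T}A_{M}W_{M}$ changes only the moments of degree exactly $2d$: those are precisely the entries occurring in the block $C_{M}$, whereas every $y_{\gamma}$ with $|\gamma|\leq 2d-1$ occurs in $A_{M}$ or in the off-diagonal block $A_{M}W_{M}$ and is left untouched; hence $\tilde y_{\gamma}=y_{\gamma}$ for all $|\gamma|\leq 2d-1$. Now fix a constraint $p_{j}$. If $\deg p_{j}=0$, then $M_{2d,p_{j}}(y)=p_{j}M_{d}(y)\succeq 0$ together with $y_{(0,\ldots,0)}=1$ forces $p_{j}\geq 0$, so $p_{j}(a_{i})\geq 0$. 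If $\deg p_{j}=1$, then $d_{p_{j}}=d-1$, so every entry of the localizing matrix $M_{2d,p_{j}}$ is a linear combination of moments $y_{\gamma}$ with $|\gamma|\leq 2d-1$; consequently $M_{2d,p_{j}}(y)=M_{2d,p_{j}}(\tilde y)$, and substituting $\tilde y_{\gamma}=\sum_{i}\lambda_{i}a_{i}^{\gamma}$ and expanding $p_{j}$ into monomials gives
\[
\sum_{i=1}^{r}\lambda_{i}\,p_{j}(a_{i})\,V_{d-1}(a_{i})V_{d-1}(a_{i})^{T}=M_{2d,p_{j}}(\tilde y)=M_{2d,p_{j}}(y)\succeq 0 .
\]

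To finish, collect the independent vectors into the full-column-rank matrix $V:=\bigl(V_{d-1}(a_{1})\mid\cdots\mid V_{d-1}(a_{r})\bigr)$, so that the displayed inequality reads $V\Lambda_{j}V^{T}\succeq 0$, where $\Lambda_{j}$ is the diagonal matrix with entries $\lambda_{1}p_{j}(a_{1}),\ldots,\lambda_{r}p_{j}(a_{r})$; left-multiplying by a left inverse $V^{+}$ of $V$ (which exists because $V$ has full column rank) and right-multiplying by $(V^{+})^{T}$ gives $\Lambda_{j}\succeq 0$, i.e. $\lambda_{i}p_{j}(a_{i})\geq 0$, hence $p_{j}(a_{i})\geq 0$ since $\lambda_{i}>0$. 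As $i$ and $j$ were arbitrary, $\{a_{1},\ldots,a_{r}\}\subseteq S$; combined with $f\in\R[\underline{X}]_{2d-1}$, the second assertion of Theorem~\ref{optimality} then yields that $a_{1},\ldots,a_{r}$ are global minimizers of $(P)$ with $P^{*}=P^{*}_{2d}=f(a_{i})$ for every $i$. The one delicate point is the crucial observation above: it is what makes the localizing-matrix constraints on $M$ pass verbatim to $\widetilde{M}$, and it is exactly there that the assumption $\deg p_{i}\leq 1$ enters; everything else is standard linear algebra already carried by Theorem~\ref{optimality}.
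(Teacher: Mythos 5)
Your proposal is correct, and its first half coincides with the paper's (both reduce the factorization to Theorem~\ref{optimality}, i.e.\ to \ref{traduccion} and \ref{popcorn}); where you genuinely diverge is the step showing $a_{1},\ldots,a_{r}\in S$. The paper routes this through Corollary~\ref{poliedro}: it uses the flat extension $\hat L=\sum_{i}\lambda_{i}\ev_{a_{i}}$ from Theorem~\ref{main}, which agrees with $L$ up to degree $2d-1$, invokes Lemma~\ref{interpolation} to get interpolation polynomials $q_{i}$ of degree at most $d-1$ with $q_{i}(a_{j})=\delta_{ij}$, and then uses feasibility of $L$ on the truncated quadratic module to conclude $0\leq L(q_{i}^{2}p_{j})=\hat L(q_{i}^{2}p_{j})=\lambda_{i}p_{j}(a_{i})$. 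You instead argue entirely at the matrix level: for $\deg p_{j}\leq 1$ the localizing matrix $M_{2d,p_{j}}(y)$ involves only moments of degree $\leq 2d-1$, which $y$ and $\tilde y$ share, so $M_{2d,p_{j}}(y)=\sum_{i}\lambda_{i}p_{j}(a_{i})V_{d-1}(a_{i})V_{d-1}(a_{i})^{T}\succeq 0$, and the linear independence of the $V_{d-1}(a_{i})$ (which you extract from $r=\rk A_{M}$ in Theorem~\ref{optimality}) lets you peel off each coefficient with a left inverse. The two mechanisms are dual formulations of the same idea: your left inverse $V^{+}$ is exactly the matrix of coefficient vectors of the degree-$(d-1)$ interpolation polynomials that Lemma~\ref{interpolation} provides, and your ``crucial observation'' is the matrix counterpart of $L=\hat L$ on $\R[\underline{X}]_{2d-1}$. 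What your packaging buys is self-containedness: given Theorem~\ref{optimality} you bypass Lemma~\ref{interpolation}, the flat extension $\hat L$, and Corollary~\ref{poliedro} altogether, at the price of leaning on the rank statement $r=\rk A_{M}$ for the linear independence; the paper's version isolates the interpolation lemma as a reusable tool and covers constant constraints uniformly, whereas your separate (and correct) treatment of the degree-$0$ case is a minor extra branch.
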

\begin{proof}
The correspondence given in Corollary \ref{traduccion} together with the Theorem \ref{poliedro} will give us the result.
\end{proof}

\begin{ej}\label{amigo}
Let us consider the following polynomial optimization problem, taken from \cite[page 18]{tonto}, whose objective function is the Moztkin polynomial \cite[Prop.1.2.2]{mar}:
\begin{equation*}
\begin{aligned}
& {\text{minimize}}
& & f(x)=x_{1}^{4}x_{2}^{2}+x_{1}^2x_{2}^{4}-3x_{1}^2x_{2}^2+1 \\
& \text{subject to}
& & -2\leq x_{1}\leq 2\\
& 
& & -2\leq x_{2}\leq 2\\  
\end{aligned}
\end{equation*}
We get the optimal value  $P^{*}_{8}=6.2244\cdot 10^{-9}$ from the following optimal solution of $(P_{8})$:

\begin{equation}\label{noseque}
M:=M_{8,1}(y)=\left(
\begin{array}{c|c}
\makebox{$A_{M}$}&\makebox{$A_{M}W_{M}$}\\
\hline
  \vphantom{\usebox{0}}\makebox{$W^{T}_{M}A_{M}$}&\makebox{$C_{M}$}
\end{array}\right)
\end{equation}

where:

{\tiny
\begin{equation}
 A_{M} =\begin{blockarray}{ccccccccccc}
 \text{ } & 1 & X_{1} & X_{2} & X_{1}^{2} & X_{1}X_{2} & X_{2}^{2} & X_{1}^3 & X_{1}^{2}X_{2} & X_{1}X_{2}^{2} & X_ {1}^{3}  \\
\begin{block}{r(rrrrrrrrrr)}
1 &     1.0000  & -0.0005 &  -0.0004 &   1.0000  & -0.0000  &  1.0000 &  -0.0005  & -0.0004  & -0.0005 &  -0.0004 \\
X_{1} &   -0.0005  &  1.0000 &  -0.0000 &  -0.0005  & -0.0004  & -0.0005 &   1.0000  & -0.0000  &  1.0000 &  -0.0000 \\
X_{2} &   -0.0004  & -0.0000 &   1.0000 &  -0.0004  & -0.0005  & -0.0004 &  -0.0000  &  1.0000  & -0.0000 &   1.0000 \\
 X_{1}^{2} &   1.0000  &  -0.0005 &  -0.0004 &   1.0000 &  -0.0000 &   1.0000 &  -0.0005 &  -0.0004 &  -0.0005  & -0.0004 \\
 X_{1}X_{2} &  -0.0000  & -0.0004  & -0.0005 &  -0.0000 &   1.0000 &  -0.0000 &  -0.0004 &  -0.0005 &  -0.0004  & -0.0005 \\
 X_{2}^{2} &   1.0000  & -0.0005 &   -0.0004  &  1.0000  & -0.0000 &   1.0000  & -0.0005 &  -0.0004  & -0.0005 &  -0.0004 \\
X_{1}^{3} &   -0.0005  &  1.0000 &  -0.0000 &  -0.0005 &  -0.0004 &  -0.0005 &   1.0001 &  -0.0000 &   1.0001 &  -0.0000 \\
X_{1}^{2}X_{2} &  -0.0004  & -0.0000 &   1.0000 &  -0.0004 &  -0.0005 &  -0.0004 &  -0.0000  &  1.0001  & -0.0000 &   1.0001 \\
 X_{1}X_{2}^{2} &  -0.0005  &  1.0000 &  -0.0000 &  -0.0005 &  -0.0004 &  -0.0005 &   1.0001 &  -0.0000 &   1.0001 &  -0.0000 \\
 X_{2}^{3} &  -0.0004  &  -0.0000 &   1.0000 &  -0.0004 &  -0.0005 &  -0.0004 &  -0.0000 &   1.0001  & -0.0000 &   1.0001 \\
\end{block}
\end{blockarray}
			\end{equation}}
			
{\scriptsize	
\begin{equation}
 W_{M} = \left(\begin{array}{ccccc}
 1 & 0 & 1 & 0 & 1\\
 0 & 0 & 0 & 0 & 0\\
 0 & 0 & 0 & 0 & 0\\
 0 & 0 & 0 & 0 & 0\\
0 & 1 & 0 & 1 & 0\\
 0 & 0 & 0 & 0 & 0\\
 0 & 0 & 0 & 0 & 0\\
 0 & 0 & 0 & 0 & 0\\
 0 & 0 & 0 & 0 & 0\\
 0 & 0 & 0 & 0 & 0
\end{array}\right)
    \text{ and }
	C_{M}	=	\begin{blockarray}{cccccc}
	\text{ } & X_{1}^{4} & X_{1}^{3}X_{2} & X_{1}^{2}X_{2}^{2} & X_{1}X_{2}^{3} & X_{1}^{4} \\
\begin{block}{r(rrrrr)}
 X_{1}^{4} &   6.4115 &  -0.0000  &  2.0768 &  -0.0000 &   1.7719 \\
  X_{1}^{3}X_{2} & -0.0000  &  2.0768  & -0.0000 &   1.7719 &  -0.0000 \\
 X_{1}^{2}X_{2}^{2} &   2.0768  & -0.0000  &  1.7719 &  -0.0000  &  2.0768 \\
X_{1}X_{2}^{3} &   -0.0000  &  1.7719  &  -0.0000  &  2.0768 &  -0.0000 \\
 X_{2}^{4} &   1.7719  & -0.0000  &  2.0768 &  -0.0000 &   6.4115 \\
\end{block}
\end{blockarray}    	
	\end{equation}}
	
	In this case:
	{\scriptsize
\begin{align}
W^{*}A_{M}W = \left( \begin{array}{rrrrr}
    1.0000  & -0.0000 &   1.0000 &  -0.0000  &  1.0000 \\
   -0.0000  &  1.0000 &  -0.0000 &   1.0000  & -0.0000 \\
    1.0000  & -0.0000 &   1.0000 &  -0.0000  &  1.0000 \\
   -0.0000  &  1.0000 &  -0.0000 &   1.0000  & -0.0000 \\
    1.0000  &  -0.0000 &   1.0000 &  -0.0000 &   1.0000 
		\end{array}
		\right) \nonumber
\end{align}	}

 is a Hankel matrix, what  implies that $\widetilde{M}$ is generalized Hankel and since we are minimizing over a polyhedron defined by linear polynomials  by Theorem \ref{mpolyeder}  $P^{*}_{8}=P^{*}$.

\end{ej}

The goal of this paper is to find optimality conditions and extracting global minimizers from an optimal solution of the moment relaxation. That is to say given a polynomial optimization problem $(P)$ as in \eqref{Pop} and an optimal solution of the  moment relaxation $(P_{k})$ as in \ref{relax}, find conditions to conclude if the optimal value is also the optimal value of the original polynomial optimization problem, i.e. $P^{*}=P^{*}_{k}$ and in this case extracting global minimizers. In the first section we outline Lasserres approach \cite{las} to solve polynomial optimization problems with the language of linear forms, at the end of this section we will reformulate the problem of optimality, that is to say we reformulate the problem of finding a decomposition of the modified moment matrix as in \eqref{auxiliar} to the problem of finding a commutative truncated version of the Gelfand-Naimark-Segal construction for a linear form $L\in\R[\underline{X}]^{*}_{2d}$ with $d\in\N_{0}\cup\{\infty\}$, which take nonnegative values in $\sum\R[\underline{X}]_{d}^{2}$. The truncated GNS construction for this linear form will be defined in Section $4$ and at the end of this section we give a proof of the very useful result of Smul'jan \cite{Smu} using the inner product defined in the truncated GNS construction. In Section $5$ we prove that if the truncated GNS multiplication operators of the optimal solution  commute we are able to get the factorization \eqref{factorization1} or in other words we find a Gaussian quadrature rule \ref{gaussian} representation for the linear form. In this section we will also prove that the commutativity of the truncated GNS operators is a more general fact than the very well know flatness condition, that is the case $C_{M}=W_{M}^{T}A_{M}W_{M}$, but the reverse it does not always hold (see \eqref{segundoej},\ref{elprimero},\ref{rosenbrock},\eqref{cuartoej}, \eqref{nopuedomas} for examples), at the end of this section we review a result of Curto and Fialkow for the characterization of linear forms with quadrature rule on the whole space with minimal number of nodes. In Section $6$ we prove the main result, which is that the truncated GNS multiplication operators of $M$ commute if and only if $W_{M}^{T}A_{M}W_{M}$ is a Hankel matrix. This fact will help us to detect optimality in polynomial optimization problems and to slightly generalize some classical results of Dunkl, Xu ,Mysovskikh, Möller and Putinar \cite[Theorem 3.8.7]{xu},\cite{mysov,moller},\cite[pages 189-190]{Put} on Gaussian quadratue rules. with underlying ideas of \cite{Put}. In the last section we group all the results about optimality and global minimizers for an optimal solution of the moment relaxation, at the end we also give an algorithm for detecting the optimality and extracting minimizers with numerical examples.

\section{Formulation of the problem}
To solve polynomial optimization problems we use the very well known moment relaxations defined in \ref{relax}. An introduction in to moment relaxations can also be found for instance in: \cite{lau},\cite{las} and \cite{sch}. Likewise we will give the equivalent definition using linear forms instead of matrices in \ref{relax2}. We will now outline Lasserre's \cite{las} approach to solve this problem. This method constructs a hierarchy of semidefinite programming relaxations, which are generalization of linear programs, and possible to solve efficiently, see \cite{ig} and \cite{lau} for an introduction. In each relaxation of degree $k$  we build  convex set, obtained through the linearization of a equivalent polynomial optimization problem of $(P)$ defined in \eqref{Pop}. This equivalent formulation of the problem consists in adding infinitely many redundant inequalites of the form $p\geq 0$ for all  $p\in \sum\R[\underline{X}]^2p_{i}\cap\R[\underline{X}]_{k}$ (with the notation $\sum\R[\underline{X}]^2p_{i}$ we mean the set of all finite sums of elements of the form $p^{2}p_{i}$, for $p\in\R[\underline{X}]$). The set of this redundant inequalities builds a cone, which is a set containing 0, closed under addition and closed under multiplication for positive scalars. The cone generated for this redundant inequalities is called truncated quadratic module generated by the polynomials $p_{1},\ldots,p_{m}$, as we see in Definition \ref{dqm}. This relaxations give us an increasing sequence of lower bounds of the infimum $P^{*}$, as you can see in \ref{motiv}. Lasserre proved that this sequence converge asymptotically to the infimum if we assume some arquimedean property in the cone generetated for the redundant inequalities, see \cite[Theorem 5]{sch} for a proof.  

\begin{defi}\label{dqm}
Let $p_{1},\ldots,p_{m}\in\R[\underline{X}]$ and $k\in \N_{0}\cup\{\infty\}$. We define the $k$-truncated quadratic module $M$, generated by $p_{1},\ldots,p_{m}$ as:
\begin{equation}\label{qm}
\begin{aligned}
M_{k}(p_{1},\ldots,p_{m}):= & \R[\underline{X}]_{k}\cap\sum\R[\underline{X}]^{2}+ \R[\underline{X}]_{k}\cap\sum\R[\underline{X}]^{2}p_{1}\\
                           & +\cdots +\R[\underline{X}]_{k}\cap\sum\R[\underline{X}]^{2}p_{m} \subseteq\R[\underline{X}]_{k} \\
\end{aligned}
\end{equation}
where here $\R[\underline{X}]_{\infty}:=\R[\underline{X}]$. We use the notation $M(p_{1},\ldots,p_{m}):=M_{\infty}(p_{1},\ldots,p_{m})$, to refer to the quadratic module generated by the polynomials $p_{1},\ldots,p_{m}\in\R[\underline{X}]$.
\end{defi}

\begin{rem}\label{gram}
Note that:
\begin{equation}
\R[\underline{X}]_{k}\cap\sum\R[\underline{X}]^{2}p=\{\sum_{i=1}^{l}h_{i}^{2}p\text{ }|\text{ }h_{i}\in\R[\underline{X}],2\deg(h_{i})\leq k -\deg(p) \}\nonumber
\end{equation}
For a proof this see \cite[Page 5]{sch}.
\end{rem}

\begin{lem}\label{lecture}
Let $k\in\N$, $p\in\R[\underline{X}]_{k}\setminus\{0\}$ and $d:=\lfloor \frac{k-\deg(p)}{2} \rfloor$.
Let $L\in\R[\underline{X}]^{*}_{k}$. Then it holds:
\begin{equation}
L(\sum\R[\underline{X}]_{k}\cap\R[\underline{X}]^{2}p)\subseteq\R_{\geq 0}\iff M_{k,p}(y)\succeq 0
\end{equation}
\end{lem}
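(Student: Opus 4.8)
The plan is to unwind both sides of the claimed equivalence in terms of coordinates and the Gram-matrix description of the relevant cone. By Remark \ref{gram}, the cone $\R[\underline{X}]_{k}\cap\sum\R[\underline{X}]^{2}p$ consists exactly of the finite sums $\sum_{i} h_{i}^{2}p$ with $h_{i}\in\R[\underline{X}]$ and $2\deg(h_{i})\le k-\deg(p)$, i.e. $h_{i}\in\R[\underline{X}]_{d}$ with $d=\lfloor\frac{k-\deg(p)}{2}\rfloor$. Since $L$ is linear, $L(\sum_i h_i^2 p)=\sum_i L(h_i^2 p)$, so $L$ takes only nonnegative values on this cone if and only if $L(h^2 p)\ge 0$ for every single $h\in\R[\underline{X}]_{d}$. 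Thus the whole statement reduces to: $L(h^2 p)\ge 0$ for all $h\in\R[\underline{X}]_{d}$ $\iff$ $M_{k,p}(y)\succeq 0$, where $y=(y_\alpha)_{|\alpha|\le k}$ is the coordinate vector of $L$, namely $y_\alpha:=L(\underline{X}^\alpha)$.

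Next I would make the bilinear-form computation explicit. Write $h=\sum_{|\beta|\le d} c_\beta \underline{X}^\beta$, so $h=V_{d}(\underline{X})^T c$ with $c=(c_\beta)\in\R^{s_d}$ the coefficient vector in the monomial basis $V_d$. Then $h^2 p = c^{T}\bigl(p\, V_d V_d^{T}\bigr) c$ as an identity in $\R[\underline{X}]_k$, where $p\,V_d V_d^T$ is the polynomial matrix displayed in \eqref{localizing}. Applying the linear form $L$ entrywise and using that by definition $M_{k,p}(y)$ is obtained from \eqref{localizing} by replacing each monomial $\underline{X}^\alpha$ (with $|\alpha|\le k$) by $y_\alpha=L(\underline{X}^\alpha)$, we get $L(h^2 p)=c^{T} M_{k,p}(y)\, c$. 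Here one should note the degree bookkeeping: every entry of $p\,V_dV_d^T$ has degree at most $\deg(p)+2d\le k$, so all monomials appearing are in the domain $\R[\underline{X}]_k$ of $L$ and the substitution is well-defined — this is the only place where the exact value $d=\lfloor\frac{k-\deg(p)}{2}\rfloor$ is used, and it is worth stating carefully. Finally, since $M_{k,p}(y)$ is a fixed real symmetric $s_d\times s_d$ matrix and $c$ ranges over all of $\R^{s_d}$ as $h$ ranges over $\R[\underline{X}]_d$, the condition "$c^T M_{k,p}(y) c\ge 0$ for all $c\in\R^{s_d}$" is by definition exactly $M_{k,p}(y)\succeq 0$. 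Chaining the three equivalences gives the lemma.

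The only mildly delicate point — the "main obstacle", such as it is — is the verification that applying $L$ commutes with the matrix substitution, i.e. that $L\bigl(c^T (p V_d V_d^T) c\bigr) = c^T M_{k,p}(y) c$; this is just linearity of $L$ together with the degree bound ensuring no monomial of degree $>k$ is produced, but it deserves an explicit sentence since it is where the definitions of the localizing matrix and of $d_p$ interlock. Everything else is formal: the forward direction of the first equivalence is trivial (a sum of nonnegatives is nonnegative), and the backward direction uses only that each generator $h^2 p$ of the cone is itself of the tested form. No deeper structural input is needed.
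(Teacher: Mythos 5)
Your proposal is correct and follows essentially the same route as the paper's proof: reduce via Remark \ref{gram} and linearity of $L$ to the condition $L(h^{2}p)\geq 0$ for all $h\in\R[\underline{X}]_{d}$, pass to coefficient vectors $c$ with $h=V_{d}^{T}c$, and identify $L(h^{2}p)=c^{T}M_{k,p}(y)c$ (the paper phrases this via the decomposition $pV_{d}V_{d}^{T}=\sum_{|\alpha|\leq k}\underline{X}^{\alpha}A_{\alpha}$, which is the same computation). Your explicit remark on the degree bound $\deg(p)+2d\leq k$ making the substitution well-defined is a fine addition but not a deviation.
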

\begin{proof}
Let us set the matrices $A_{\alpha}\in\R^{s_{d}\times s_{d}}$ for $|\alpha|\leq k$, as the matrices such that:
\begin{equation}
 pV_dV_d^{T}=\sum_{|\alpha|\leq k}\underline{X}^{\alpha}A_{\alpha}\in\R[\underline{X}]^{s_{d}\times s_{d}}_{k}.\nonumber
\end{equation}
and  $y_{\alpha}:=L(\underline{X}^{\alpha})$ for $|\alpha|\leq k$.
\begin{align}
L(\sum\R[\underline{X}]_{k}&\cap\R[\underline{X}]^{2}p)\subseteq\R_{\geq 0}\stackrel {\ref{gram}}{\iff} \forall h\in\R[\underline{X}]_{d},\text{ }L(h^{2}p)\geq 0 \nonumber \\ \nonumber
&\iff \forall H\in\R^{s_{d}},\text{ } L((H^{T}V_{d})(V_{d}^{T}H)p)\geq 0 \nonumber \\ \nonumber
&\iff \forall H\in\R^{s_{d}},\text{ } L(H^{T}pV_{d}V_{d}^{T}H)\geq 0 \nonumber \\ \nonumber
&\iff \forall H\in\R^{s_{d}},\text{ } L(H^{T}(\sum_{|\alpha|\leq k}\underline{X}^{\alpha}A_{\alpha})H)\geq 0 \nonumber \\ \nonumber
&\iff \forall H\in\R^{s_{d}},\text{ } L(\sum_{|\alpha|\leq k}\underline{X}^{\alpha}H^{T}A_{\alpha}H)\geq 0 \nonumber \\ \nonumber
&\stackrel{\text{L is linear}}{\iff} \forall H\in\R^{s_{d}},\text{ } \sum_{|\alpha|\leq k}L(\underline{X}^{\alpha})H^{T}A_{\alpha}H\geq 0 \nonumber \\ \nonumber
&\iff \forall H\in\R^{s_{d}},\text{ } H^{T}(\sum_{\alpha}y_{\alpha}A_{\alpha})H\geq 0 \nonumber \\ \nonumber
&\iff \sum_{|\alpha|\leq k}y_{\alpha}A_{\alpha}\succeq 0 \iff M_{k,p}(y)\succeq 0
\end{align}
\end{proof}

Due to Lemma \ref{lecture} the following definition of moment relaxation using linear forms is equivalent to the definition given in \ref{relax}

\begin{defi}\label{relax2}
Let $(P)$ be a polynomial optimization problem given as in \ref{Pop} and 
let $k\in$ $\N_{0}\cup\{\infty\}$ such that $f,p_{1},\ldots, p_{m}\in\R[\underline{X}]_{k}$. The moment relaxation (or Lasserre relaxation) of $(P)$ of degree $k$ is the semidefinite optimization problem:
\begin{equation*}
\begin{aligned}
&(P_{k})
&&\text{ minimize }
&&& L(f)
&&&&\text{ subject to }
&&&&& L\in\R[\underline{X}]_{k}^{*}
&&&&&&
&&&&&&&\\
&
&&
&&&
&&&&
&&&&& L(1)=1
&&&&&& 
&&&&&&&\\
&
&&
&&&
&&&&
&&&&&  L(M_{k}(p_{1},..., p_{m}))\subseteq{\R_{\geq 0}}
&&&&&&
&&&&&&&\\
\end{aligned}
\end{equation*}
the optimal value of $(P_{k})$ i.e., the infimum over all $L(f)$ where $L$ ranges over all optimal solutions of $(P_{k})$ is denoted by $P^{*}_{k}\in\{-\infty\}\cup\R\cup\{\infty\} $.
\end{defi}

\begin{cyn}\label{traduccion}
Let $d\in\N_{0}$. The correspondence:
\begin{align}
L  \nonumber\mapsto & (L(\underline{X}^{\alpha+\beta}))_{|\alpha|,|\beta|\leq d}\\ \nonumber
\left(
\begin{array}{ccc}
\R[\underline{X}]_{2d} & \rightarrow & \R \\
\underline{X}^{\alpha} & \mapsto & y_{\alpha} \\
\end{array}\right) \mapsfrom   &  M_{d}(y)\\ \nonumber
\end{align}
defines a bijection between the linear forms $L\in\R[\underline{X}]^{*}_{2d}$ such that $L(\sum\R[\underline{X}]^{2}_{d}])\subseteq \R_{\geq 0}$ and the set of positive semidefinite generalized Hankel matrices of order $d$ i.e. $ H_{d}\cap\R_{\succeq 0}^{s_{d}\times s_{d}}$. Let $L\in\R[\underline{X}]^{*}_{2d}$ such that $L(\sum\R[\underline{X}]^{2}_{d}])\subseteq \R_{\geq 0}$ we denote $M_{L}:=(L(\underline{X}^{\alpha+\beta}))_{|\alpha|,|\beta|\leq d}$ and let $M_{d}(y)\succeq 0$ for $y\in\R^{s_{d}}$ we denote:
\begin{equation}
L_{M_{d}(y)}:\R[\underline{X}]_{2d}\longrightarrow\R,\underline{X}^{\alpha}\mapsto y_{\alpha}.\nonumber
\end{equation}
\end{cyn}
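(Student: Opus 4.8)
The plan is to show the two assignments are mutually inverse and that each lands in the claimed set, splitting the argument into four routine verifications. First I would check that the map $L \mapsto M_L := (L(\underline{X}^{\alpha+\beta}))_{|\alpha|,|\beta|\le d}$ sends a linear form with $L(\sum\R[\underline{X}]_d^2)\subseteq\R_{\ge 0}$ to a positive semidefinite generalized Hankel matrix of order $d$. That the matrix has the generalized Hankel shape of \eqref{generalizedHankel} is immediate from the definition: the $(\alpha,\beta)$ entry depends only on $\alpha+\beta$, since it equals $L(\underline{X}^{\alpha+\beta})$, so $M_L = M_d(y)$ where $y_\gamma := L(\underline{X}^\gamma)$. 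Positive semidefiniteness is exactly the content of Lemma \ref{lecture} applied with $p = 1$ and $k = 2d$ (equivalently, one unwinds $H^T M_L H = L((H^T V_d)^2)\ge 0$ for all $H\in\R^{s_d}$ directly from $L$ being nonnegative on squares $h^2$ with $h = H^T V_d \in \R[\underline{X}]_d$).

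Second I would check the reverse assignment: given $M_d(y)\succeq 0$, define $L_{M_d(y)}\colon \R[\underline{X}]_{2d}\to\R$ on the monomial basis by $\underline{X}^\alpha\mapsto y_\alpha$ and extend linearly; this is well-defined precisely because $\{\underline{X}^\alpha : |\alpha|\le 2d\}$ is a basis of $\R[\underline{X}]_{2d}$. Then for $h = H^T V_d\in\R[\underline{X}]_d$ one has $L_{M_d(y)}(h^2) = H^T M_d(y) H \ge 0$ by hypothesis, and since every element of $\sum\R[\underline{X}]_d^2$ is a finite sum of such squares and $L_{M_d(y)}$ is linear, $L_{M_d(y)}(\sum\R[\underline{X}]_d^2)\subseteq\R_{\ge 0}$. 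Hence this assignment lands in the correct set as well.

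Third, and finally, I would verify the two round-trips. Starting from $L$ and forming $M_L = M_d(y)$ with $y_\alpha = L(\underline{X}^\alpha)$, the associated form $L_{M_L}$ sends $\underline{X}^\alpha\mapsto y_\alpha = L(\underline{X}^\alpha)$, so it agrees with $L$ on a basis of $\R[\underline{X}]_{2d}$ and therefore $L_{M_L} = L$. Conversely, starting from $M_d(y)$ and forming $L := L_{M_d(y)}$, the matrix $M_L$ has $(\alpha,\beta)$ entry $L(\underline{X}^{\alpha+\beta}) = y_{\alpha+\beta}$, which is exactly the $(\alpha,\beta)$ entry of $M_d(y)$; hence $M_L = M_d(y)$. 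This establishes the bijection, and the notation $M_L$, $L_{M_d(y)}$ is then justified.

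There is no real obstacle here — the statement is essentially a bookkeeping identity between linear forms on $\R[\underline{X}]_{2d}$ and symmetric matrices indexed by monomials of degree $\le d$, and the only substantive input, the equivalence between nonnegativity on $\sum\R[\underline{X}]_d^2$ and positive semidefiniteness of the moment matrix, has already been packaged as Lemma \ref{lecture}. The one point deserving a word of care is making explicit that the generalized Hankel structure is automatic (the entry at position $(\alpha,\beta)$ genuinely depends only on $\alpha+\beta$), so that the image of the first map really lies in $H_d$ and not merely in the symmetric matrices; everything else is linear-algebra routine.
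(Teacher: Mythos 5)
Your proposal is correct and follows essentially the same route as the paper: both directions of well-definedness come from Lemma \ref{lecture} (the case $p=1$, $k=2d$), and the bijection is then verified by the two round-trip identities $L_{M_L}=L$ and $M_{L_{M_d(y)}}=M_d(y)$. Your write-up is just a more explicit unpacking (e.g.\ noting that the $(\alpha,\beta)$ entry depends only on $\alpha+\beta$, so the image lies in $H_d$), which the paper leaves implicit.
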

\begin{proof}
The well-definedness of both maps follows from Lemma \ref{lecture}. Now, let  $L \in\R[\underline{X}]^{*}_{2d}$ such that $L(\sum\R[\underline{X}]^{2}_{d})\subseteq \R_{\geq 0}$ then:
\begin{equation}
L_{M_{L}}:\R[\underline{X}]_{2d}\longrightarrow \R,\underline{X}^{\alpha}\mapsto L(\underline{X}^{\alpha})\nonumber
\end{equation}
since notice that $M_{L}=(L(\underline{X}^{\alpha+\beta}))_{|\alpha|,|\beta|\leq d}=M_{d}(L(0),\ldots,L(X_{n}^{2d}))$. Hence $L_{M_{L}}=L$. On the other side, let $M_{d}(y)\succeq 0$ for $y\in\R^{s_{d}}$ then:
\begin{equation}
M_{L_{M_{d}(y)}}=(L_{M_{d}(y)}(\underline{X}^{\alpha+\beta}))_{|\alpha|,|\beta|\leq d}=(y_{\alpha+\beta})_{|\alpha|,|\beta|\leq d}=M_{d}(y).\nonumber
\end{equation}
\end{proof}

\begin{notation}
We denote the following isomorphism of vector spaces by:
\begin{align}
\poly:\R^{s_{d}}\longrightarrow\R[\underline{X}]_{d}\nonumber,a\mapsto a^{T}V_{d}
\end{align}
\end{notation}

\begin{prop}\label{multiplicacion}
Let $d\in\N_{0}$ and $L\in\R[\underline{X}]_{2d}^{*}$ then:
\begin{equation}
L(pq)=P^{T}M_{L}Q \nonumber
\end{equation}
where $P:=\poly^{-1}(p)$ and $Q:=\poly^{-1}(q)$.
\end{prop}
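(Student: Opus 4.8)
The plan is to expand both sides in the monomial basis and invoke linearity of $L$; no deeper idea is needed. Recall that $\poly(a)=a^{T}V_{d}$, so writing $P:=\poly^{-1}(p)$ and $Q:=\poly^{-1}(q)$ amounts to writing $p=P^{T}V_{d}=\sum_{|\alpha|\leq d}P_{\alpha}\underline{X}^{\alpha}$ and $q=Q^{T}V_{d}=\sum_{|\beta|\leq d}Q_{\beta}\underline{X}^{\beta}$, where the entries of $P,Q\in\R^{s_{d}}$ are indexed by the exponents $\alpha$ with $|\alpha|\leq d$ in the order fixed by the vector $V_{d}$. Since $\deg p\leq d$ and $\deg q\leq d$, the product $pq$ lies in $\R[\underline{X}]_{2d}$, so $L(pq)$ is well defined, and
\[
pq=\Bigl(\sum_{|\alpha|\leq d}P_{\alpha}\underline{X}^{\alpha}\Bigr)\Bigl(\sum_{|\beta|\leq d}Q_{\beta}\underline{X}^{\beta}\Bigr)=\sum_{|\alpha|\leq d,\ |\beta|\leq d}P_{\alpha}Q_{\beta}\,\underline{X}^{\alpha+\beta}.
\]

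Applying $L$ and using its linearity, together with the fact that by definition the $(\alpha,\beta)$ entry of $M_{L}$ is $L(\underline{X}^{\alpha+\beta})$, I would conclude
\[
L(pq)=\sum_{|\alpha|\leq d,\ |\beta|\leq d}P_{\alpha}Q_{\beta}\,L(\underline{X}^{\alpha+\beta})=\sum_{|\alpha|\leq d,\ |\beta|\leq d}P_{\alpha}\,(M_{L})_{\alpha,\beta}\,Q_{\beta}=P^{T}M_{L}Q,
\]
the last equality being just the definition of the bilinear form attached to the matrix $M_{L}$.

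There is essentially no obstacle here; the only point demanding a little care is the bookkeeping of indices: one must ensure that the ordering of monomials used to form the coordinate vectors $P,Q$ via $\poly$ is the very same ordering used to index the rows and columns of $M_{L}$ (in both cases the order in $V_{d}$), so that $(M_{L})_{\alpha,\beta}=L(\underline{X}^{\alpha+\beta})$ holds for exactly the indices $\alpha,\beta$ appearing in $P$ and $Q$. I would also remark in passing that the identity is valid for an arbitrary $L\in\R[\underline{X}]_{2d}^{*}$ — positive semidefiniteness of $M_{L}$ plays no role — since the matrix $M_{L}$ is built solely from the values $L(\underline{X}^{\gamma})$ with $|\gamma|\leq 2d$.
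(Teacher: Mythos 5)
Your proof is correct and follows essentially the same route as the paper: expand $p$ and $q$ in the monomial basis and use linearity of $L$, the only difference being that you carry out the computation entrywise with $(M_{L})_{\alpha,\beta}=L(\underline{X}^{\alpha+\beta})$, while the paper packages the same bookkeeping via the coefficient matrices $A_{\alpha}$ in $V_{d}V_{d}^{T}=\sum_{\alpha}\underline{X}^{\alpha}A_{\alpha}$. Your added remarks (well-definedness of $L(pq)$, consistency of the monomial ordering, and that positive semidefiniteness is irrelevant) are accurate and harmless.
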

\begin{proof}
As usual let us set the matrices  $A_{\alpha}\in\R^{s_{d}\times s_{d}}$ for $|\alpha|\leq 2d$ as the matrices such that:
\begin{equation}
 V_dV_d^{T}=\sum_{|\alpha|\leq k}\underline{X}^{\alpha}A_{\alpha}\in\R[\underline{X}]^{s_{d}\times s_{d}}_{k}.\nonumber
\end{equation}
Then:
\begin{align}
L(pq)=L(P^{T}V_{d}V_{d}^{T}Q)\nonumber=L(P^{T}\sum_{|\alpha|\leq 2d}\underline{X}^{\alpha}&A_{\alpha}Q)=  \\ \nonumber
\sum_{|\alpha|\leq 2d}L(\underline{X}^{\alpha})P^{T}A_{\alpha}Q= P^{T}(\sum_{|\alpha|\leq 2d}L(\underline{X}^{\alpha})A_{\alpha})Q &=P^{T}M_{L}Q \nonumber
\end{align}
\end{proof}

\begin{defi}
Let $L\in\R[\underline{X}]^{*}_{d}$ . A quadrature rule for $L$ on $U\subseteq\R[\underline{X}]_{d}$ is a function $w:N\to\R_{>0}$ defined on a finite set $N\subseteq\R^{n}$, such that:
\begin{equation}
L(p)= \sum\limits_{x\in N} w(x)p(x)
\end{equation}
for all $p\in U$. A quadrature rule for $L$ is a quadrature for $L$ on $\R[\underline{X}]_{d}$. We call the elements of $N$ the nodes of the quadrature rule. 
\end{defi}

\begin{prop}\label{motiv}
Let $(P)$ be the polynomial optimization problem given in \eqref{Pop} with $f,p_{1},\ldots,p_{m}\in\R[\underline{X}]_{k}$. Then the following holds:
\begin{enumerate}[label=(\roman*)]
\item $P^{*}\geq P_{\infty}^{*}\geq\cdots\geq P^{*}_{k+1} \geq P^{*}_{k}$.
\item Let $L\in\R[\underline{X}]^{*}_{k}$ with $L(1)=1$. Suppose $L$ has a quadrature rule with nodes in $S$, then $L$ is a feasible solution of $(P_{k})$ with $L(f)\geq P^{*}$.
\item Suppose $(P_{k})$ has an optimal solution $L^{*}$, which has a quadrature rule on $\R[\underline{X}]_{l}$ for some $l\in\{1,\ldots,k\}$ with $f\in\R[\underline{X}]_{l}$ and the nodes are in $S$. Then $L^{*}(f)=P^{*}$, moreover we have $P^{*}=P^{*}_{k+m}$ for $m\geq 0$ and the nodes of the quadrature rule are global minimizers of $(P)$.
\item In the situation of (iii), suppose moreover that $(P)$ has an unique global minimizer $x^{*}$, then $L^{*}(f)=f(x^{*})$ and  $x^{*}=(L^{*}(X_{1}),\ldots,L^{*}(X_{n}))$.
\end{enumerate}
\end{prop}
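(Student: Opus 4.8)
The plan is to prove the four items in order, since each later item leans on the earlier ones. For (i), I would first observe that for $k \le k'$ any feasible $L \in \R[\underline{X}]^{*}_{k'}$ of $(P_{k'})$ restricts to a feasible solution $L' := L_{|\R[\underline{X}]_{k}}$ of $(P_{k})$: the condition $L(1)=1$ is inherited, and $M_{k}(p_{1},\ldots,p_{m}) \subseteq M_{k'}(p_{1},\ldots,p_{m})$ together with $L(f)=L'(f)$ (using $f\in\R[\underline{X}]_{k}$) gives $P^{*}_{k'}\ge P^{*}_{k}$. For the comparison with $P^{*}$ one checks that if $S\ne\emptyset$ and $a\in S$, then $\ev_{a}$ (suitably restricted) is feasible for every $(P_{k})$ with value $f(a)$, because every element of $M_{k}(p_{1},\ldots,p_{m})$ is, by Remark \ref{gram}, a sum of terms $h^{2}p_{i}$ (or $h^{2}$) which are nonnegative at points of $S$; taking the infimum over $a\in S$ yields $P^{*}\ge P^{*}_{\infty}$, and the chain $P^{*}_{\infty}\ge\cdots\ge P^{*}_{k}$ follows from the restriction argument with $k'=\infty$.

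For (ii): given a quadrature rule $w:N\to\R_{>0}$ for $L$ with $N\subseteq S$, I would write $L(p)=\sum_{x\in N}w(x)p(x)$ and verify the constraints of $(P_{k})$ directly. Since $L(1)=\sum_{x\in N}w(x)=1$ is assumed, and for $q\in M_{k}(p_{1},\ldots,p_{m})$ we have $q(x)\ge 0$ for all $x\in S\supseteq N$ (again via Remark \ref{gram}, as in (i)), positivity of the weights forces $L(q)=\sum_{x\in N}w(x)q(x)\ge 0$; hence $L$ is feasible. Feasibility plus (i) gives $L(f)\ge P^{*}_{k}$, but in fact $L(f)=\sum_{x\in N}w(x)f(x)\ge \sum_{x\in N}w(x)P^{*} = P^{*}$ since each $x\in N\subseteq S$ satisfies $f(x)\ge P^{*}$.

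For (iii): apply (ii) to $L^{*}$, noting that a quadrature rule on $\R[\underline{X}]_{l}$ with $f\in\R[\underline{X}]_{l}$ still computes $L^{*}(f)=\sum_{x\in N}w(x)f(x)$. This gives $L^{*}(f)\ge P^{*}$. But $L^{*}$ is optimal for $(P_{k})$, so $L^{*}(f)=P^{*}_{k}\le P^{*}$ by (i), whence $L^{*}(f)=P^{*}=P^{*}_{k}$. Since (i) sandwiches all higher relaxations between $P^{*}_{k}$ and $P^{*}$, we get $P^{*}=P^{*}_{k+m}$ for all $m\ge 0$. Finally, from $P^{*}=\sum_{x\in N}w(x)f(x)$ with $w(x)>0$, $\sum w(x)=1$, and $f(x)\ge P^{*}$ for each $x\in N$, each term must satisfy $f(x)=P^{*}$, i.e. every node is a global minimizer. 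For (iv): if the global minimizer $x^{*}$ is unique, the previous paragraph forces $N=\{x^{*}\}$ and hence $w(x^{*})=1$, so $L^{*}(f)=f(x^{*})$ and $L^{*}(X_{i})=w(x^{*})X_{i}(x^{*})=x^{*}_{i}$ for each $i$, which is the claimed identity $x^{*}=(L^{*}(X_{1}),\ldots,L^{*}(X_{n}))$.

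The routine parts are the restriction/feasibility bookkeeping; the only place demanding care is the repeated appeal to ``$q\in M_{k}(p_{1},\ldots,p_{m}) \Rightarrow q\ge 0$ on $S$,'' which I would state once as a lemma-style remark (immediate from Remark \ref{gram} and $p_{i}\ge 0$ on $S$) and then reuse. The main conceptual obstacle — such as it is — is item (iii): one must be careful that optimality of $L^{*}$ is what upgrades the inequality $L^{*}(f)\ge P^{*}$ to an equality, and that the degree bookkeeping ($l\le k$, $f\in\R[\underline{X}]_{l}$) is exactly what makes the quadrature rule applicable to $f$; no deeper machinery is needed.
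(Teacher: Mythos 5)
Your proposal is correct and follows essentially the same route as the paper: restriction of feasible linear forms for the chain in (i) together with $\ev_a$ for $a\in S$, direct verification of feasibility and the weighted-average inequality in (ii), the optimality sandwich $P^{*}_{k}=L^{*}(f)\geq P^{*}\geq P^{*}_{k}$ plus the positive-weight argument forcing $f(a_{i})=P^{*}$ in (iii), and uniqueness collapsing the node set in (iv). The only difference is that you spell out more explicitly (via Remark \ref{gram}) why elements of $M_{k}(p_{1},\ldots,p_{m})$ are nonnegative on $S$, which the paper leaves implicit.
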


\begin{proof}

(i) $P^{*}\geq P^{*}_{\infty}$ since if $x$ is a feasible solution for $(P)$ then $\ev_{x}\in\R[\underline{X}]^{*}$ is a feasible solution for $P_{\infty}$ with the same value, that is $f(x)=\ev_{x}(f)$. 
It remains to prove $P^{*}_{l}\geq P^{*}_{k}$ for $l\in\N_{\geq k}\cup\{ \infty \}$. For this let $L$ be  a feasible solution of $(P_{l})$, as $M_{k}(p_{1},\ldots,p_{m})\subseteq M_{l}(p_{1},\ldots,p_{m})$ 
then $L_{|\R[\underline{X}]_{k}}$ is a feasible solution of $(P_{k})$ with the same optimal value.\\
(ii) Suppose $L$ has a quadrature rule with nodes $a_{1},\ldots,a_{N}\in S$ and weights $\lambda_{1}>0,\ldots,\lambda_{N}>0$. From $L(1)=1$ we get $\sum_{i=1}^{N}\lambda_{i}=1$ and since the nodes are in $S$ it holds $L(M_{k}(p_{1},\ldots,p_{m}))\subseteq \R_{\geq 0}$. Hence $L$ is a feasible solution of $(P_{k})$. Moreover the following holds:
\begin{equation}
P^{*}=L(1)P^{*}=\sum_{i=1}^{N}\lambda_{i}P^{*}\leq\sum_{i=1}^{N}\lambda_{i}f(a_{i})=L(f)\nonumber
\end{equation}
where the inequality follows from the fact that $P^{*}\leq f(x)$ for all $x\in S$.\\
(iii) Suppose $L^{*}$ is an optimal solution of $(P_{k})$ then $L^{*}(f)=P^{*}_{k}\leq P^{*}$ using (i) and on other side since $L^{*}(1)=1$ and $L^{*}$ has a quadrature rule on $\R[\underline{X}]_{l}$ with nodes in $S$ and $f\in\R[\underline{X}]_{l}$, there exist $a_{1},\ldots,a_{N}\in S$ nodes, and $\lambda_{1}>0,\ldots,\lambda_{N}>0$ weights, such that:
\begin{equation}\label{comun}
P^{*}_{k}=L^{*}(f)=\sum_{i=1}^{N}\lambda_{i}f(a_{i})\geq\sum_{i=1}^{N}\lambda_{i}P^{*}=P^{*}
\end{equation}

Therefore $L^{*}(f)=P^{*}$, and  since $P^{*}_{k}=P^{*}$ we get equality everywhere in (i) and we can conclude  that $P^{*}=P^{*}_{k+m}$ for $m\geq 0$. It remains to show that the nodes are global minimimizers of $(P)$, but this is true since in \eqref{comun} we have equality everywhere, and if we factor out we get $\sum_{i=1}^{N}\lambda_{i}(f(a_{i})-P^{*})=0$, as $\lambda_{i}>0$ and $f(a_{i})-P^{*}\geq 0$ for all $i\in\{1,\ldots,N\}$, implying $f(a_{i})=P^{*}$ for all $i\in\{1,\ldots,N\}$.\\

(iv) Using  (iii) we have that $L^{*}(f)=P^{*}=f(x^{*})$, and continuing with the same notation as in the proof of (iii) we got by unicity of the minimizer $x^{*}$, that $a_{i}=x^{*}$ for all $i\in\{1,\ldots,N\}$.
This implies that $L^{*}=\ev_{x^{*}}\text{ on }\R[\underline{X}]^{*}_{l}$, and evaluating in the polinomials $X_{1},\ldots,X_{N}\in\R[\underline{X}]_{1}$ we got that:
\begin{center}
$L^{*}(X_{i})=\ev_{x^{*}}(X_{i})=x_{i}^{*}$  for all $i\in\{1,\ldots,N\}$.
\end{center}
That is to say, $x^{*}=(L^{*}(X_{1}),\ldots,L^{*}(X_{n}))$.
\end{proof}


We can now reformulate our problem as:\\
\\
Given $d\in\N_{0}$ and $L\in\R[\underline{X}]^{*}_{2d+2}$ such that $L(\sum\R[\underline{X}]^{2}_{d+1})\subseteq \R_{\geq 0}$, we would like to obtain for all $p\in\R[\underline{X}]_{2d+2}$:

\begin{itemize}
\item Nodes $x_{1},\ldots,x_{r}\in\R^{n}$ and  and weights $\lambda_{1},\ldots,\lambda_{r}>0$ such that:
\begin{equation*}
L(p)=\sum_{i=1}^{r}\lambda_{i}p(x_{i})
\end{equation*}
\end{itemize}

in other words:

\begin{itemize}
\item $x_{1,1},\ldots,x_{1,n},\ldots,x_{r,1},\ldots,x_{r,n}\in\R$ and $a_{1},\ldots,a_{r}\in\R$ such that:
\begin{equation*}
L(p)=\sum_{i=1}^{r}a_{i}^{2}p(x_{i,1},\ldots,x_{i,n})
\end{equation*}
\end{itemize}

again with other words:
\begin{itemize}
\item $x_{1,1},\ldots,x_{1,n},\ldots,x_{r,1},\ldots,x_{r,n}\in\R$ and $a_{1},\ldots,a_{r}\in\R$ such that:
\begin{equation*}
\begin{aligned}
&L(p)=&&\Bigg\langle{\left(
\begin{array}{lll}
   p(x_{1,1},\ldots,x_{1,n})  &   &   \\
      & \ddots   &    \\
     &   &  p(x_{r,1},\ldots,x_{r,n}) \\

    \end{array}
   \right)\left(
 \begin{array}{l}
 a_{1}\\
 \vdots\\
 a_{n}\\    
 \end{array} 
 \right), \left(
 \begin{array}{l}
 a_{1}\\
 \vdots\\
 a_{n}\\    
 \end{array} 
\right)
    } \Bigg \rangle\\
\end{aligned}    
\end{equation*}
\end{itemize}

again written differently:
\begin{itemize}
\item $x_{1,1},\ldots,x_{1,n},\ldots,x_{r,1},\ldots,x_{r,n}\in\R$ and $a\in\R^{r}$ such that:

\end{itemize}

\begin{equation*}
\begin{aligned}
&L(p)=&&\left\langle{p\left[
\begin{pmatrix}
   x_{1,1}  &   &   \\
      & \ddots   &    \\
     &   & x_{r,1}
    \end{pmatrix},...,
    \begin{pmatrix}
   x_{1,n}  &   &   \\
      & \ddots   &    \\
     &   &  x_{r,n}
    \end{pmatrix}
   \right]
   \left(
 \begin{array}{l}
 a_{1}\\
 \vdots\\
 a_{n}\\    
 \end{array} 
 \right), \left(
 \begin{array}{l}
 a_{1}\\
 \vdots\\
 a_{n}\\    
 \end{array} 
\right)
    } \right\rangle\\
\end{aligned}    
\end{equation*}

again with less words:
\begin{itemize}
\item Diagonal matrices $D_{1},\ldots,D_{n}\in\R^{r\times r}$ and $a\in\R^{n}$ such that:
\begin{equation*}
L(p)=\langle p(D_{1},\ldots,D_{n})a,a\rangle
\end{equation*}
\end{itemize}

\begin{Reminder}\label{st}
Let $r,n\in\N$ and $M_{1},\ldots,M_{n}\in\R^{r \times r}$ symmetric commuting matrices. Then there exist an orthogonal matrix $P\in\R^{r\times r}$ such that $P^{t}M_{i}P$ is a diagonal matrix for all $ i\in\{1,\ldots,n\}$.
\end{Reminder}

Using this theorem  we can continue with our reformulation of the problem: given $d\in\N_{0}$ and  $L\in\R[\underline{X}]^{*}_{2d+2}$ such that $L(\sum\R[\underline{X}]^{2}_{d+1})\subseteq \R_{\geq 0}$, to find a quadrature rule for $L$ is the same as to find commuting symmetric matrices $M_{1},\ldots,M_{n}\in\R^{r\times r}$ and a vector $a\in\R^{r}$ such that:
\begin{equation}\label{newform}
L(p)=\langle p(M_{1},\ldots,M_{n})a,a\rangle
\end{equation}

We end the reformulation of the problem once and for all with the languages of endomorphisms, instead of matrices. That is to say: given $d\in\N_{0}$ and $L\in\R[\underline{X}]^{*}_{2d+2}$ such that $L(\sum\R[\underline{X}]^{2}_{d+1})\subseteq \R_{\geq 0}$, we would like to obtain a finite dimensional euclidean vector space $V$, commuting self-adjoint endomorphisms $M_{1},\ldots,M_{n}$ of $V$ and $a\in V$ such that:
\begin{equation}\label{newformope}
L(p)=\langle p(M_{1},\ldots,M_{n})a,a\rangle
\end{equation}

\begin{rem}\label{original}
Gelfand, Naimark and Segal gave a solution for the case we allow to the space $V$ to be infinite dimensional and the linear form to be stricly positive in the sums of squares, that is to say, in the case we are given a linear form $L\in\R[\underline{X}]^{*}$ such that $L(p^{2})>0$ for all $p\neq 0$. The solution was given by defining the inner product:
\begin{equation}\label{inner}
\langle p,q \rangle:=L(pq)
\end{equation}
and defining the self adjoint operators $M_{i}$, for all $i\in\{1,\ldots,n\}$, on the infinite dimensional vector  space $\R[\underline{X}]$,  in the following way:
\begin{center}
$M_{i}:\R[\underline{X}]\longrightarrow\R[\underline{X}]$, $p\mapsto X_{i}p$
\end{center}

Taking $a:=1\in\R[\underline{X}]$ we have the searched equality \eqref{newformope}.
\end{rem}

\fbox{\begin{minipage}{40em}
From now on we will assume we are given a linear form $L\in\R[\underline{X}]^{*}_{2d+2}$ for $d\in\N _{0}\ \cup \ \{\infty\}$ such that $L(\sum\R[\underline{X}]^{2}_{d+1})\subseteq\R_{\geq 0}$ or what is the same due to \ref{traduccion} and \ref{lecture} $M_{L}$ is positive semidefinite, unless $L$ is defined explicitely in other way.
\end{minipage}}

\section{truncated GNS-construction}
In this section we will explain how we can define the euclidean vector space and multiplications operators required in \eqref{newformope} from this positive semidefinite linear form $L$, in a similar way as in the Gelfand-Neimark-Segal construction \ref{original}.\\

First, we will get rid of the problem that, $L(p^{2})=0$ does not imply $p=0$ for every $p\in\R[\underline{X}]_{d+1}$, that is to say \eqref{inner} does not define an inner product if the linear form is positive semidefinite. By grouping together the polynomials with this property we will be able to define an inner product, on a quotient space. As a consequence, we will obtain an euclidean vector space. With respect to the multiplication operators, we will need to do the orthogonal projection on the class of polynomials with one degree less, in such a way that when we do the multiplication for the variable $X_{i}$ we are not out of our ambient space. This construction was already done in \cite{Put}.

\begin{dyn}
We define and denote the truncated GNS kernel of $L$:
\begin{equation}
U_{L}:=\{ p\in\R[\underline{X}]_{d+1}\text{ }|\text{ }L(pq)=0 \text{ for all } q\in\R[\underline{X}]_{d+1}   \}\nonumber
\end{equation}
\end{dyn}

\begin{prop}\label{nucleo0}
The truncated GNS kernel of $L$ is a vector subspace in $\R[\underline{X}]_{d+1}$. Moreover:
\begin{equation}\label{nucleo}
U_{L}=\{ p\in\R[\underline{X}]_{d+1}\text{ }|\text{ }L(p^{2})=0 \}
\end{equation}
\end{prop}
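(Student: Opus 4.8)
The plan is to treat the three assertions in increasing order of difficulty: that $U_{L}$ is a subspace, the easy inclusion "$\subseteq$" in \eqref{nucleo}, and finally the reverse inclusion "$\supseteq$", which is where the positivity hypothesis really enters.

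First I would record that $U_{L}$ is a vector subspace. This is immediate from linearity of $L$: if $p_{1},p_{2}\in U_{L}$ and $\lambda,\mu\in\R$, then for every $q\in\R[\underline{X}]_{d+1}$ the product $(\lambda p_{1}+\mu p_{2})q$ lies in $\R[\underline{X}]_{2d+2}$ (hence in the domain of $L$) and $L((\lambda p_{1}+\mu p_{2})q)=\lambda L(p_{1}q)+\mu L(p_{2}q)=0$; also $0\in U_{L}$. The inclusion $U_{L}\subseteq\{p\in\R[\underline{X}]_{d+1}\mid L(p^{2})=0\}$ is equally routine: for $p\in U_{L}$ just specialize the defining condition to $q=p$.

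The substance is the reverse inclusion, and this is the step I expect to be the main obstacle. Suppose $p\in\R[\underline{X}]_{d+1}$ with $L(p^{2})=0$; I must show $L(pq)=0$ for all $q\in\R[\underline{X}]_{d+1}$. The key input is the standing hypothesis that $L(h^{2})\ge 0$ for every $h\in\R[\underline{X}]_{d+1}$ (equivalently, by \ref{traduccion} and \ref{lecture}, that $M_{L}\succeq 0$). Fixing $q$ and applying this to $h=p+tq\in\R[\underline{X}]_{d+1}$ for arbitrary $t\in\R$ gives
\[
0\ \le\ L\big((p+tq)^{2}\big)\ =\ L(p^{2})+2t\,L(pq)+t^{2}\,L(q^{2})\ =\ 2t\,L(pq)+t^{2}\,L(q^{2}).
\]
If $L(pq)\neq 0$, then choosing $t$ of small absolute value and of sign opposite to $L(pq)$ makes the linear term dominate and the right-hand side strictly negative, a contradiction; hence $L(pq)=0$ and $p\in U_{L}$. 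This is precisely the Cauchy--Schwarz phenomenon for the (possibly degenerate) bilinear form $(p,q)\mapsto L(pq)$.

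Alternatively, I could argue with matrices: writing $P:=\poly^{-1}(p)$ and $Q:=\poly^{-1}(q)$, Proposition \ref{multiplicacion} applied with $d+1$ in place of $d$ yields $L(p^{2})=P^{T}M_{L}P$ and $L(pq)=P^{T}M_{L}Q$; since $M_{L}\succeq 0$, by \ref{mpsd} we may write $M_{L}=B^{T}B$, so $0=L(p^{2})=\lVert BP\rVert^{2}$ forces $BP=0$, whence $M_{L}P=B^{T}BP=0$ and $L(pq)=(M_{L}P)^{T}Q=0$. Either way, the only genuine obstacle is this last inclusion, and it is exactly the point where positive semidefiniteness is indispensable: the subspace claim and the easy containment hold for any linear form, but \eqref{nucleo} fails without the positivity assumption.
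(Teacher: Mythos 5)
Your proposal is correct and follows essentially the same route as the paper: the paper also establishes the reverse inclusion by expanding $0\le L((p+tq)^{2})=L(p^{2})+2tL(pq)+t^{2}L(q^{2})$ and exploiting the Cauchy--Schwarz phenomenon (it states the inequality $L(pq)^{2}\le L(p^{2})L(q^{2})$ via the discriminant, while you argue directly with small $t$ of opposite sign, which amounts to the same thing). Your secondary matrix argument via $M_{L}=B^{T}B$ is a valid alternative but not needed.
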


\begin{proof}
The fact that $U_{L}$ is a vector subspace follows directly from the linearity of $L$. Let us prove the equality \eqref{nucleo}. For this let us denote $A:=\{ p\in\R[\underline{X}]_{d+1}\text{ }|\text{ }L(p^{2})=0 \}$. The inclusion $U_{L}\subseteq A$ is trivial. For the other inclusion we will demonstrate first, due to $L$ is positive semidefinite and linear, that the Cauchy-Schwarz inequality holds:
\begin{equation}\label{cauchy}
L(pq)^{2}\leq L(p^{2})L(q^{2})
\end{equation}
Indeed, for all $t\in\R$ and $p,q\in\R[\underline{X}]_{d+1}$ it holds:
\begin{equation*}
0\leq L((p+tq)^{2})=L(p^{2})+2tL(pq)+t^{2}L(q^{2})
\end{equation*}
Therefore the polynomial $r:=L(p^{2})+2XL(pq)+X^{2}L(q^{2})\in\R[X]_{2}$ is non negative, i.e. $r(x)\geq 0$ for all $x\in\R$. In the case $L(q^{2})\neq 0$, the discriminant of  $r$ has to be less or equal to cero i.e. $4L(pq)^{2}-4L(p^{2}L(q^{2}))\leq 0$ and we get the searched inequality \eqref{cauchy}. In the case $L(q^{2})=0$, then $L(pq)=0$ and trivially we get also the inequality \eqref{cauchy}. As a consequence if $p\in A$ then $L(p^{2})=0$, and this implies due to \eqref{cauchy}, $L(pq)=0$ for all $q\in\R[\underline{X}]_{d}$, and therefore $p\in U_{L}$. 
\end{proof}

\begin{dyn}
We define and denote the GNS representation space of $L$, as the following quotient of vector spaces:
\begin{equation}
V_{L}:=\frac{\R[\underline{X}]_{d+1}}{U_{L}} 
\end{equation}
For every $p\in\R[\underline{X}]_{d+1}$ we will write $\overline{p}^{L}$ to refer us to the class of $p$ in $V_{L}$.
We define and denote the GNS inner product of $L$, in the following way:
\begin{equation}
\langle \overline{p}^{L},\overline{q}^{L}\rangle_{L}:=L(pq) 
\end{equation}
for every $p,q\in\R[\underline{X}]_{d+1}$.
\end{dyn}

\begin{prop}
$(V_{L},\langle\text{ . } ,\text{ . } \rangle_{L})$, is a symmetric bilinear form:
\end{prop}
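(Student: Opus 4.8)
The plan is to check the three properties that the statement asks for: that the rule $(\overline{p}^{L},\overline{q}^{L})\mapsto L(pq)$ on the quotient $V_{L}=\R[\underline{X}]_{d+1}/U_{L}$ is (i) well-defined, i.e. independent of the chosen representatives $p$ and $q$; (ii) symmetric; and (iii) linear in each argument. Since $p,q\in\R[\underline{X}]_{d+1}$ forces $pq\in\R[\underline{X}]_{2d+2}$ and $L\in\R[\underline{X}]^{*}_{2d+2}$, the scalar $L(pq)$ is always defined, so the only point with real content is well-definedness; symmetry and bilinearity will be formal.

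First I would treat well-definedness. Suppose $\overline{p}^{L}=\overline{p'}^{L}$, i.e. $p-p'\in U_{L}$. By the very definition of the truncated GNS kernel $U_{L}$, this means $L((p-p')q)=0$ for every $q\in\R[\underline{X}]_{d+1}$, so by linearity of $L$ we get $L(pq)=L(p'q)$ for all such $q$. Running the same argument in the second slot (or simply invoking symmetry, proved next) shows that the value $L(pq)$ depends only on the classes $\overline{p}^{L}$ and $\overline{q}^{L}$. I would remark that one could equally well argue via the characterization of $U_{L}$ in Proposition \ref{nucleo0} together with the Cauchy--Schwarz inequality \eqref{cauchy}, but the bare definition of $U_{L}$ already suffices.

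Symmetry is then immediate from commutativity of multiplication in $\R[\underline{X}]$: $\langle\overline{p}^{L},\overline{q}^{L}\rangle_{L}=L(pq)=L(qp)=\langle\overline{q}^{L},\overline{p}^{L}\rangle_{L}$. For bilinearity I would use that the quotient vector space operations satisfy $\overline{p_{1}}^{L}+\overline{p_{2}}^{L}=\overline{p_{1}+p_{2}}^{L}$ and $\lambda\overline{p}^{L}=\overline{\lambda p}^{L}$; then for fixed $\overline{q}^{L}$, distributivity in $\R[\underline{X}]$ and linearity of $L$ give
\[
\langle\overline{p_{1}}^{L}+\overline{p_{2}}^{L},\overline{q}^{L}\rangle_{L}=L((p_{1}+p_{2})q)=L(p_{1}q)+L(p_{2}q)=\langle\overline{p_{1}}^{L},\overline{q}^{L}\rangle_{L}+\langle\overline{p_{2}}^{L},\overline{q}^{L}\rangle_{L},
\]
and likewise $\langle\lambda\overline{p}^{L},\overline{q}^{L}\rangle_{L}=L((\lambda p)q)=\lambda L(pq)=\lambda\langle\overline{p}^{L},\overline{q}^{L}\rangle_{L}$; linearity in the second argument follows from symmetry. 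I do not expect any genuine obstacle here: the whole statement is a routine consequence of the definition of $U_{L}$ and the linearity of $L$, with well-definedness being the single step that actually makes use of the kernel.
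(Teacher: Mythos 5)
Your proof is correct and takes essentially the same route as the paper: well-definedness is obtained from the decomposition $L(pq)-L(p'q')=L((p-p')q)+L(p'(q-q'))$ together with the fact that, by its very definition, $U_{L}$ annihilates products with every element of $\R[\underline{X}]_{d+1}$, and symmetry and bilinearity are formal, exactly as the paper asserts. The only difference is that the paper's proof goes beyond the literal statement and also checks positive semidefiniteness and definiteness of the form (so that $\langle\,\cdot\,,\cdot\,\rangle_{L}$ is an inner product on $V_{L}$), which is the one step that genuinely uses Proposition \ref{nucleo0}; your remark that \ref{nucleo0} is not needed is accurate for the symmetric-bilinear-form claim itself, but the definiteness is what the later constructions (the projection $\Pi_{L}$ and orthonormal bases of $T_{L}$) rely on.
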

\begin{proof}
Let us prove first that $\langle\text{ . } ,\text{ . } \rangle_{L}$ is well defined. To do this take $p_{1},q_{1},p_{2},q_{2}\in\R[\underline{X}]_{d+1}$ with $\overline{p_{1}}^{L}=\overline{p_{2}}^{L}$ and $\overline{q_{1}}^{L}=\overline{q_{2}}^{L}$ then:
\begin{equation*}
\begin{aligned}
\langle \overline{p_{1}}^{L},\overline{q_{1}}^{L} \rangle_{L}=\langle \overline{p_{2}}^{L},\overline{q_{2}}^{L} \rangle_{L} & \iff  L(p_{1}q_{1})=L(p_{2}q_{2}) \iff L(p_{1}q_{1})-L(p_{2}q_{2})=0&&\\
 &\iff L(p_{1}q_{1})+L(-p_{2}q_{1})-L(-p_{2}q_{1})-L(p_{2}q_{2})=0 &&\\
 & \iff L((p_{1}-p_{2})q_{1})-L(p_{2}(q_{2}-q_{1}))=0 &&\\
\end{aligned}
\end{equation*}
The last equality holds since $p_{1}-p_{2},q_{2}-q_{1}\in U_{L}$.
The bilinearity and symmetry is trivial. $\langle\text{ . } ,\text{ . } \rangle_{L}$ is positive semidefinite since $L(\sum\R[\underline{X}]_{d+1}^{2})\subseteq \R_{\geq 0}$. It remains to prove that $\langle\text{ . } ,\text{ . } \rangle_{L}$ is even positive definite. Indeed, for all $p\in\R[\underline{X}]_{d+1}$ with $\langle \overline{p}^{L},\overline{p}^{L} \rangle_{L}=0$ then $L(p^{2})=0$ and then $p\in U_{L}$ as we have shown in \ref{nucleo}. 
\end{proof}

\begin{dyn}\label{mo}
For $i\in\{1,\ldots,n\}$, we define the  $i$-th truncated GNS multiplication operator of $L$ as the following map between euclidean vector subspaces of $V_{L}$, and denote by $M_{L,i}$:
\begin{equation}
M_{L,i}:\Pi(V_{L})\longrightarrow\Pi_{L}(V_{L}),\text{  }\overline{p}^{L}\mapsto\Pi(\overline{pX_{i}}^{L})\text{ for } p\in\R[\underline{X}]_{d}
\end{equation}
where $\Pi_{L}$ is the orthogonal projection map of $V_{L}$ into the vector subspace $\{\text{ }\overline{p}^{L}\text{ }|\text{ }p\in\R[\underline{X}]_{d}\}$ with respect to the inner product $\langle\text{ . } ,\text{ . } \rangle_{L}$. We will call and denote the subvector vector space:
\begin{equation}
T_{L}:=\Pi(V_{L})=\{\text{ }\overline{p}^{L}\text{ }|\text{ }p\in\R[\underline{X}]_{d}\}
\end{equation}
of $V_{L}$, the  GNS-truncation of $L$.
\end{dyn}

\begin{prop}
The $i$-th truncated GNS multiplication operator of $L$ is a self-adjoint endomorphism of $T_{L}$.
\end{prop}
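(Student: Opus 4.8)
The plan is to establish, in order, the four facts packaged into the statement: that $M_{L,i}$ is well defined on $T_L$, that it is $\R$-linear, that its image lies in $T_L$, and that it is self-adjoint for $\langle\,\cdot\,,\,\cdot\,\rangle_L$; only the first of these requires any thought.

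First I would check well-definedness, since a class $\overline p^{L}\in T_L$ may be represented by several polynomials of $\R[\underline{X}]_d$. So let $p,p'\in\R[\underline{X}]_d$ with $\overline p^{L}=\overline{p'}^{L}$, i.e. $p-p'\in U_L$. For any $q\in\R[\underline{X}]_d$ we have $X_iq\in\R[\underline{X}]_{d+1}$, so the defining property of the truncated GNS kernel $U_L$ gives $L\big((p-p')X_iq\big)=0$, that is, $\langle\,\overline{(p-p')X_i}^{L},\overline q^{L}\,\rangle_L=0$. Hence $\overline{(p-p')X_i}^{L}$ is orthogonal to $T_L=\{\overline q^{L}\mid q\in\R[\underline{X}]_d\}$, so $\Pi_L\big(\overline{(p-p')X_i}^{L}\big)=0$ and therefore $\Pi_L\big(\overline{pX_i}^{L}\big)=\Pi_L\big(\overline{p'X_i}^{L}\big)$, as required. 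Linearity of $M_{L,i}$ is then clear, since $p\mapsto\overline{pX_i}^{L}$ and $\Pi_L$ are both linear, and the image lies in $T_L$ because $\Pi_L$ is by construction the orthogonal projection of $V_L$ onto $T_L$; thus $M_{L,i}$ is an endomorphism of $T_L$.

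It remains to prove self-adjointness, and for this the one ingredient is the characteristic property of an orthogonal projection: $\langle\Pi_L(v),w\rangle_L=\langle v,w\rangle_L$ whenever $w\in T_L$, because $v-\Pi_L(v)\perp T_L$. Taking $v=\overline{pX_i}^{L}$ and $w=\overline q^{L}$ for $p,q\in\R[\underline{X}]_d$ (legitimate, since $pX_i\in\R[\underline{X}]_{d+1}$ and $q\in\R[\underline{X}]_d\subseteq\R[\underline{X}]_{d+1}$) gives
\[
\langle M_{L,i}\overline p^{L},\overline q^{L}\rangle_L=\langle\overline{pX_i}^{L},\overline q^{L}\rangle_L=L(pX_iq),
\]
and interchanging $p$ and $q$ together with the symmetry of $\langle\,\cdot\,,\,\cdot\,\rangle_L$ yields $\langle\overline p^{L},M_{L,i}\overline q^{L}\rangle_L=L(qX_ip)$. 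Since $pX_iq=qX_ip$ in $\R[\underline{X}]_{2d+1}\subseteq\R[\underline{X}]_{2d+2}$, where $L$ is defined, the two expressions agree, which is precisely self-adjointness.

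The only real obstacle is degree bookkeeping: one must keep track that multiplication by $X_i$ sends representatives from $\R[\underline{X}]_d$ into $\R[\underline{X}]_{d+1}$, so that the classes in $V_L$ and the projection $\Pi_L$ apply, and that every polynomial to which $L$ is applied has degree at most $2d+2$; once this is in place, the statement reduces to the universal property of the orthogonal projection and the commutativity of polynomial multiplication.
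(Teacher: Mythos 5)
Your proof is correct and takes essentially the same route as the paper: well-definedness follows from the defining property of $U_{L}$ together with elementary facts about the orthogonal projection $\Pi_{L}$, and self-adjointness follows from the projection identity $\langle\Pi_{L}(v),w\rangle_{L}=\langle v,w\rangle_{L}$ for $w\in T_{L}$ combined with commutativity of polynomial multiplication. The only cosmetic difference is that for well-definedness you show $\overline{(p-p')X_{i}}^{L}$ is orthogonal to $T_{L}$ so its projection vanishes, whereas the paper shows the projected class has zero norm; this is the same underlying computation.
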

\begin{proof}
Let us demonstrate first that the $i$-th truncated GNS multiplication operator of $L$ is well defined. $M_{L,i}$ is well defined if and only if $M_{L,i}(\overline{p}^{L})=\overline{0}^{L}$ for all $p\in U_{L}\cap\R[\underline{X}]_{d} $ if and only if $\Pi_{L}(\overline{X_{i}p}^{L})=\overline{0}^{L}$ for all $p\in U_{L}\cap\R[\underline{X}]_{d}$. Since $\Pi_{L}(\overline{X_{i}p}^{L})\in T_{L}$  we can choose $q\in\R[\underline{X}]_{d}$ such that $\overline{q}^{L}=\Pi_{L}(\overline{X_{i}p}^{L})$ and then:
\begin{align}
L(q^2)=\langle \overline{q},\overline{q} \rangle_{L}&=\langle \Pi_{L}(\overline{X_{i}p}^{L}), \Pi_{L}(\overline{X_{i}p}^{L}) \rangle_{L} \stackrel{\Pi_{L}\circ\Pi_{L}=\Pi_{L}}{=}\nonumber\langle \Pi_{L}(\overline{X_{i}p}^{L}), \overline{X_{i}p}^{L} \rangle_{L}  \\ &=\langle \overline{q}^{L},\overline{X_{i}p}^{L} \rangle_{L} =L(q(X_{i}p))=L((qX_{i})p)\stackrel{p\in U_{L}}{=}0 \nonumber
\end{align}
Therefore $\Pi_{L}(\overline{X_{i}p}^{L})=\overline{0}^{L}$ for all $p\in U_{L}$. Let us see now that $M_{L,i}$ are self-adjoint endomorphisms, for this let $p,q\in\R[\underline{X}]_{d}$ then:

\begin{align}
\langle M_{L,i}\nonumber&(\overline{p}^{L}) , \overline{q}^{L} \rangle_{L}=
\langle \Pi_{L}(\overline{X_{i}p}^{L}), \overline{q}^{L} \rangle_{L}=\langle \overline{X_{i}p}^{L}, \Pi_{L}(\overline{q}^{L}) \rangle_{L}=\langle \overline{X_{i}p}^{L}, \overline{q}^{L} \rangle_{L}=L((X_{i}p)q)\\ \nonumber
 =L(p(&X_{i}q))=\langle \overline{p}^{L}, \overline{X_{i}q}^{L}\rangle_{L} =\langle \Pi_{L}(\overline{p}^{L}), \overline{X_{i}q}^{L}\rangle_{L}=\langle \overline{p}^{L}, \Pi_{L}(\overline{X_{i}q}^{L})\rangle_{L}=\langle \overline{p}^{L}, M_{L,i}(\overline{q}^{L})\rangle_{L}
   \nonumber
\end{align}
\end{proof}

\begin{rem}\label{GNSmodule}
The GNS construction for $L\in\R[\underline{X}]^{*}$ with $L(\sum\R[\underline{X}]^{2})\subseteq \R_{\geq 0}$ is the same as the original \eqref{original} modulo $U_{L}$. The GNS representation space of $L$ and the GNS truncation of $L$ are the same $\frac{\R[\underline{X}]}{U_{L}}$, where:
\begin{equation}
 U_{L}=\{p\in\R[\underline{X}]\text{ }|\text{ }L(p^{2})\geq 0\}
\end{equation}
The truncated GNS multiplication operators of $L$ commute, since $\frac{\R[\underline{X}]}{U_{L}}$ is a commutative ring. One can easily prove that $U_{L}$ is an ideal. Indeed it is clear that if $p,q\in U_{L}$ then $L((p+q)^{2})=0$, and if $p\in U_{L}$ and $q\in\R[\underline{X}]$ then $L(p^{2}q^{2})=L(p(pq^{2}))\stackrel{p\in U_{L}}{=}0$ implies $pq\in U_{L}$.
\end{rem}

\begin{nyl}\label{usefull}
Remember that $L':=L_{\R[\underline{X}]_{2d+1}}$. Let us denote as $B_{L}$ the transformation matrix of the following bilinear form with respect to the standard monomial basis:
\begin{center} 
$\R[\underline{X}]_{d+1}\times\R[\underline{X}]_{d}\longrightarrow \R, (p,q) \longmapsto L(pq) $
\end{center}
Then it holds $\rk M_{L'}=\rk B_{L} $ and for every such $L$ linear form we can define its respective modified moment matrix as:
\begin{equation}
\widetilde{M_{L}}:=
\left(
\begin{array}{c|c}
\makebox{$M_{L'}$}&\makebox{$M_{L'}W_{L}$}\\
\hline
  \vphantom{\usebox{0}}\makebox{$W^{T}_{L}M_{L'}$}&\makebox{$W^{T}_{L}M_{L'}W_{L}$}
\end{array}
\right)\nonumber
\end{equation}

where $W_{L}$ is a matrix such that $M_{L'}W_{L}=C_{L}$, where $C_{L}$ is the submatrix of $B_{L}$ remaining from eliminating the columns corresponding to the matrix $M_{L'}$. $\widetilde{M_{L}}$ is well defined since it does not depend from the election of $W_{L}$ and it is positive semidefinite.
\end{nyl}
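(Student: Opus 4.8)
The plan is to route everything through the truncated GNS data of $L$ built in this section. For $|\gamma|\le d+1$ write $v_{\gamma}:=\overline{\underline{X}^{\gamma}}^{L}\in V_{L}$; the monomials of degree $\le d$ give vectors $v_{\alpha}\in T_{L}$ which span $T_{L}$, and by the definition of $\langle\cdot,\cdot\rangle_{L}$ together with Proposition \ref{multiplicacion} the matrix $M_{L'}$ is the Gram matrix of $(v_{\alpha})_{|\alpha|\le d}$, the (order $d+1$) moment matrix $M_{L}$ is the Gram matrix of $(v_{\gamma})_{|\gamma|\le d+1}$, and $B_{L}$ is the mixed Gram matrix $\big(\langle v_{\beta},v_{\gamma}\rangle_{L}\big)_{|\beta|\le d,\,|\gamma|\le d+1}$, inside which $M_{L'}$ occupies the columns indexed by monomials of degree $\le d$ and $C_{L}$ the complementary columns.

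First I would prove that $W_{L}$ exists — this is exactly the classical lemma of Smul'jan — using the orthogonal projection $\Pi_{L}\colon V_{L}\to T_{L}$ from \ref{mo}. For each monomial $\underline{X}^{\gamma}$ of degree exactly $d+1$, the vector $\Pi_{L}(v_{\gamma})$ lies in $T_{L}=\operatorname{span}(v_{\alpha})_{|\alpha|\le d}$, so it can be written $\Pi_{L}(v_{\gamma})=\sum_{|\alpha|\le d}(w_{\gamma})_{\alpha}\,v_{\alpha}$; collecting the $w_{\gamma}$ as the columns of a matrix $W_{L}$ and using that $\Pi_{L}$ is self-adjoint and fixes $T_{L}$ gives, for $|\beta|\le d$ and $|\gamma|=d+1$, $(M_{L'}W_{L})_{\beta\gamma}=\langle v_{\beta},\Pi_{L}(v_{\gamma})\rangle_{L}=\langle \Pi_{L}(v_{\beta}),v_{\gamma}\rangle_{L}=\langle v_{\beta},v_{\gamma}\rangle_{L}=L(\underline{X}^{\beta+\gamma})$, i.e.\ $M_{L'}W_{L}$ is the off-diagonal block of $M_{L}$, which is precisely $C_{L}$; this simultaneously yields the block decomposition of $M_{L}$ quoted before Theorem \ref{optimality}.

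The rank equality then drops out: since $B_{L}=(M_{L'}\mid C_{L})=(M_{L'}\mid M_{L'}W_{L})=M_{L'}(I\mid W_{L})$ and $(I\mid W_{L})$ has full row rank, $B_{L}$ and $M_{L'}$ have the same column space, hence $\rk B_{L}=\rk M_{L'}$. (Alternatively one argues directly that $\ker M_{L'}=\ker B_{L}^{T}$: a vector $c$ with $\sum_{\beta}c_{\beta}v_{\beta}$ orthogonal to all of $T_{L}$ — equivalently to all of $V_{L}$, since that element already lies in $T_{L}$ — must have $\sum_{\beta}c_{\beta}v_{\beta}=0$ by positive definiteness of $\langle\cdot,\cdot\rangle_{L}$, and then the same is forced for $B_{L}$.)

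Finally, for both the well-definedness and the positive semidefiniteness of $\widetilde{M_{L}}$ I would exhibit $\widetilde{M_{L}}$ as a genuine Gram matrix. Put $u_{\gamma}:=v_{\gamma}$ for $|\gamma|\le d$ and $u_{\gamma}:=\Pi_{L}(v_{\gamma})$ for $|\gamma|=d+1$, all vectors of the Euclidean space $V_{L}$. A block-by-block check — again using idempotency and self-adjointness of $\Pi_{L}$, and the identity of the previous paragraph — shows that $\big(\langle u_{\gamma},u_{\delta}\rangle_{L}\big)_{|\gamma|,|\delta|\le d+1}$ has upper-left block $M_{L'}$, off-diagonal block $M_{L'}W_{L}$, and lower-right block $W_{L}^{T}M_{L'}W_{L}$, i.e.\ it equals $\widetilde{M_{L}}$. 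Since each $u_{\gamma}$ depends only on $L$ — the auxiliary $W_{L}$ served merely to express $\Pi_{L}(v_{\gamma})$ in the spanning family $(v_{\alpha})_{|\alpha|\le d}$ — the matrix $\widetilde{M_{L}}$ does not depend on the choice of $W_{L}$, and being a Gram matrix it is positive semidefinite. I do not expect a deep obstacle here; the step that needs the most care is bookkeeping — keeping the transpose/row--column conventions for $B_{L}$, $C_{L}$, $W_{L}$ and the off-diagonal block of $M_{L}$ consistent — and, in the rank step, remembering that the relevant combination of the $v_{\beta}$ lies in $T_{L}$, so that being orthogonal to $T_{L}$ already forces it to vanish.
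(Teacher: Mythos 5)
Your proposal is correct, and it organizes the argument differently from the paper. The paper first proves $\rk M_{L'}=\rk B_{L}$ by comparing the images of the two maps $p\mapsto(q\mapsto L(pq))$ defined on $\R[\underline{X}]_{d}$ and on $\R[\underline{X}]_{d+1}$, using the Fr\'echet--Riesz representation theorem on $\R[\underline{X}]_{d}/(U_{L}\cap\R[\underline{X}]_{d})$ to produce, for each $p\in\R[\underline{X}]_{d+1}$, some $g\in\R[\underline{X}]_{d}$ with $L(pq)=L(gq)$ for all $q\in\R[\underline{X}]_{d}$; only afterwards does it obtain $W_{L}$, it then proves independence of the choice of $W_{L}$ by a column-by-column computation with $U_{L'}$, and it proves positive semidefiniteness separately by writing $M_{L'}=CC^{T}$ and exhibiting $\widetilde{M_{L}}=PP^{T}$. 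You instead construct $W_{L}$ directly from the orthogonal projection $\Pi_{L}$ (which is exactly the finite-dimensional content of the paper's Riesz step), get the rank identity as an immediate corollary of $B_{L}=M_{L'}(I\,|\,W_{L})$, and then settle well-definedness and positive semidefiniteness in one stroke by recognizing $\widetilde{M_{L}}$ as the Gram matrix of the intrinsic vectors $u_{\gamma}$; this is more economical than the paper's two separate computations and re-derives Smul'jan's block decomposition of $M_{L}$ along the way. The one place to be more explicit: your Gram identification is verified for the particular $W_{L}$ you built, so for independence of the choice you should add the one-line remark that \emph{any} $W$ with $M_{L'}W=C_{L}$ has columns representing $\Pi_{L}(v_{\gamma})$ --- the element $\sum_{\alpha}(w_{\gamma})_{\alpha}v_{\alpha}-\Pi_{L}(v_{\gamma})$ lies in $T_{L}$ and is orthogonal to every $v_{\beta}$, hence to $T_{L}$, hence vanishes --- which is precisely the observation you record in your parenthetical kernel argument, applied here as well; with that sentence inserted the proof is complete.
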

\begin{proof}
Notice that $M_{L'}$ is the transformation matrix of the linear map:
\begin{center} 
$\varphi:=\R[\underline{X}]_{d}\longrightarrow \R[\underline{X}]_{d}^{*},p\mapsto(a\mapsto L(pq)) $
\end{center}
 with respect to the standard monomial basis and in the same way $B_{L}$ is the transformation matrix of the linear map:
 \begin{center}
   $\psi:=\R[\underline{X}]_{d+1}\longrightarrow\R[\underline{X}]^{*}_{d},p\mapsto(q\mapsto L(pq))$
   \end{center}
 with respect to the standard monomial basis. Note that to prove  $\rk M_{L'}=\rk B_{L}$ it is the same than to prove $\varphi(\R[\underline{X}]_{d}) = \psi(\R[\underline{X}]_{d+1})$. It is obvious that $\varphi(\R[\underline{X}]_{d})\subseteq  \psi(\R[\underline{X}]_{d+1})$. For the other inclusion we take $\Lambda\in\psi(\R[\underline{X}]_{d+1}) $ then there exits $p\in\R[\underline{X}]_{d+1}$ such that $\Lambda(q)=\psi(p)(q)=L(pq)$ for all $q\in\R[\underline{X}]_{d}$. We look for a $g\in\R[\underline{X}]_{d}$ such that $\Lambda(q)=L(gq)$ for all $q\in\R[\underline{X}]_{d}$, because then $\varphi(h)(q)=L(gq)=\Lambda(q)$ for all $q\in\R[\underline{X}]_{d}$ implying $\varphi(g)=\Lambda$ and then we could conclude $\Lambda\in\varphi(\R[\underline{X}]_{d}) $. In other words, we want to show that there exists $g\in\R[\underline{X}]_{d}$ such that : $L(pq)=L(gq)$ for every $q\in\R[\underline{X}]_{d}$. With more different words, our aim is to find $g\in\R[\underline{X}]_{d}$ such that:
 \begin{center}
 $\langle \overline{p},\overline{q} \rangle_{L}=\langle \overline{g},\overline{q} \rangle_{L}\text{ for every }q\in\R[\underline{X}]_{d}$
 \end{center}     
For this we define the following linear form:
 \begin{center}
 $\Lambda_{p}:=\frac{\R[\underline{X}]_{d}}{U_{L}\cap\R[\underline{X}]_{d}}\longrightarrow \R,\overline{q}\mapsto L(pq)\text{ for every q}\in\R[\underline{X}]_{d}$
 \end{center}          
$\Lambda_{p}\in(\frac{\R[\underline{X}]_{d}}{U_{L}\cap\R[\underline{X}]_{d}})^{*}$. Since $\frac{\R[\underline{X}]_{d}}{U_{L}}$ is a finite dimensional euclidean vector space is in particular a Hilbert space and then by the Fr\'{e}chet-Riesz  Representation Theorem there exists $\overline{g}\in\frac{\R[\underline{X}]_{d}}{U_{L}\cap\R[\underline{X}]_{d}}$ with $g\in\R[\underline{X}]_{d}$, such that:
\begin{center}
$\Lambda_{p}(\overline{q})=\langle \overline{q},\overline{g} \rangle_{L}\text{ for every } q\in\R[\underline{X}]_{d}$.
\end{center}                            
Therefore $L(pq)=\Lambda_{p}(\overline{q})=\langle \overline{q},\overline{g} \rangle=L(qg)$ for every $q\in\R[\underline{X}]_{d}$. Then $\rk M_{L'}=\rk B_{L}$  and therefore there exits $W_{L}$ (may not be unique) such that $M_{L'}W_{L}=C_{L}$. Now, we claim that the modified moment matrix $\widetilde{M_{L}}$, does not depend from the choice of the matrix $W_{L}$ with the property $M_{L'}W=C_{L}$. Indeed, assume there are matrices $W_{1},W_{2}$ such that $M_{L'}W_{1}=M_{L'}W_{2}$. Let us denote:
\begin{equation}
W_{1}:=(P_{1},\ldots,P_{r_{d+1}}),\text{ }W_{2}:=(Q_{1},\ldots,Q_{r_{d+1}})\nonumber
\end{equation}  
 where $P_{1},\ldots,P_{r_{d+1}}$ and $Q_{1},\ldots,Q_{r_{d+1}}$ are the  respective column vectors of the matrices $W_{1}$ and $W_{2}$ and define $p_{i}:=\poly(Q_{i})$ and $q_{i}:=\poly(Q_{i})$ for $i\in\{1,\ldots,r_{d+1}\}$. Then we have the following matrix equality:
\begin{center}
$M_{L'}(P_{1},\ldots,P_{r_{d+1}})=M_{L'}(Q_{1},\ldots,Q_{r_{d+1}})$ 
\end{center}  
Let $i\in\{1,\ldots,r_{d+1}\}$ then: 
\begin{equation}
M_{L'}P_{i}=M_{L'}Q_{i}\iff M_{L'}(P_{i}-Q_{i})=0\iff (P_{i}-Q_{i})^{T}M_{L'}(P_{i}-Q_{i})\stackrel[\eqref{nucleo}]{\ref{multiplicacion}}{\iff} p_{i}-q_{i}\in U_{L^{'}}\nonumber
\end{equation}
This implies $L'(p_{i}p_{j})=L'(q_{i}q_{j})$ and again due to \ref{multiplicacion} we get that:
\begin{center}
 $P_{i}^{T}M_{L^{'}}P_{j}=Q_{i}^{T}M_{L^{'}}Q_{j}\text{ for all }i,j\in\{1,\ldots,r\}$
 \end{center}
Then we have got that:
\begin{center}
$W_{1}^{T}M_{L^{'}}W_{1}=\left(
\begin{array}{lll}
  P_{1}^{T}M_{L^{'}}P_{1}  &\ldots   &   P_{1}^{T}M_{L^{'}}P_{r}   \\
     \vdots & \ddots   & \vdots    \\
    P_{r}^{T}M_{L^{'}}P_{1}   & \ldots  &  P_{r}^{T}M_{L^{'}}P_{r} \\

    \end{array}
   \right)=\left(
\begin{array}{lll}
  Q_{1}^{T}M_{L^{'}}Q_{1}  &\ldots   &   Q_{1}^{T}M_{L^{'}}Q_{r}   \\
     \vdots & \ddots   & \vdots    \\
    Q_{r}^{T}M_{L^{'}}Q_{1}   & \ldots  &  Q_{r}^{T}M_{L^{'}}Q_{r} \\

    \end{array}
   \right)=W_{2}^{T}M_{L^{'}}W_{2}$
\end{center}
Therefore $W_{1}^{T}M_{L'}W_{1}=W_{2}^{T}M_{L'}W_{2}$, and we can conclude that $\widetilde{M_{L}}$ is well defined.

Moreover since  $M_{L'}$ is a  is a positive semidefinite matrix, then there exists a matrix $C\in\R^{s_{d}\times s_{d}}$ such that $M_{L'}=CC^{T}$ due to \ref{mpsd}. Then  we have the following factorization:
\begin{equation}
\widetilde{M_{L}}=
\left(
\begin{array}{c|c}
\makebox{$CC^{T}$}&\makebox{$CC^{T}W$}\\
\hline
  \vphantom{\usebox{0}}\makebox{$W^{T}CC^{T}$}&\makebox{$W^{T}CC^{T}W$}
\end{array}
\right)=\left(
\begin{array}{c|c}
\makebox{$C$}&\makebox{$0$}\\
\hline
  \vphantom{\usebox{0}}\makebox{$W^{T}C$}&\makebox{$0$}
\end{array}
\right)\left(
\begin{array}{c|c}
\makebox{$C$}&\makebox{$0$}\\
\hline
  \vphantom{\usebox{0}}\makebox{$W^{T}C$}&\makebox{$0$}
\end{array}
\right)^{T}\nonumber
\end{equation}
then taking $P:=\left(
\begin{array}{c|c}
\makebox{$C$}&\makebox{$0$}\\
\hline
  \vphantom{\usebox{0}}\makebox{$W^{T}C$}&\makebox{$0$}
\end{array}
\right)$ 
we get that $\widetilde{M_{L}}=PP^{T}$, which due to \ref{mpsd}, proves $\widetilde{M_{L}}$ is positive semidefinite.
\end{proof}

\section{Gaussian quadrature rule}
In this section we will prove the existence of a quadrature rule representation for the positive semidefinite linear form $L$ on a set that cointains $\R[\underline{X}]_{2d+1}$ by providing that the truncated GNS multiplication operators commute. We will also demonstrate that this condition it is strictly more general than the very well known condition of being flat, condition that for its part ensure the existence of a quadrature rule representation for $L$ on the whole space in contrast with the quadrature rule in a space that contains $\R[\underline{X}]_{2d+1}$ that we get in case the truncated GNS multiplication operators commute.

\begin{prop}\label{dimension}
The vector spaces $T_{L}$ and $\frac{\R[\underline{X}]_{d}}{U_{L}\cap\R[\underline{X}]_{d}}$ are canonically isomorphic.

\end{prop}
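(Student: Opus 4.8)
The plan is to exhibit an explicit canonical linear map between the two spaces and check it is a well-defined bijection. Recall that $T_L = \Pi(V_L) = \{\overline{p}^L \mid p\in\R[\underline{X}]_d\}$ is a subspace of $V_L = \R[\underline{X}]_{d+1}/U_L$, while the second space is the quotient of $\R[\underline{X}]_d$ by $U_L\cap\R[\underline{X}]_d$. So first I would define
\[
\Phi:\frac{\R[\underline{X}]_d}{U_L\cap\R[\underline{X}]_d}\longrightarrow T_L,\qquad \overline{p}\longmapsto \overline{p}^L,
\]
i.e.\ send the class of $p$ modulo $U_L\cap\R[\underline{X}]_d$ to the class of $p$ modulo $U_L$ (which indeed lies in $T_L$ since $p\in\R[\underline{X}]_d$). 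This is manifestly linear once it is well-defined, and well-definedness is immediate: if $\overline{p}=\overline{q}$ then $p-q\in U_L\cap\R[\underline{X}]_d\subseteq U_L$, hence $\overline{p}^L=\overline{q}^L$.

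Next I would check injectivity: if $\Phi(\overline{p})=\overline{0}^L$ then $p\in U_L$, and since also $p\in\R[\underline{X}]_d$ we get $p\in U_L\cap\R[\underline{X}]_d$, so $\overline{p}=\overline{0}$ in the source. Surjectivity is essentially the definition of $T_L$: any element of $T_L$ is $\overline{p}^L$ for some $p\in\R[\underline{X}]_d$, which is $\Phi(\overline{p})$. Finally I would note the isomorphism is ``canonical'' in that it does not depend on any choices — it is induced by the inclusion $\R[\underline{X}]_d\hookrightarrow\R[\underline{X}]_{d+1}$ followed by the quotient map, so it is the natural map on the relevant quotients.

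There is no real obstacle here; the only point requiring a half-line of care is verifying that $U_L\cap\R[\underline{X}]_d$ is exactly the kernel of $\R[\underline{X}]_d\to V_L$, which follows from $U_L\cap\R[\underline{X}]_d = \{p\in\R[\underline{X}]_d \mid L(p^2)=0\}$ via Proposition~\ref{nucleo0}. One could alternatively phrase the whole argument as: the composite $\R[\underline{X}]_d \hookrightarrow \R[\underline{X}]_{d+1} \twoheadrightarrow V_L$ has image $T_L$ and kernel $U_L\cap\R[\underline{X}]_d$, so the first isomorphism theorem for vector spaces yields the claim directly. I would present it in this compact form.
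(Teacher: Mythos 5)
Your proof is correct and takes essentially the same route as the paper: both isomorphisms are the canonical map induced by the inclusion $\R[\underline{X}]_{d}\hookrightarrow\R[\underline{X}]_{d+1}$ followed by passing to the quotient, the paper writing the target as $V_{L'}$ (implicitly using $U_{L'}=U_{L}\cap\R[\underline{X}]_{d}$) while you work with the quotient $\R[\underline{X}]_{d}/(U_{L}\cap\R[\underline{X}]_{d})$ directly and package the verification via the first isomorphism theorem. The only extra content in the paper's argument is that the map is an isometry for the GNS inner products, which the statement itself does not require but which is what gets used later (e.g.\ in Remark \ref{auxiliar1}).
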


\begin{proof}
Let us consider the following linear map between euclidean vector spaces:
\begin{equation}\label{sigmanull}
\sigma_{L}: T_{L}\longrightarrow V_{L'}:\overline{p}^{L} \longmapsto\overline{p}^{L'}\text{  for every }  p\in\R[\underline{X}]_{d}
\end{equation}
where remember we denoted $L':=L_{|\R[\underline{X}]_{2d}}$. It is well defined since for every $\overline{p}^{L}$, $\overline{q}^{L}\in T_{L}$ such that  $\overline{p}^{L}=\overline{q}^{L}$ we can assume without loss of generality that $p,q\in\R[\underline{X}]_{d}$, and therefore:

\begin{center}
$\overline{p}^{L}=\overline{q}^{L}\Leftrightarrow L((p-q)^{2})=0\Leftrightarrow L'((p-q)^2)=0\Leftrightarrow \overline{p}^{L'}=\overline{q}^{L'}\Leftrightarrow \sigma_{0}(\overline{p}^{L})=\sigma_{0}(\overline{q}^{L})$
\end{center}
 $\sigma_{0}$ is also a linear isometry, since for every $p,q\in\R[\underline{X}]_{d}$ we have:
\begin{equation*}
\langle \overline{p}^{L},\overline{q}^{L} \rangle_{L}=L(pq)=L^{'}(pq)=\langle \overline{p}^{L'},\overline{q}^{L'} \rangle_{L'}=\langle \sigma_{L}(\overline{p}^{L}), \sigma_{L}(\overline{q}^{L})\rangle_{L'}
\end{equation*} 
Then $\sigma_{L}$ is immediately injective. On other side, $\sigma_{L}$ is surjective since for every $\overline{p}^{L'}\in V_{L'}$ with $p\in\R[\underline{X}]_{d}$, it holds that $\sigma_{L}(\overline{p}^{L})=\overline{p}^{L'}$. Thence $\sigma_{L}$ is an isomorphism between vector spaces.
\end{proof}

\begin{notation}
For a linear form $\ell\in\R[\underline{X}]^{*}_{2d}$ such that $\ell(\sum\R[\underline{X}]_{d+1}^2)\subseteq\R_{\geq 0}$ we will detone by $\sigma_{\ell}$  the following isomorphism of euclidean vector spaces already defined in $\eqref{sigmanull}$:
\begin{equation}
\sigma_{\ell}:T_{\ell}\longmapsto V_{\ell'},\overline{p}^{\ell}\mapsto\overline{p}^{\ell'},\text{ for }p\in\R[\underline{X}]_{d} 
\end{equation} 
\end{notation}

\begin{rem}\label{auxiliar1}
For $v_{1},\ldots,v_{r}\in\R[\underline{X}]_{d}$, we have  $\overline{v_{1}}^{L},\ldots,\overline{v_{r}}^{L}$ is an orthonormal basis  of $T_{L}$ if and only if $ \overline{v_{1}}^{L'},\ldots,\overline{v_{r}}^{L'} $ is an orthonormal basis of $V_{L'}$. 
\end{rem}

The following Theorem and Lemma, are probably very well known and we will use them to prove Proposition \eqref{zero}. The proofs can be seen for example in \cite{cox} and \cite{lau}.

\begin{teor}\label{lio}
And ideal $I\subseteq \R[\underline{X}]$ is zero dimensional (i.e. $|V_{\C}(I)|<\infty$) if and only if the vector space $\R[\underline{X}]/I$ is finite dimensional. Moreover $|V_{\C}(I)|\leq\dim(\R[\underline{X}]/I)$, with equality if and only if the ideal $I$ is radical.
\end{teor}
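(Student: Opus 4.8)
The plan is to reduce all three assertions to Hilbert's Nullstellensatz over $\C$ by extension of scalars, exactly as in the treatments of \cite{cox} and \cite{lau}. Put $I_{\C}:=I\,\C[\underline{X}]$. Since $\C[\underline{X}]=\R[\underline{X}]\otimes_{\R}\C$, tensoring the exact sequence $0\to I\to\R[\underline{X}]\to\R[\underline{X}]/I\to 0$ with the $\R$-vector space $\C$ identifies $\C[\underline{X}]/I_{\C}$ with $(\R[\underline{X}]/I)\otimes_{\R}\C$; in particular $\dim_{\R}\R[\underline{X}]/I=\dim_{\C}\C[\underline{X}]/I_{\C}$, the natural map $\R[\underline{X}]/I\to\C[\underline{X}]/I_{\C}$ is injective, and $V_{\C}(I)=V_{\C}(I_{\C})$. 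So I would prove the statement with $\C[\underline{X}]$, $I_{\C}$ in place of $\R[\underline{X}]$, $I$ and then read it back.

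For the equivalence: if $\C[\underline{X}]/I_{\C}$ is finite dimensional then for each $j$ the classes $\overline{1},\overline{X_{j}},\overline{X_{j}^{2}},\ldots$ are linearly dependent, so $I_{\C}$ contains a nonzero univariate polynomial $g_{j}(X_{j})$, and every point of $V_{\C}(I_{\C})$ has its $j$-th coordinate among the finitely many roots of $g_{j}$; hence $V_{\C}(I_{\C})$ is finite. Conversely, if $V_{\C}(I_{\C})=\{p_{1},\ldots,p_{t}\}$, the monic univariate polynomial $h_{j}(X_{j}):=\prod_{k}\bigl(X_{j}-(p_{k})_{j}\bigr)$ vanishes on $V_{\C}(I_{\C})$, so by the Nullstellensatz $h_{j}^{N}\in I_{\C}$ for a common $N$; being monic in $X_{j}$, this lets one rewrite every power of $X_{j}$ modulo $I_{\C}$, so $\C[\underline{X}]/I_{\C}$ is spanned by the finitely many monomials $X_{1}^{a_{1}}\cdots X_{n}^{a_{n}}$ with $a_{j}<N\deg h_{j}$ for all $j$, and is therefore finite dimensional.

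For the inequality I would use the evaluation homomorphism $\varepsilon\colon\C[\underline{X}]/I_{\C}\to\C^{t}$, $\overline{f}\mapsto(f(p_{1}),\ldots,f(p_{t}))$. It is surjective by an interpolation argument: for $k\neq l$ choose a coordinate $j(k,l)$ with $(p_{k})_{j(k,l)}\neq(p_{l})_{j(k,l)}$, set $L_{kl}:=X_{j(k,l)}-(p_{l})_{j(k,l)}$ and $e_{k}:=\prod_{l\neq k}L_{kl}/L_{kl}(p_{k})$, so that $e_{k}(p_{l})=\delta_{kl}$ and the $\overline{e_{k}}$ hit the standard basis of $\C^{t}$. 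Hence $\dim_{\C}\C[\underline{X}]/I_{\C}\geq t=|V_{\C}(I)|$.

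For the equality case, $\ker\varepsilon$ is the set of classes of polynomials vanishing on $V_{\C}(I_{\C})$, so by the Nullstellensatz $\ker\varepsilon=\sqrt{I_{\C}}/I_{\C}$ and $\varepsilon$ induces an isomorphism $\C[\underline{X}]/\sqrt{I_{\C}}\cong\C^{t}$; thus $\dim_{\C}\C[\underline{X}]/I_{\C}=t$ if and only if $I_{\C}=\sqrt{I_{\C}}$. To descend this to $I$, I would invoke that $\R$ is a perfect field, so extension of scalars to $\C$ preserves reducedness; together with the injection $\R[\underline{X}]/I\hookrightarrow\C[\underline{X}]/I_{\C}$ (a subring of a reduced ring is reduced) this yields "$I$ radical $\iff I_{\C}$ radical", finishing the proof. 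The step I expect to require the most care is precisely this last transfer: radicality is not stable under arbitrary base change, so one genuinely needs perfectness of $\R$ (equivalently separability of $\C/\R$), and one should also be explicit that $\dim$ and $V_{\C}$ are unaffected by passing from $\R$ to $\C$; the finiteness equivalence and the evaluation-map step are then routine.
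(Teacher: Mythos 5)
Your proposal is correct, but note that the paper itself does not prove this statement: its ``proof'' is only a citation to Theorem 2.6 of \cite{lau}, so there is no internal argument to compare against. What you wrote is essentially a self-contained rendition of the standard argument behind that cited result: complexify, use linear dependence of the powers of each $X_{j}$ plus the Nullstellensatz for the finiteness equivalence, use interpolation at the points of $V_{\C}(I)$ for the lower bound, identify the kernel of the evaluation map with $\sqrt{I_{\C}}/I_{\C}$ for the equality case, and descend along $\R\subseteq\C$. Your explicit bookkeeping of the scalar extension (the identification $\C[\underline{X}]/I_{\C}\cong(\R[\underline{X}]/I)\otimes_{\R}\C$, hence $\dim_{\R}\R[\underline{X}]/I=\dim_{\C}\C[\underline{X}]/I_{\C}$, the injectivity of $\R[\underline{X}]/I\to\C[\underline{X}]/I_{\C}$, and the preservation of reducedness using that $\R$ is perfect) is precisely the part usually glossed over, and you are right that this is where care is needed; all of these steps are sound. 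One small point you should make explicit: the paper defines ``radical'' as $I=\mathcal{I}(V_{\C}(I))$ (see the definition immediately after the theorem), whereas your argument yields the ring-theoretic condition $I=\sqrt{I}$ (equivalently $I_{\C}=\sqrt{I_{\C}}$). These coincide, and your own setup contains the bridge: since $\C$ is free over $\R$ one has $I_{\C}\cap\R[\underline{X}]=I$, so by the complex Nullstellensatz $\mathcal{I}(V_{\C}(I))=\sqrt{I_{\C}}\cap\R[\underline{X}]=\sqrt{I}$; alternatively Lemma \ref{radicalillo} of the paper serves the same purpose. With that one sentence added, the proof is complete.
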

\begin{proof}
Theorem $2.6$ page $15$ in \cite{lau}.
\end{proof}

\begin{defi}
Let $I\subseteq\R[\underline{X}]$ be an ideal. $I$ is said to be radical when $I=\mathcal{I}(V_{\C}(I))$.
\end{defi}

\begin{lem}\label{radicalillo}
Let  $I\subseteq \R[\underline{X}]$ be an ideal. $I$ is  radical if and only if 
\begin{equation}\label{radical}
\text{For all } g\in\R[\underline{X}]\text{ such that }g^{2}\in I{\implies}g\in I
\end{equation}
\end{lem}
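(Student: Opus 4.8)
The plan is to establish the two implications of the equivalence separately, treating the ``radical $\Rightarrow$ \eqref{radical}'' direction as the routine one and the converse as the substantive one. For the routine direction I would assume $I=\mathcal{I}(V_{\C}(I))$ and take $g\in\R[\underline{X}]$ with $g^{2}\in I$; then $g^{2}$ vanishes at every point of $V_{\C}(I)$, and since $\C$ has no zero divisors $g$ itself vanishes there, so $g\in\mathcal{I}(V_{\C}(I))=I$.

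For the converse I would assume property \eqref{radical} and prove $I=\mathcal{I}(V_{\C}(I))$. The inclusion $I\subseteq\mathcal{I}(V_{\C}(I))$ is automatic, so the work is in the reverse inclusion. Given $g\in\mathcal{I}(V_{\C}(I))$, I would invoke Hilbert's Nullstellensatz over the algebraically closed field $\C$, applied to the ideal $I\,\C[\underline{X}]$ generated by $I$ in $\C[\underline{X}]$: since $g$ vanishes on $V(I\,\C[\underline{X}])=V_{\C}(I)$, it lies in the radical of $I\,\C[\underline{X}]$, i.e. $g^{m}\in I\,\C[\underline{X}]$ for some $m\geq 1$. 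Writing out such a membership relation and comparing real parts of the coefficients (the generators can be taken in $I\subseteq\R[\underline{X}]$ and $g^{m}$ has real coefficients) gives $g^{m}\in I$ back in $\R[\underline{X}]$.

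It then remains to pass from $g^{m}\in I$ to $g\in I$ using only \eqref{radical}, which I would do by induction on $m$: if $m\leq 1$ there is nothing to prove; if $m=2k$ then $(g^{k})^{2}=g^{m}\in I$, so \eqref{radical} yields $g^{k}\in I$ with $k<m$; and if $m=2k+1\geq 3$ then $g^{m+1}=g\cdot g^{m}\in I$ equals $(g^{k+1})^{2}$, so \eqref{radical} yields $g^{k+1}\in I$ with $k+1<m$. Either way the exponent strictly drops, so finitely many steps bring us to $g\in I$. The only genuinely nontrivial ingredient is Hilbert's Nullstellensatz (equivalently, the identification of $\mathcal{I}(V_{\C}(I))$ with the radical of the extended ideal); I expect the mild bookkeeping between $\R[\underline{X}]$ and $\C[\underline{X}]$ to be the only subtle point, everything else being elementary descent on the exponent.
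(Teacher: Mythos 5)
Your proof is correct. Note that the paper itself gives no argument for this lemma: it simply cites \cite[Lemma 2.2]{lau}, so what you have written is a self-contained replacement for that citation, and it is essentially the standard argument underlying it. Both directions check out: for ``radical $\Rightarrow$ \eqref{radical}'' the observation that $\C$ has no zero divisors is exactly what is needed, given the paper's definition $I=\mathcal{I}(V_{\C}(I))$ with $\mathcal{I}$ taken inside $\R[\underline{X}]$. For the converse, the three ingredients you isolate are all sound: (a) the strong Nullstellensatz over $\C$ applied to the extended ideal $I\,\C[\underline{X}]$, whose complex zero set is indeed $V_{\C}(I)$ since the generators can be taken from $I$; (b) the descent from $g^{m}\in I\,\C[\underline{X}]$ to $g^{m}\in I$ by writing the complex coefficient polynomials as $a_{j}+ib_{j}$ and taking real parts, which is legitimate precisely because the $h_{j}\in I$ and $g^{m}$ have real coefficients; and (c) the reduction from $g^{m}\in I$ to $g\in I$ using only the square condition, where your case split ($m=2k$ gives $(g^{k})^{2}\in I$; $m=2k+1\geq 3$ gives $(g^{k+1})^{2}=g\cdot g^{m}\in I$, using that $I$ is an ideal) strictly decreases the exponent and terminates. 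The only difference from the cited source is organizational: Laurent's lemma phrases radicality ring-theoretically as $\sqrt{I}=I$ and invokes the Nullstellensatz in the form $\sqrt{I}=\mathcal{I}(V_{\C}(I))$ for ideals of $\R[\underline{X}]$ separately, whereas you fold the passage between $\R[\underline{X}]$ and $\C[\underline{X}]$ directly into the proof; the mathematical content is the same, and your version has the merit of making the real--complex bookkeeping explicit rather than hidden in the statement of the Nullstellensatz over $\R[\underline{X}]$.
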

\begin{proof}
 There is a proof in \cite[Lemma 2.2]{lau}.
\end{proof}
\begin{prop}\label{flor}
Let $\Lambda\in\R[\underline{X}]^{*}$ such that $\Lambda(\sum\R[\underline{X}]^{2})\subseteq\R_{\geq 0}$. Then $U_{\Lambda}$ is a radical ideal.
\end{prop}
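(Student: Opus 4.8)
The plan is to invoke the square-root criterion for radicality from Lemma~\ref{radicalillo}. First I would recall from Remark~\ref{GNSmodule} that, in the present (non-truncated) setting, $U_{\Lambda}$ is genuinely an ideal of $\R[\underline{X}]$ and admits the description $U_{\Lambda}=\{p\in\R[\underline{X}]\mid\Lambda(p^{2})=0\}$. So, by Lemma~\ref{radicalillo}, it suffices to check that whenever $g\in\R[\underline{X}]$ satisfies $g^{2}\in U_{\Lambda}$, one also has $g\in U_{\Lambda}$.

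Next I would exploit the Cauchy--Schwarz inequality. The computation carried out in the proof of Proposition~\ref{nucleo0} to establish \eqref{cauchy} uses only linearity of the functional and its nonnegativity on squares, hence it applies verbatim to $\Lambda$ on the whole ring, giving $\Lambda(pq)^{2}\le\Lambda(p^{2})\Lambda(q^{2})$ for all $p,q\in\R[\underline{X}]$. Now assume $g^{2}\in U_{\Lambda}$, i.e. $\Lambda(g^{4})=\Lambda((g^{2})^{2})=0$. Applying Cauchy--Schwarz to the pair $1$ and $g^{2}$ yields $\Lambda(g^{2})^{2}=\Lambda(1\cdot g^{2})^{2}\le\Lambda(1)\,\Lambda(g^{4})=0$, hence $\Lambda(g^{2})=0$, i.e. $g\in U_{\Lambda}$. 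By Lemma~\ref{radicalillo} this shows $U_{\Lambda}$ is radical.

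There is essentially no hard step here: the only point requiring a moment of care is that the inequality \eqref{cauchy} was stated only for polynomials of bounded degree, so one should note that the same one-line argument works without any degree restriction because $\Lambda$ is defined and positive semidefinite on all of $\R[\underline{X}]$. In fact one can avoid even mentioning $\Lambda(1)$ and argue directly with the discriminant of the nonnegative polynomial $t\mapsto\Lambda\big((g^{2}+t)^{2}\big)=\Lambda(g^{4})+2t\,\Lambda(g^{2})+t^{2}\Lambda(1)$, whose constant term vanishes, which already forces its linear coefficient $\Lambda(g^{2})$ to be zero.
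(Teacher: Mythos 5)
Your proof is correct and follows essentially the same route as the paper: $U_{\Lambda}$ is an ideal by Remark \ref{GNSmodule}, and the square criterion of Lemma \ref{radicalillo} is verified by showing $\Lambda(g^{2})=0$ whenever $g^{2}\in U_{\Lambda}$. The only difference is cosmetic: the paper obtains $\Lambda(g^{2}\cdot 1)=0$ directly from the kernel description of $U_{\Lambda}$ (taking $q=1$), whereas you rederive the same fact by applying Cauchy--Schwarz to $1$ and $g^{2}$, making explicit the step the paper leaves implicit.
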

\begin{proof}
In \ref{GNSmodule} we saw that $U_{\Lambda}$ is and ideal, let us prove that it is real radical ideal. Let $g\in\R[\underline{X}]$ such that  $g^{2}\in U_{\Lambda}$. In particular $\Lambda(g^{2}1)=0$ and this implies $g\in U_{\Lambda}$.
\end{proof}

\begin{prop}\label{zero}
Let $\Lambda=\sum_{i=1}^{N}\lambda_{i}\ev_{a_{i}}\in\R[\underline{X}]^{*}$, with $N\in\N$, $\lambda_{1}>0,\ldots,\lambda_{N}>0$, and $a_{1},\ldots,a_{N}\in\R^{n}$ then:
\begin{equation}
  \dim(\frac{\R[\underline{X}]}{U_{\Lambda}})=|\{a_{1},\ldots,a_{N}\}| \nonumber
\end{equation}
\end{prop}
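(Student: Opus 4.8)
The plan is to identify $\R[\underline X]/U_\Lambda$ explicitly as a product of copies of $\R$ indexed by the \emph{distinct} points among $a_1,\dots,a_N$, and then read off the dimension. Write $\{b_1,\dots,b_s\}:=\{a_1,\dots,a_N\}$ for the set of distinct nodes, so $s=|\{a_1,\dots,a_N\}|$ is the number we want to match. The key structural observation is that $U_\Lambda=\{p\in\R[\underline X]\mid \Lambda(p^2)=0\}=\{p\mid \sum_i\lambda_i p(a_i)^2=0\}$; since every $\lambda_i>0$, this forces $p(a_i)=0$ for all $i$, hence $p(b_j)=0$ for all $j$. Conversely any $p$ vanishing on all $b_j$ lies in $U_\Lambda$ (indeed $\Lambda(pq)=\sum_i\lambda_i p(a_i)q(a_i)=0$ for every $q$). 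Therefore
\begin{equation}
U_\Lambda=\mathcal I(\{b_1,\dots,b_s\})=\bigcap_{j=1}^{s}\mathfrak m_{b_j},\nonumber
\end{equation}
the vanishing ideal of a finite set of real points, where $\mathfrak m_{b_j}$ denotes the maximal ideal of polynomials vanishing at $b_j$.

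Next I would invoke the theory already set up: by Proposition~\ref{flor}, $U_\Lambda$ is a radical ideal, and $V_\C(U_\Lambda)=\{b_1,\dots,b_s\}$ because a complex point $z$ lies in $V_\C(U_\Lambda)$ iff every polynomial vanishing on all the $b_j$ also vanishes at $z$, which (the $b_j$ being real, hence the ideal being generated by real separating polynomials together with, say, products like $\prod_j(\underline X-b_j)$ coordinatewise) holds exactly when $z\in\{b_1,\dots,b_s\}$. Then Theorem~\ref{lio} applies: since $|V_\C(U_\Lambda)|=s<\infty$, the quotient $\R[\underline X]/U_\Lambda$ is finite dimensional, and since $U_\Lambda$ is radical the inequality in that theorem is an equality, giving
\begin{equation}
\dim\Big(\frac{\R[\underline X]}{U_\Lambda}\Big)=|V_\C(U_\Lambda)|=s=|\{a_1,\dots,a_N\}|.\nonumber
\end{equation}

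Alternatively, and perhaps more elementarily, one can avoid Theorem~\ref{lio} and argue directly: the evaluation map $\R[\underline X]/U_\Lambda\to\R^{s}$, $\overline p\mapsto(p(b_1),\dots,p(b_s))$, is well defined and injective by the description of $U_\Lambda$ above, and surjective because one can build, for each $j$, an interpolation polynomial taking value $1$ at $b_j$ and $0$ at the other nodes (a product of affine forms separating $b_j$ from each $b_k$, $k\neq j$); hence it is an isomorphism and the dimension is $s$. The main thing to be careful about is the surjectivity/interpolation step and the identification $V_\C(U_\Lambda)=\{b_1,\dots,b_s\}$ — everything else is a direct consequence of the positivity of the weights $\lambda_i$ and of the already-proved Proposition~\ref{flor} and Theorem~\ref{lio}. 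I expect this identification of the complex variety with the real node set to be the only point requiring a genuine (if short) argument.
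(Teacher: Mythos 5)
Your main argument is essentially the paper's proof: you identify $U_{\Lambda}$ with the vanishing ideal of the node set (using positivity of the weights), note $V_{\C}(U_{\Lambda})$ is exactly that set, and conclude via Proposition \ref{flor} (radicality) and Theorem \ref{lio} that the dimension equals the number of distinct nodes, which is correct. Your alternative, more elementary route via the evaluation map $\R[\underline{X}]/U_{\Lambda}\to\R^{s}$ and interpolation polynomials is also sound, but the core approach coincides with the paper's.
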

\begin{proof}
We have the following equalities:
\begin{align}
U_{\Lambda}  &\nonumber=\{ p\in\R[\underline{X}]\text{ }|\sum_{i=1}^{N}\lambda_{i}p^{2}(a_{i})=0\}=
\text{  }\{p\in\R[\underline{X}]\text{ }|\text{  }p^{2}(a_{i})=0\text{ for all }i\in\{1,\ldots,n\} \} \\ \nonumber
&\nonumber=\{p\in\R[\underline{X}]\text{ }|\text{  }p(a_{i})=0\text{ for all }i\in\{1,\ldots,n\} \}=
\it{I}(\{a_{1},\ldots,a_{N}\}) \nonumber
\end{align}
 
and since $\{a_{1},\ldots,a_{N}\}\subseteq\R^{n}$ is an algebraic set, by the ideal-variety correspondence (see \cite{cox}), it holds: 
\begin{center}
 $V_{\C}(\it{I}(\{a_{1},\ldots,a_{N}\}))=\{a_{1},\ldots,a_{N}\}$
\end{center}
what is the same as $V_{\C}(U_{\Lambda})=\{a_{1},\ldots,a_{N}\}$. Notice that by Theorem \eqref{lio} is enough to prove that $U_{\Lambda}$ is radical to finish the proof. In fact by Proposition \ref{flor} $U_{\Lambda}$ is radical.  Applying Theorem \ref{lio} we have the result.
\end{proof}


Let us review some known bounds on the number of nodes of quadrature rules for $L$ on $\R[\underline{X}]_{2d+2}$ and on $\R[\underline{X}]_{2d+1}$ (see \cite{cf0} and \cite{Put}).

\begin{prop}\label{extremalbound}
Then number of nodes $N$, of a quadrature rule for $L$ satisfies:
\begin{equation}
\rk M_{L}\leq N\leq |V_{\C}(U_{L})|\nonumber
\end{equation}
\end{prop}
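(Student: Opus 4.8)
The plan is to prove the two inequalities separately, writing the quadrature rule as $L(p)=\sum_{i=1}^{N}\lambda_{i}p(a_{i})$ for all $p\in\R[\underline{X}]_{2d+2}$, where $a_{1},\ldots,a_{N}\in\R^{n}$ are the distinct nodes and $\lambda_{1},\ldots,\lambda_{N}>0$ the weights. Both bounds then drop out of elementary observations: the lower bound from a rank-one decomposition of the moment matrix, the upper bound from the positivity of the weights forcing the nodes into $V_{\C}(U_{L})$.

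For the lower bound $\rk M_{L}\leq N$, I would note that every monomial $\underline{X}^{\alpha+\beta}$ with $|\alpha|,|\beta|\leq d+1$ has degree at most $2d+2$, so the quadrature rule applies entrywise to $M_{L}=(L(\underline{X}^{\alpha+\beta}))_{|\alpha|,|\beta|\leq d+1}$, yielding
\begin{equation*}
M_{L}=\sum_{i=1}^{N}\lambda_{i}V_{d+1}(a_{i})V_{d+1}(a_{i})^{T}.
\end{equation*}
Each summand is a rank-one symmetric matrix, since $V_{d+1}(a_{i})\neq 0$ (its first coordinate is $1$), so subadditivity of the rank gives $\rk M_{L}\leq N$. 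This half does not even use positive semidefiniteness of $L$.

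For the upper bound $N\leq|V_{\C}(U_{L})|$, I would show $\{a_{1},\ldots,a_{N}\}\subseteq V_{\C}(U_{L})$. Take any $p\in U_{L}$; then $p\in\R[\underline{X}]_{d+1}$, so $p^{2}\in\R[\underline{X}]_{2d+2}$, and using the characterization $U_{L}=\{p\in\R[\underline{X}]_{d+1}\mid L(p^{2})=0\}$ from Proposition \ref{nucleo0} together with the quadrature rule we get $0=L(p^{2})=\sum_{i=1}^{N}\lambda_{i}p(a_{i})^{2}$. Since the $\lambda_{i}$ are strictly positive and the reals $p(a_{i})^{2}$ are nonnegative, every term vanishes, so $p(a_{i})=0$ for all $i$. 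As $p$ was an arbitrary element of $U_{L}$, each node $a_{i}$ is a common (real, hence complex) zero of $U_{L}$, and since the $a_{i}$ are distinct, $N\leq|V_{\C}(U_{L})|$. There is no genuine obstacle here; the only points requiring care are matching degrees so that both $\underline{X}^{\alpha+\beta}$ and $p^{2}$ remain in $\R[\underline{X}]_{2d+2}$ (this is exactly where exactness of the quadrature rule on $\R[\underline{X}]_{2d+2}$, i.e. $L\in\R[\underline{X}]^{*}_{2d+2}$, is used), and invoking $\lambda_{i}>0$ and the reality of the nodes to pass from a vanishing sum of squares to the vanishing of each square.
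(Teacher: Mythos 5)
Your proof is correct, and the upper bound $N\leq|V_{\C}(U_{L})|$ is argued exactly as in the paper: take $p\in U_{L}$, use $L(p^{2})=0$ together with the quadrature rule and the strict positivity of the weights to conclude $p(a_{i})=0$ for every node, hence $\{a_{1},\ldots,a_{N}\}\subseteq V_{\C}(U_{L})$. For the lower bound, however, you take a genuinely different and more elementary route. The paper extends the quadrature data to $\Lambda:=\sum_{i=1}^{N}\lambda_{i}\ev_{a_{i}}\in\R[\underline{X}]^{*}$, embeds $\R[\underline{X}]_{d+1}/U_{L}$ canonically into $\R[\underline{X}]/U_{\Lambda}$, and invokes Proposition \ref{zero} (which rests on the ideal--variety correspondence and the radicality of $U_{\Lambda}$) to get $\rk M_{L}=\dim(\R[\underline{X}]_{d+1}/U_{L})\leq\dim(\R[\underline{X}]/U_{\Lambda})=N$; this also implicitly uses the identification of $\rk M_{L}$ with $\dim V_{L}$ from the discussion around Proposition \ref{flat}. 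You instead apply the quadrature rule entrywise to obtain $M_{L}=\sum_{i=1}^{N}\lambda_{i}V_{d+1}(a_{i})V_{d+1}(a_{i})^{T}$ and conclude by subadditivity of rank, since each summand has rank one. Your argument is shorter, purely linear-algebraic, needs neither Proposition \ref{zero} nor the identification $\rk M_{L}=\dim V_{L}$, and, as you note, does not even use positive semidefiniteness of $L$ for this half; what the paper's quotient-space formulation buys is that it reuses machinery ($\Lambda$, $U_{\Lambda}$, Proposition \ref{zero}) that recurs in later proofs such as \ref{fbound} and the Main Theorem, so the dimension-comparison viewpoint integrates more tightly with the truncated GNS framework developed there.
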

\begin{proof}
Let $L=\sum_{i=1}^{N}\lambda_{i}\ev_{a_{i}}\in\R[\underline{X}]^{*}_{2d+2}$ for $a_{i},\ldots,a_{N}\in\R^{n}$ pairwise different points and $\lambda_{1},\ldots,\lambda_{N}>0$ weights and define $\Lambda:=\sum_{i=1}^{N}\lambda_{i}\ev_{a_{i}}\in\R[\underline{X}]^{*}$. Let us consider the following canonical map:
\begin{equation}
\frac{\R[\underline{X}]_{d+1}}{U_{L}}\hookrightarrow\frac{\R[\underline{X}]}{U_{\Lambda}}\nonumber
\end{equation}
By Proposition \ref{zero} we have that:
\begin{equation}
 \rk M_{L} = \dim(\frac{\R[\underline{X}]_{d+1}}{U_{L}})\leq \dim(\frac{\R[\underline{X}]}{U_{\Lambda}})=N \nonumber
\end{equation} 
On the other side, it holds that $\{a_{1},\ldots,a_{N} \}\subseteq V_{\C}(U_{L})$, since for all $p\in U_{L}$ we have $L(p^{2})=0$ and then $p(a_{i})=0$ for all $i\in\{1,\ldots,N\}$. this implies $ N\leq |V_{\C}(U_{L})|$.
\end{proof}

\begin{prop}\label{fbound}
The number of nodes $N$, of a quadrature rule for $L$ on $\R[\underline{X}]_{2d+1}$ satisfies:
\begin{equation}
 N\geq \dim (T_{L}) \nonumber
\end{equation}
\end{prop}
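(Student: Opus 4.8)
The plan is to exhibit a natural surjective linear map from the space $\R^{N}$ (indexed by the nodes of the quadrature rule) onto $T_{L}$, which forces $N = \dim \R^{N} \geq \dim T_{L}$. Suppose $L(p) = \sum_{i=1}^{N} \lambda_{i}\, p(a_{i})$ for all $p \in \R[\underline{X}]_{2d+1}$, with pairwise distinct $a_{1},\dots,a_{N} \in \R^{n}$ and $\lambda_{i} > 0$. The key observation is that the inner product $\langle \overline{p}^{L}, \overline{q}^{L}\rangle_{L} = L(pq)$ for $p,q \in \R[\underline{X}]_{d}$ is computed by evaluation, since $pq \in \R[\underline{X}]_{2d} \subseteq \R[\underline{X}]_{2d+1}$, so $\langle \overline{p}^{L}, \overline{q}^{L}\rangle_{L} = \sum_{i=1}^{N} \lambda_{i}\, p(a_{i}) q(a_{i})$. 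This realizes $(T_{L}, \langle\cdot,\cdot\rangle_{L})$, via $\overline{p}^{L} \mapsto (\sqrt{\lambda_{i}}\, p(a_{i}))_{i=1}^{N}$, as a subspace of $\R^{N}$ with the standard inner product; I would check this map is well defined and isometric exactly as in the proof of Proposition \ref{dimension} (if $L(p^{2}) = 0$ then $\sum \lambda_{i} p(a_{i})^{2} = 0$, hence each $p(a_{i}) = 0$), and an isometry of euclidean spaces is injective. Its image is a subspace of dimension $\dim T_{L}$ inside $\R^{N}$, whence $\dim T_{L} \leq N$.

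An alternative, essentially dual, route is to argue by contradiction: if $\dim T_{L} > N$, consider the evaluation map $\varepsilon : \R[\underline{X}]_{d} \to \R^{N}$, $p \mapsto (p(a_{i}))_{i=1}^{N}$; since $\dim T_{L} = \dim \R[\underline{X}]_{d}/(U_{L}\cap \R[\underline{X}]_{d})$ by Proposition \ref{dimension}, and $\ker \varepsilon \subseteq U_{L}\cap\R[\underline{X}]_{d}$ (because $p(a_{i}) = 0$ for all $i$ forces $L(p^{2}) = \sum \lambda_{i} p(a_{i})^{2} = 0$), we would get a surjection $\R^{N} \twoheadrightarrow \R[\underline{X}]_{d}/\ker\varepsilon$ of dimension at least $\dim T_{L} > N$, which is absurd. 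Either formulation works; I would present the first since it reuses the isometry machinery ($\sigma_{L}$ and Remark \ref{auxiliar1}) already set up.

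I do not expect a serious obstacle here. The only point requiring a little care is the well-definedness of the map on the quotient $T_{L}$: one must confirm that the quadrature formula, valid a priori only on $\R[\underline{X}]_{2d+1}$, still computes $L(p^{2})$ for $p \in \R[\underline{X}]_{d}$, which holds because $\deg(p^{2}) \leq 2d \leq 2d+1$. After that, the containment $\ker \varepsilon \subseteq U_{L} \cap \R[\underline{X}]_{d}$ is immediate from positivity of the weights, and the dimension count closes the argument. It is worth noting that this bound complements Proposition \ref{extremalbound}: there the lower bound was $\rk M_{L} = \dim(\R[\underline{X}]_{d+1}/U_{L})$ for rules on $\R[\underline{X}]_{2d+2}$, whereas here, with the weaker requirement that the rule hold only on $\R[\underline{X}]_{2d+1}$, the relevant lower bound drops to $\dim T_{L} = \dim(\R[\underline{X}]_{d}/(U_{L}\cap\R[\underline{X}]_{d}))$.
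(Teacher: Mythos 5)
Your proof is correct, and it takes a noticeably more direct route than the paper. The paper's proof also builds an isometry out of $T_{L}$, but its target is $\R[\underline{X}]/U_{\Lambda}$ where $\Lambda:=\sum_{i=1}^{N}\lambda_{i}\ev_{a_{i}}\in\R[\underline{X}]^{*}$ (the map $\sigma_{1}$ of \eqref{sigma1}); injectivity then gives $\dim T_{L}\leq\dim(\R[\underline{X}]/U_{\Lambda})$, and the proof is closed by invoking Proposition \ref{zero} to identify $\dim(\R[\underline{X}]/U_{\Lambda})=N$, which in turn rests on Theorem \ref{lio} and the ideal--variety machinery (radicality of $U_{\Lambda}$ via Proposition \ref{flor}). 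Your map $\overline{p}^{L}\mapsto(\sqrt{\lambda_{i}}\,p(a_{i}))_{i}$ lands directly in $\R^{N}$, whose dimension is $N$ on the nose, so you get the bound from positivity of the weights and linear algebra alone, bypassing Proposition \ref{zero} entirely; in effect you have composed the paper's $\sigma_{1}$ with the identification of $\R[\underline{X}]/U_{\Lambda}$ with functions on the node set, but for this proposition that composition is a genuine simplification. Your care about well-definedness (the quadrature only holds on $\R[\underline{X}]_{2d+1}$, but $p q$ and $(p-q)^{2}$ for $p,q\in\R[\underline{X}]_{d}$ have degree at most $2d$) is exactly the point that matters, and it matches the paper's implicit use of $\Lambda=L$ on $\R[\underline{X}]_{2d}$. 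The paper's detour through $\R[\underline{X}]/U_{\Lambda}$ does buy something elsewhere: the isometry $\sigma_{1}$ and the identity $\dim(\R[\underline{X}]/U_{\Lambda})=N$ are reused heavily in Theorem \ref{main}, Corollary \ref{xu}, Theorem \ref{moller} and Lemma \ref{interpolation}, so the author sets up that machinery once. One small imprecision in your alternative sketch: there is no natural surjection $\R^{N}\twoheadrightarrow\R[\underline{X}]_{d}/\ker\varepsilon$; rather $\R[\underline{X}]_{d}/\ker\varepsilon\cong\operatorname{im}\varepsilon$ embeds into $\R^{N}$, and combined with the surjection $\R[\underline{X}]_{d}/\ker\varepsilon\twoheadrightarrow\R[\underline{X}]_{d}/(U_{L}\cap\R[\underline{X}]_{d})$ this gives $\dim T_{L}\leq N$; your primary argument is unaffected.
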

 \begin{proof}
Assume that $L$ has a quadrature rule on $\R[\underline{X}]_{2d+1}$ such that:
\begin{equation}
 L(p)=\sum_{i=1}^{N}\lambda_{i}p(a_{i})\nonumber
\end{equation} 
for every $p\in\R[\underline{X}]_{2d+1}$, where we can assume without loss of generality that the points $a_{1},\ldots,a_{N}\in\R^{n}$ are pairwise different and $\lambda_{1},\ldots,\lambda_{N}>0$ with $N<\infty$ for $N\in\N$. Let us set $\Lambda:=\sum_{i=1}^N\lambda_{i}\ev_{a_{i}}\in\R[\underline{X}]^{*}$. Then, the following linear map between euclidean vector spaces is an isometry:
\begin{equation}\label{sigma1}
\sigma_{1}:T_{L}\longrightarrow \frac{\R[\underline{X}]}{U_{\Lambda}}, \overline{p}^{L}\mapsto\overline{p}^{\Lambda}\text{ for every }p\in\R[\underline{X}]_{d}
\end{equation}

It is easy to see that is well defined since $U_{L}\subseteq U_{\Lambda}$. It holds also that $\sigma_{1}$ is  a linear isometry since, for all $p,q\in\R[\underline{X}]_{d}$:
\begin{center}
$\left\langle \overline{p}^{L},\overline{q}^{L}\right\rangle_{L}=L(pq)=\Lambda(pq)=\left\langle \overline{p}^{\Lambda},\overline{q}^{\Lambda} \right\rangle_{\Lambda}=\langle \sigma_{1}(\overline{p}^{L}),\sigma_{1}(\overline{q}^{L}) \rangle_{\Lambda}$
\end{center} 
Since, $\sigma_{1}$ is a linear isometry is inmmediately injective, and then:
\begin{center}
 $\dim (T_{L})\leq \dim (\frac{\R[\underline{X}]}{U_{\Lambda}})$
 \end{center}
And now we can apply the Proposition \ref{zero}, to conclude the proof.
\end{proof}

\begin{defi}\label{gaussian}
A quadrature rule for $L$ on $\R[\underline{X}]_{2d+1}$  with minimal number of nodes, that is to say with $\dim (T_{L})$ nodes is called a Gaussian quadrature rule. 
\end{defi} 

\begin{lem}\label{fields}
Assume that the truncated multiplication operators commute. Then for all $p\in\R[\underline{X}]_{d+1}$ we have the following equality:
\begin{equation}\label{key}
p(M_{L,1},\ldots,M_{L,n})(\overline{1}^{L})=\Pi_{L}(\overline{p}^{L})
\end{equation}
\end{lem}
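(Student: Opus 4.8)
The plan is to prove the identity by induction on $\deg p$, using the fact that the truncated GNS multiplication operators are endomorphisms of $T_L$ and that commutativity is exactly what is needed to make the substitution $p(M_{L,1},\dots,M_{L,n})$ well defined and compatible with polynomial multiplication on the truncation level.

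\textbf{Base case.} For $\deg p \le 0$, i.e. $p$ constant, say $p = c$, both sides equal $c\,\overline{1}^L$ because $\Pi_L$ fixes $\overline{1}^L$ (as $1 \in \R[\underline{X}]_d$, so $\overline{1}^L \in T_L$ and $\Pi_L(\overline{1}^L) = \overline{1}^L$). For $\deg p = 1$ one checks directly from the definition \ref{mo}: if $p = c + \sum_i c_i X_i$, then $p(M_{L,1},\dots,M_{L,n})(\overline{1}^L) = c\,\overline{1}^L + \sum_i c_i M_{L,i}(\overline{1}^L) = c\,\overline{1}^L + \sum_i c_i \Pi_L(\overline{X_i}^L) = \Pi_L(\overline{p}^L)$, the last step using linearity of $\Pi_L$ and of the class map, together with $\overline{1}^L \in T_L$.

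\textbf{Inductive step.} Suppose the identity \eqref{key} holds for all polynomials of degree at most $k$ with $1 \le k \le d$, and let $p \in \R[\underline{X}]_{d+1}$ with $\deg p = k+1$. Write $p = \sum_j X_{i_j} q_j$ (plus possibly a lower-degree remainder handled by linearity and the induction hypothesis) with each $q_j \in \R[\underline{X}]_{k} \subseteq \R[\underline{X}]_d$. By linearity it suffices to treat $p = X_i q$ with $q \in \R[\underline{X}]_d$ of degree $k \le d$. Then, by definition of the operator substitution (which is unambiguous precisely because the $M_{L,i}$ commute) and the induction hypothesis applied to $q$,
\begin{equation*}
p(M_{L,1},\dots,M_{L,n})(\overline{1}^L) = M_{L,i}\bigl(q(M_{L,1},\dots,M_{L,n})(\overline{1}^L)\bigr) = M_{L,i}\bigl(\Pi_L(\overline{q}^L)\bigr).
\end{equation*}
Now $\Pi_L(\overline{q}^L) = \overline{q}^L$ since $q \in \R[\underline{X}]_d$, so this equals $M_{L,i}(\overline{q}^L) = \Pi_L(\overline{X_i q}^L) = \Pi_L(\overline{p}^L)$, as desired.

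\textbf{Main obstacle.} The delicate point is the well-definedness and associativity of the expression $p(M_{L,1},\dots,M_{L,n})$ as an endomorphism of $T_L$: a priori $M_{L,i}$ maps $T_L$ to $T_L$ but is built from multiplication by $X_i$ followed by a projection, so iterating it does not obviously correspond to multiplication by a monomial before projecting. Commutativity of the operators guarantees that any way of writing $p$ as a polynomial in the $M_{L,i}$ yields the same operator, so the substitution $p(M_{L,1},\dots,M_{L,n})$ is unambiguous; this is what lets the induction go through when we split $p = X_i q$. One should also take care that in the inductive step the factor $q$ can be chosen in $\R[\underline{X}]_d$ (not merely $\R[\underline{X}]_{d+1}$), which is automatic since $\deg q = \deg p - 1 \le d$; this is exactly why the hypothesis $p \in \R[\underline{X}]_{d+1}$ (and not a larger degree) is imposed, so that $\Pi_L$ acts as the identity on the intermediate classes $\overline{q}^L$.
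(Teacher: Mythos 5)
Your proof is correct and follows essentially the same route as the paper: induction on the degree, writing $p=X_{i}q$ with $q\in\R[\underline{X}]_{d}$, using that $\Pi_{L}$ is the identity on $T_{L}$ so the induction hypothesis gives $q(M_{L,1},\ldots,M_{L,n})(\overline{1}^{L})=\overline{q}^{L}$, and then concluding via $M_{L,i}(\overline{q}^{L})=\Pi_{L}(\overline{X_{i}q}^{L})$. Your explicit remark that commutativity is what makes the substitution $p(M_{L,1},\ldots,M_{L,n})$ unambiguous is a point the paper leaves implicit, but the argument is the same.
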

\begin{proof}
Let $p=\underline{X}^{\alpha}$ for $\alpha\in\N^{n}$ with $|\alpha|\leq d+1$. We continue the proof by induction on $|\alpha|$:
\begin{itemize}
\item For  $|\alpha|=0$, we have that $\underline{X}^{\alpha}=1$  then:
\begin{center}
$ 1(M_{L,1},\ldots,M_{L,n})(\overline{1}^{L})=\id_{V_{L}}(\overline{1}^{L})=\overline{1}^{L}=\Pi_{L}(\overline{1}^{L})$
\end{center}
\item Let assume the statement is true for $|\alpha|=d$. Let us show it is also true for $|\alpha|=d+1$. Let $p=X_{i}q$ for some $i\in\{1,\ldots,n\}$ and $q=\underline{X}^{\beta}$ with $|\beta|=d$, then
$\Pi_{L}(\overline{q}^{L})=\overline{q}^{L}$ since $\overline{q}^{L}\in T_{L}$, and we have:
\begin{align}
p(M_{L,1},\ldots,M_{L,n})(\overline{1}^{L})&\nonumber=(M_{L,i}\circ q(M_{L,1},\ldots,M_{L,n}))(\overline{1}^{L})=\\ \nonumber
M_{L,i}(q(M_{L,1},\ldots,M_{L,n})(\overline{1}^{L}))&=M_{L,i}(\overline{q}^{L})=\Pi_{L}(\overline{X_{i}q}^{L})=\Pi_{L}(\overline{p}^{L}) \nonumber
\end{align}
\end{itemize}
since we have proved \eqref{key} for monomials then by the linearity of the orthogonal projection \eqref{key} is also true for polynomials.
\end{proof}

\begin{teor}\label{importante}
Assume the truncated multiplication operators of $L$ commute, and consider the set:
\begin{equation}\label{semiraro}
G_{L}:=\{\sum_{i=1}^{s}p_{i}q_{i}\text{ }|\text{ }s\in\N\text{, }p_{i}\in\R[\underline{X}]_{d+1}\text{ and } q_{i}\in\R[\underline{X}]_{d}+U_{L} \} 	
\end{equation}
then there exists a quadrature rule for $L$ on $G_{L}$ with $\dim(T_{L})$ many nodes.
\end{teor}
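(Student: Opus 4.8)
The plan is to diagonalize the commuting family of self-adjoint operators $M_{L,1},\dots,M_{L,n}$ on the finite-dimensional Euclidean space $T_L$ and read off the quadrature rule from the joint spectral decomposition. By Reminder~\ref{st} (applied after picking any orthonormal basis of $T_L$), there is an orthonormal basis $\overline{v_1}^{L},\dots,\overline{v_r}^{L}$ of $T_L$, with $r=\dim(T_L)$, consisting of simultaneous eigenvectors: for each $j$ there is a point $a_j=(a_{j,1},\dots,a_{j,n})\in\R^n$ with $M_{L,i}(\overline{v_j}^{L})=a_{j,i}\,\overline{v_j}^{L}$ for all $i$. Consequently, for every $q\in\R[\underline{X}]$, $q(M_{L,1},\dots,M_{L,n})(\overline{v_j}^{L})=q(a_j)\,\overline{v_j}^{L}$, so the operator $p(M_{L,1},\dots,M_{L,n})$ has eigenvalues $p(a_j)$.

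Next I would compute $L(g)$ for $g\in G_L$ using Lemma~\ref{fields}. Write $g=\sum_{i=1}^{s}p_iq_i$ with $p_i\in\R[\underline{X}]_{d+1}$ and $q_i\in\R[\underline{X}]_{d}+U_L$; since $L$ is linear and $U_L$ contributes nothing, it suffices to handle $g=pq$ with $p\in\R[\underline{X}]_{d+1}$ and $q\in\R[\underline{X}]_d$. Then
\begin{equation*}
L(pq)=\langle \overline{p}^{L},\overline{q}^{L}\rangle_L=\langle \Pi_L(\overline{p}^{L}),\overline{q}^{L}\rangle_L=\langle p(M_{L,1},\dots,M_{L,n})(\overline{1}^{L}),\, q(M_{L,1},\dots,M_{L,n})(\overline{1}^{L})\rangle_L,
\end{equation*}
where the second equality uses that $\overline{q}^{L}\in T_L$ and $\Pi_L$ is self-adjoint and idempotent, the third uses Lemma~\ref{fields} twice (noting $q\in\R[\underline{X}]_d\subseteq\R[\underline{X}]_{d+1}$ so $\Pi_L(\overline{q}^{L})=\overline{q}^{L}$). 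Expanding $\overline{1}^{L}=\sum_{j=1}^r c_j\overline{v_j}^{L}$ in the eigenbasis, the right-hand side becomes $\sum_{j=1}^r c_j^2\, p(a_j)q(a_j)=\sum_{j=1}^r c_j^2\,(pq)(a_j)$. Setting $\lambda_j:=c_j^2$ and $N:=\{a_j : \lambda_j\neq 0\}$ (merging any repeated points, adding their weights) gives $L(g)=\sum_{x\in N}w(x)g(x)$ for all $g\in G_L$, with $|N|\le r=\dim(T_L)$.

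To finish I need $\lambda_j>0$ and $|N|=\dim(T_L)$, i.e.\ all $c_j\neq 0$ and the $a_j$ pairwise distinct. Positivity of the weights: if some $c_j=0$, then $\overline{v_j}^{L}\perp\overline{1}^{L}$, and more is true—since $M_{L,i}$ are self-adjoint and $\overline{v_j}^{L}$ is a joint eigenvector, $\langle q(M_{L,1},\dots,M_{L,n})(\overline{1}^{L}),\overline{v_j}^{L}\rangle_L = q(a_j)\langle\overline{1}^{L},\overline{v_j}^{L}\rangle_L=0$ for all $q$, so by Lemma~\ref{fields} $\overline{v_j}^{L}\perp T_L$, contradicting $\overline{v_j}^{L}\in T_L$, $\overline{v_j}^{L}\ne 0$; hence every $c_j\ne 0$ and $\lambda_j=c_j^2>0$. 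Distinctness of the nodes is the one genuinely delicate point: if $a_j=a_k$ for $j\neq k$, then $\overline{v_j}^{L}$ and $\overline{v_k}^{L}$ lie in the same joint eigenspace, and I would argue that the map $\overline{p}^{L}\mapsto\big(p(a_1),\dots\big)$, or equivalently the evaluation functionals, separate a basis of $T_L$—concretely, using that $\{p(M_{L,1},\dots,M_{L,n})(\overline{1}^{L}) : p\in\R[\underline{X}]_d\}$ spans $T_L$ (Lemma~\ref{fields}) together with the fact that this vector depends on $p$ only through the values $(p(a_1),\dots,p(a_r))$, forces the $a_j$ to be pairwise distinct, for otherwise $\dim T_L$ would be strictly less than $r$. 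This counting step is where I expect to spend the most care; the rest is the routine spectral-theorem bookkeeping above.
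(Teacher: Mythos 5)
Your core argument coincides with the paper's: joint diagonalization of the commuting self-adjoint operators via Reminder \ref{st}, expansion of $\overline{1}^{L}$ in the joint eigenbasis, and the identity $L(pq)=\langle \Pi_{L}(\overline{p}^{L}),\overline{q}^{L}\rangle_{L}=\sum_{j}c_{j}^{2}p(a_{j})q(a_{j})$ obtained from Lemma \ref{fields}. Where you genuinely diverge is the endgame. The paper settles pairwise distinctness of the nodes (and, implicitly, positivity of the weights) in one stroke by invoking the lower bound of Proposition \ref{fbound}: any quadrature rule on $\R[\underline{X}]_{2d+1}$ needs at least $\dim T_{L}$ nodes, so dropping zero weights or merging coinciding nodes would contradict minimality. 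You instead argue directly: your orthogonality argument for $c_{j}\neq 0$ (if $c_{j}=0$ then $\langle q(M_{L,1},\ldots,M_{L,n})\overline{1}^{L},\overline{v_{j}}^{L}\rangle_{L}=q(a_{j})c_{j}=0$ for all $q\in\R[\underline{X}]_{d}$, so $\overline{v_{j}}^{L}\perp T_{L}$, impossible) is correct and in fact more explicit than the paper, which never comments on why $\lambda_{j}=b_{j}^{2}>0$. Your distinctness step is only sketched, but the idea does close: if $a_{j}=a_{k}$ for $j\neq k$, then $\{(p(a_{1}),\ldots,p(a_{r})):p\in\R[\underline{X}]_{d}\}$ lies in a subspace of $\R^{r}$ of dimension at most the number of distinct nodes, and since $T_{L}=\{p(M_{L,1},\ldots,M_{L,n})\overline{1}^{L}:p\in\R[\underline{X}]_{d}\}$ is the image of that subspace under the linear map $(t_{1},\ldots,t_{r})\mapsto\sum_{j}c_{j}t_{j}\overline{v_{j}}^{L}$, one would get $\dim T_{L}<r$, a contradiction. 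So your route is self-contained and avoids Propositions \ref{fbound} and \ref{zero}, at the price of these two extra arguments; the paper's route is shorter but leans on the node-count bound. One small point to tighten: your reduction ``it suffices to handle $q\in\R[\underline{X}]_{d}$ since $U_{L}$ contributes nothing'' only addresses the $L$-side; the quadrature sum must also vanish on $pu$ with $u\in U_{L}$, i.e.\ you need $u(a_{j})=0$ for all $j$. This does follow from $u(M_{L,1},\ldots,M_{L,n})\overline{1}^{L}=\Pi_{L}(\overline{u}^{L})=0$ together with your $c_{j}\neq 0$, or you can simply skip the reduction: your chain of equalities works verbatim for $q\in\R[\underline{X}]_{d}+U_{L}$, exactly as in the paper, because $\overline{q}^{L}$ still lies in $T_{L}$.
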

\begin{proof}
Since the truncated multiplication operators of $L$ commute by the Remeinder \ref{st} there exists an orthonormal basis $v:=\{v_{1},\ldots,v_{N}\}$ of $T_{L}$ consisting of common eigenvectors of the GNS truncated multiplation operators of $L$. That is to say, there exist $a_{1},\ldots,a_{N}\in\R^{n}$ such that:
\begin{center}
$M_{L,i}v_{j}=a_{j,i}v_{j}$ for all $i\in\{1,\ldots,n\}$ and $j\in\{1,\ldots,N\}$
\end{center}
 where $N:=\dim(T_{L})$. Since it always holds $\overline{1}^{L}\in T_{L}$ since $d\in\N$, we can write:
\begin{equation}\label{pesos}
 \overline{1}^{L}=b_{1}v_{1}+\cdots+b_{N}v_{N}
\end{equation}
for some $b_{1},\ldots,b_{N}\in\R$. Let us define $\lambda_{i}:=b_{i}^{2}$ for all $i\in\{1,\ldots,N\}$. 
Let $g=pq$ such that $p\in\R[\underline{X}]_{d+1}$ and $q\in\R[\underline{X}]_{d}+U_{L}$, then using Lemma \ref{fields} we have the two equalities: 
\begin{equation}\label{madremia}
\Pi_{L}(\overline{p}^{L})=p(M_{L,1},\ldots,M_{L,n})(\overline{1}^{L})\text{ and }\overline{q}^{L}=q(M_{L,1},\ldots,M_{L,n})(\overline{1}^{L})
\end{equation}
Using this equalities \eqref{madremia}, using that the orthogonal projection $\Pi_{L}$ is selfadjoint, using that $\{v_{1},\ldots,v_{N}\}$ is an orthonormal basis of $T_{L}$ consisting of common eigenvectors  of the GNS truncated multiplication operators of $L$ and also using the equation \eqref{pesos}, with the same idea as we got the reformulation of the problem in \eqref{newformope} we have:

\begin{align}
L(g)\nonumber =L(pq)=\langle \overline{p}^{L},\overline{q}^{L} \rangle_{L}\stackrel{\overline{q}^{L}\in T_{L}}{=}
\langle \overline{p}^{L},\Pi_{L}(\overline{q}^{L})\rangle_{L}=& \\ \nonumber 
\langle \Pi_{L}(\overline{p}^{L}),\overline{q}^{L} \rangle_{L} 
=\sum_{j=1}^{N}b_{j}^{2}p(a_{j})q(a_{j})=\sum_{j=1}^{N}\lambda_{j}p(a_{j})q(a_{j})&
\end{align}
Then by linearity it holds that $L(p)=\sum_{i=1}^{N}\lambda_{i}p(a_{i})$ for all $p\in G_{L}$. It remains to prove that the nodes of the quadrature rule for $L$ that we got, $a_{1},\ldots,a_{N}\in\R^{n}$ are pairwise different, but this is true since $N=\dim T_{L}$ is the minimal possible number of nodes for a quadrature rule on $\R[\underline{X}]_{2d+1}$ as we proved in \ref{fbound}.
 
\end{proof}

\begin{rem}\label{particular}
Since $\R[\underline{X}]_{2d+1}\subseteq G_{L}$, in the conditions of Theorem \ref{importante} we got in particular a Gaussian quadrature rule  for the linear form $L$.
\end{rem}

\begin{cor}
Let $n=1$, i.e. $L\in\R[X]^{*}_{2d+2}$ with $L(\sum\R[X]^{2})\geq 0$. Then $L$ has a quadrature rule on $G_{L}$ \eqref{semiraro}.
\end{cor}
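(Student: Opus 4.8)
The plan is to recognize this corollary as nothing more than the $n=1$ instance of Theorem \ref{importante}. The observation that makes it immediate is that when there is a single variable $X$, the family of truncated GNS multiplication operators of $L$ consists of the single self-adjoint endomorphism $M_{L,1}\colon T_L\to T_L$, and a single operator commutes with itself trivially (the commutativity condition ``$M_{L,i}M_{L,j}=M_{L,j}M_{L,i}$ for all $i,j$'' is vacuous when there is only one index). Hence the sole hypothesis of Theorem \ref{importante} is satisfied automatically, with nothing to check.

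Concretely, I would simply invoke Theorem \ref{importante} with $n=1$: it produces pairwise distinct nodes $a_1,\dots,a_N\in\R$ with $N=\dim(T_L)$ and weights $\lambda_1,\dots,\lambda_N>0$ such that $L(g)=\sum_{i=1}^{N}\lambda_i g(a_i)$ for every $g\in G_L$, where $G_L$ is the set \eqref{semiraro}. By Remark \ref{particular}, since $\R[X]_{2d+1}\subseteq G_L$, this is in particular a Gaussian quadrature rule for $L$. There is no real obstacle here; the only thing worth emphasizing is that the commutativity requirement, which is the genuinely restrictive hypothesis in the multivariate setting, degenerates to a tautology in one variable, so the existence of a (Gaussian) quadrature rule on $G_L$ for a positive semidefinite univariate truncated linear form is unconditional. (In the edge case $d=\infty$ one falls back on Remark \ref{GNSmodule}, where commutativity of the multiplication operators is again automatic, so the same argument applies.)
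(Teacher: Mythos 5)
Your proposal is correct and is essentially the paper's own proof: since there is only one truncated GNS multiplication operator in the univariate case, the commutativity hypothesis of Theorem \ref{importante} holds trivially, and that theorem yields the quadrature rule on $G_{L}$. The extra remarks (minimality via Remark \ref{particular}, the $d=\infty$ case) are harmless additions but not needed.
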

\begin{proof}
$L$ has one truncated GNS multiplication operator, therefore the hypothesis of Theorem \ref{importante} holds and there is a quadrature rule on $G_{L}$ for $L$.
\end{proof}

\begin{prop}\label{flat}
The following assertions are equivalent:
\begin{enumerate}[label=(\roman*)]
\item $\R[\underline{X}]_{d+1}=\R[\underline{X}]_{d}+U_{L}$
\item $T_{L}=V_{L}$
\item For all $\alpha\in\N_{0}^{n}$ with $|\alpha|=d+1$, there exists $p\in\R[\underline{X}]_{d}$ such that $\underline{X}^{\alpha}-p\in U_{L}$
\item The canonical map:
\begin{equation}\label{canonical}
V_{L^{'}}=\R[\underline{X}]_{d}/ U_{L^{'}}\hookrightarrow \R[\underline{X}]_{d+1} /U_{L}=V_{L} 
\end{equation}
is an isomorphism.
\item $\dim(V_{L^{'}})=\dim(V_{L})$
\item The moment  matrices $(L(\underline{X}^{\alpha+\beta}))_{|\alpha|,|\beta|\leq d}$ and $(L(\underline{X}^{\alpha+\beta}))_{|\alpha|,|\beta|\leq d+1}$ have the same rank.
\item $M_{L}=\widetilde{M_{L}}$.
\end{enumerate}
\end{prop}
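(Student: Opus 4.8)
The plan is to split the seven conditions into a group (i)--(vi) that are mutually equivalent by unwinding the truncated GNS construction, and then to attach (vii) by a short block-matrix argument (I restrict to $d\in\N_{0}$, the case $d=\infty$ being trivial). The one preliminary fact I would record first is $U_{L'}=U_{L}\cap\R[\underline{X}]_{d}$: by the Cauchy--Schwarz inequality of Proposition \ref{nucleo0}, both sides equal $\{p\in\R[\underline{X}]_{d}\mid L(p^{2})=0\}$. Hence $\overline{p}^{L'}\mapsto\overline{p}^{L}$ is a well-defined injective linear isometry $V_{L'}\hookrightarrow V_{L}$ with image exactly $T_{L}=\{\overline{p}^{L}\mid p\in\R[\underline{X}]_{d}\}$; this is the canonical map \eqref{canonical}, and it is nothing but the inverse of the isometry $\sigma_{L}$ of Proposition \ref{dimension} followed by the inclusion $T_{L}\subseteq V_{L}$.

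Given this, the equivalences among (i)--(vi) are bookkeeping. Since $T_{L}$ is the image of $\R[\underline{X}]_{d}$ under the quotient map $\R[\underline{X}]_{d+1}\to V_{L}=\R[\underline{X}]_{d+1}/U_{L}$, we have $T_{L}=V_{L}$ iff $\R[\underline{X}]_{d}+U_{L}=\R[\underline{X}]_{d+1}$, and this holds iff every degree-$(d+1)$ monomial lies in $\R[\underline{X}]_{d}+U_{L}$ (because $\R[\underline{X}]_{d+1}$ is spanned by $\R[\underline{X}]_{d}$ together with those monomials and $\R[\underline{X}]_{d}+U_{L}$ is a subspace containing $\R[\underline{X}]_{d}$); this gives (i)$\iff$(ii)$\iff$(iii). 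The embedding $V_{L'}\hookrightarrow V_{L}$ is an isomorphism iff its image $T_{L}$ is all of $V_{L}$, so (ii)$\iff$(iv); being an injective linear map of finite-dimensional spaces, it is onto iff $\dim V_{L'}=\dim V_{L}$, so (iv)$\iff$(v). Finally the Gram matrix $(L(\underline{X}^{\alpha+\beta}))_{|\alpha|,|\beta|\le d}$ of the form $(p,q)\mapsto L(pq)$ on $\R[\underline{X}]_{d}$ has radical $U_{L'}$ and hence rank $\dim V_{L'}$, and likewise $\rk (L(\underline{X}^{\alpha+\beta}))_{|\alpha|,|\beta|\le d+1}=\dim V_{L}$; this is (v)$\iff$(vi). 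So (i)--(vi) are all equivalent.

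To fold in (vii), write $M_{L}=\left(\begin{smallmatrix}M_{L'}&C_{L}\\ C_{L}^{T}&D\end{smallmatrix}\right)$ in block form relative to the splitting of monomials into degrees $\le d$ and $=d+1$, and recall from Lemma \ref{usefull} that $\widetilde{M_{L}}$ has the same blocks except that $D$ is replaced by $W_{L}^{T}M_{L'}W_{L}$, where $M_{L'}W_{L}=C_{L}$. For (vii)$\Rightarrow$(vi): from the factorization $\widetilde{M_{L}}=PP^{T}$ with $P=\left(\begin{smallmatrix}C&0\\ W_{L}^{T}C&0\end{smallmatrix}\right)$ and $M_{L'}=CC^{T}$ of Lemma \ref{usefull}, the columns of $P$ have the same linear dependence relations as those of $C$, so $\rk\widetilde{M_{L}}=\rk C=\rk M_{L'}$ (using $\rk CC^{T}=\rk C$); hence $M_{L}=\widetilde{M_{L}}$ forces $\rk M_{L}=\rk M_{L'}$, which is (vi). For (vi)$\Rightarrow$(vii): $\rk M_{L}=\rk M_{L'}$ forces $\rk M_{L}=\rk\left(\begin{smallmatrix}M_{L'}\\ C_{L}^{T}\end{smallmatrix}\right)$, so every column of $M_{L}$ lies in the span of its first $s_{d}$ columns; choosing $W$ with $\left(\begin{smallmatrix}C_{L}\\ D\end{smallmatrix}\right)=\left(\begin{smallmatrix}M_{L'}\\ C_{L}^{T}\end{smallmatrix}\right)W$ gives $C_{L}=M_{L'}W$ (so $W$ is an admissible choice of $W_{L}$) and $D=C_{L}^{T}W$; since $\widetilde{M_{L}}$ does not depend on the choice of $W_{L}$, its lower-right block is $W^{T}M_{L'}W=W^{T}C_{L}$, which is symmetric, hence equals $C_{L}^{T}W=D$, so $M_{L}=\widetilde{M_{L}}$.

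The routine part is the chain (i)--(vi); the place that needs the most care is the equivalence (vi)$\iff$(vii), in particular the direction (vi)$\Rightarrow$(vii), where one must see that the rank coincidence actually forces the replacement block $W_{L}^{T}M_{L'}W_{L}$ to reproduce $D$. The clean way I would do this is exactly as above, leaning on the well-definedness of $\widetilde{M_{L}}$ from Lemma \ref{usefull} and on the symmetry identity $W^{T}C_{L}=(W^{T}C_{L})^{T}=C_{L}^{T}W$, rather than invoking a general Schur-complement rank formula.
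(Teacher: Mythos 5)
Your proof is correct, and for the chain (i)--(vi) it follows essentially the same route as the paper: the identity $U_{L'}=U_{L}\cap\R[\underline{X}]_{d}$ via Cauchy--Schwarz, the bookkeeping identifications $T_{L}=V_{L}\iff\R[\underline{X}]_{d}+U_{L}=\R[\underline{X}]_{d+1}$, and the observation that the two moment matrices have ranks $\dim V_{L'}$ and $\dim V_{L}$ because their radicals are $U_{L'}$ and $U_{L}$. The genuine difference is at (vi)$\iff$(vii): the paper disposes of this step with a bare chain of equivalences, $\rk M_{L'}=\rk M_{L}\iff W_{L}^{T}M_{L'}W_{L}=C_{L}\iff\widetilde{M_{L}}=M_{L}$, implicitly leaning on the Smul'jan-type flat-extension fact for positive semidefinite block matrices cited in the introduction, whereas you actually prove it: for (vi)$\Rightarrow$(vii) you observe that rank equality forces the column space of $M_{L}$ to coincide with that of its first $s_{d}$ columns, extract $W$ with $C_{L}=M_{L'}W$ and $D=C_{L}^{T}W$, and close with the symmetry identity $W^{T}M_{L'}W=W^{T}C_{L}=(W^{T}C_{L})^{T}=C_{L}^{T}W=D$, invoking the well-definedness of $\widetilde{M_{L}}$ from Lemma \ref{usefull}; for (vii)$\Rightarrow$(vi) you read off $\rk\widetilde{M_{L}}=\rk M_{L'}$ from the factorization in Lemma \ref{usefull}. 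This buys a self-contained, purely linear-algebraic argument for the only nontrivial implication, and as a small bonus your (vi)$\Rightarrow$(vii) direction uses only the symmetry of $M_{L}$, positive semidefiniteness entering solely through Lemma \ref{usefull} (well-definedness of $\widetilde{M_{L}}$ and the factorization used in the converse), while the paper's terser version keeps the proposition shorter at the cost of leaving that step to the reader or to the cited reference.
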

\begin{proof}
Note that the map \eqref{canonical} it is well defined since $\R[\underline{X}]_{d}\cap U_{L}=U_{L'}$. And one can see inmediately that:
\begin{center}
$(i)\iff(ii)\iff(iii)\iff(iv)\iff(v)$.
\end{center}

Let us show $(v)\iff(vi)$: $(L(\underline{X}^{\alpha+\beta}))_{|\alpha|,|\beta|\leq d+1}$ is the transformation matrix (or the associated matrix) of the bilinear form:
\begin{center}
$\R[\underline{X}]_{d+1}\times\R[\underline{X}]_{d+1} \longrightarrow \R$,$(p,q)\mapsto L(pq)$
\end{center}
with respect to the the standard monomial basis, and therefore it is also the transformation matrix (or the associated matrix) of the linear map:
\begin{equation}\label{lumin}
\R[\underline{X}]_{d+1}\longrightarrow\R[\underline{X}]^{*}_{d+1},p\mapsto (q\mapsto L(pq))
\end{equation}
with respect to the corresponding dual basis of the standard monomial basis. The kernel of this linear map \eqref{lumin} is $U_{L}$, in consequence:
\begin{center}
$\rk((L(\underline{X}^{\alpha+\beta}))_{|\alpha|,|\beta|\leq d+1})=\dim\R[\underline{X}]_{d+1}-U_{L}=\dim V_{L}$
\end{center}
reasoning in the same way:
\begin{equation}
\rk((L(\underline{X}^{\alpha+\beta}))_{|\alpha|,|\beta|\leq d})=\dim V_{L^{'}}\nonumber
\end{equation}
Finally $(vi)\iff(vii)$:
\begin{align}
\rk(&(L(\underline{X}^{\alpha+\beta}))_{|\alpha|,|\beta|\leq d})\iff \rk(M_{L'})=\rk(M_{L}) \nonumber\\ \nonumber
\iff&\left(
\begin{array}{c|c}
\makebox{$M_{L'}$}&\makebox{$M_{L'}W_{L}$}\\
\hline
  \vphantom{\usebox{0}}\makebox{$W^{T}_{L}M_{L'}$}&\makebox{$W^{T}_{L}M_{L'}W_{L}$}
\end{array}
\right)=\left(
\begin{array}{c|c}
\makebox{$M_{L'}$}&\makebox{$M_{L'}W_{L}$}\\
\hline
  \vphantom{\usebox{0}}\makebox{$W^{T}_{L}M_{L'}$}&\makebox{$C_{L}$}
\end{array}
\right) \\ \nonumber
\iff& W_{L}M_{L'}W_{L}=C_{L}\iff\widetilde{M_{L}}=M_{L} \\ \nonumber
\end{align}
\end{proof}

\begin{defi}\label{definiciones}
We say the linear form $L$ is flat if the conditions  $(i)$, $(ii)$, $(iii)$, $(iv)$, $(v)$, $(vi)$ and $(vii)$  in \eqref{flat} are satisfied.
\end{defi}

\begin{prop}\label{importante2}
Suposse $L$ is flat then the truncated GNS operators of $L$ commute
\end{prop}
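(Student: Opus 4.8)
The plan is to exploit the reformulation of flatness in Proposition \ref{flat} that collapses the truncation: if $L$ is flat then $T_L=V_L$ by condition (ii), so the orthogonal projection $\Pi_L\colon V_L\to T_L$ occurring in Definition \ref{mo} is just the identity $\id_{V_L}$. (If $d=\infty$ there is nothing to do, since by Remark \ref{GNSmodule} the construction is the commutative quotient ring $\R[\underline{X}]/U_L$; so assume $d\in\N_0$.) Consequently, for every $p\in\R[\underline{X}]_d$ and every $i\in\{1,\dots,n\}$ the operator acts by honest multiplication,
\[
M_{L,i}(\overline{p}^{L})=\Pi_L(\overline{X_ip}^{L})=\overline{X_ip}^{L}\in V_L=T_L ,
\]
which is meaningful because $X_ip\in\R[\underline{X}]_{d+1}$.

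Next I would compute the ``matrix entries'' of $M_{L,i}M_{L,j}$ against the spanning set $\{\overline{q}^{L}\mid q\in\R[\underline{X}]_d\}=T_L$. Fix $p,q\in\R[\underline{X}]_d$. Using that $M_{L,i}$ is a self-adjoint endomorphism of $T_L$, then the displayed formula twice, and then the definition of the GNS inner product,
\[
\langle M_{L,i}M_{L,j}\overline{p}^{L},\overline{q}^{L}\rangle_L=\langle M_{L,j}\overline{p}^{L},M_{L,i}\overline{q}^{L}\rangle_L=\langle \overline{X_jp}^{L},\overline{X_iq}^{L}\rangle_L=L\big(X_jp\cdot X_iq\big)=L\big(X_iX_jpq\big),
\]
where the final product lies in $\R[\underline{X}]_{2d+2}$, so $L$ and the GNS inner product are legitimately applied. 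This quantity is symmetric under exchanging $i$ and $j$, hence
\[
\langle M_{L,i}M_{L,j}\overline{p}^{L},\overline{q}^{L}\rangle_L=\langle M_{L,j}M_{L,i}\overline{p}^{L},\overline{q}^{L}\rangle_L\qquad\text{for all }p,q\in\R[\underline{X}]_d .
\]

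Finally, since $\langle\,\cdot\,,\,\cdot\,\rangle_L$ is positive definite on $T_L$ and the classes $\overline{q}^{L}$ with $q\in\R[\underline{X}]_d$ exhaust $T_L$, the element $(M_{L,i}M_{L,j}-M_{L,j}M_{L,i})(\overline{p}^{L})\in T_L$ is orthogonal to all of $T_L$ and therefore zero, for every $p\in\R[\underline{X}]_d$; as these classes span $T_L$ we conclude $M_{L,i}M_{L,j}=M_{L,j}M_{L,i}$ on $T_L$, i.e. the truncated GNS operators commute. I do not expect a serious obstacle: the only subtlety is that in the truncated setting $U_L$ need not be an ideal, so one cannot argue as in Remark \ref{GNSmodule} that some quotient ring is commutative; the role of flatness is precisely to make $\Pi_L$ trivial, after which commutativity is forced by the symmetry of the polynomial product inside $L$. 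One should just be careful with the degree bookkeeping at each application of $L$, which is the only place a sloppy version of this argument could break down.
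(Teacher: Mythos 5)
Your proof is correct, and it takes a mildly but genuinely different route from the paper's. You invoke flatness in the form $(ii)$ of Proposition \ref{flat}, namely $T_{L}=V_{L}$, which makes the projection $\Pi_{L}$ the identity, so that $M_{L,i}(\overline{p}^{L})=\overline{X_{i}p}^{L}$ outright; you then use the already established self-adjointness of the $M_{L,i}$ to move one operator across the inner product, obtaining $\langle M_{L,i}M_{L,j}\overline{p}^{L},\overline{q}^{L}\rangle_{L}=L(X_{i}X_{j}pq)$, whose symmetry in $i,j$ together with positive definiteness of $\langle\,\cdot\,,\,\cdot\,\rangle_{L}$ on $T_{L}$ kills the commutator (a ``weak'' vanishing argument tested against the spanning set $\{\overline{q}^{L}\mid q\in\R[\underline{X}]_{d}\}$). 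The paper instead uses flatness in the form $(i)$, writing $X_{i}p=p_{1}+q_{1}$ and $X_{j}p=p_{2}+q_{2}$ with $p_{1},p_{2}\in\R[\underline{X}]_{d}$ and $q_{1},q_{2}\in U_{L}$, keeps $\Pi_{L}$ in play, and shows directly that the single vector $\overline{g}^{L}:=\Pi_{L}(\overline{X_{i}p_{2}-X_{j}p_{1}}^{L})$, which is exactly the commutator applied to $\overline{p}^{L}$, satisfies $L(g^{2})=0$, using self-adjointness of $\Pi_{L}$ rather than of the operators. Both arguments ultimately rest on the same fact, the symmetry of $L$ applied to products of degree at most $2d+2$, but yours buys a shorter computation and avoids the decomposition bookkeeping, at the cost of leaning on the self-adjointness proposition for the $M_{L,i}$, while the paper's version is self-contained at that point and exhibits the commutator's image explicitly as an element of zero norm. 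Your degree bookkeeping is fine: $X_{j}p,\,X_{i}q\in\R[\underline{X}]_{d+1}$, so every application of $L$ and of the GNS inner product is legitimate, and your remark that $U_{L}$ need not be an ideal correctly identifies why flatness (and not a ring-theoretic argument as in Remark \ref{GNSmodule}) is what forces commutativity here.
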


\begin{proof}
Asumme $L$ is flat, and let $i,j\in\{1,\ldots,n\}$ and $p\in\R[\underline{X}]_{d}$ . We want to prove:
\begin{equation}
M_{L,i}\circ M_{L,j}(\overline{p}^{L})= M_{L,j}\circ M_{L,i}(\overline{p}^{L})\nonumber
\end{equation}
Let us write $X_{i}p=p_{1}+q_{1}$ and $X_{j}p=p_{2}+q_{2}$ with $p_{1},p_{2}\in \R[\underline{X}]_{d}$
and $q_{1},q_{2}\in U_{L}$. Then $M_{L,j}(\overline{p}^{L})=\Pi_{L}(\overline{X_{j}p}^{L})=\Pi_{L}(\overline{p_{2}+q_{2}}^{L})=\Pi_{L}(\overline{p_{2}}^{L})+\Pi_{L}(\overline{q_{2}}^{L})=\Pi_{L}(\overline{p_{2}}^{L})=\overline{p_{2}}^{L}$. In the same way we get $M_{L,i}(\overline{p}^{L})=\overline{p_{1}}^{L}$. Therefore:
\begin{equation*}
\begin{aligned}
M_{L,i}\circ M_{L,j}(\overline{p}^{L})= M_{L,j}\circ M_{L,i}(\overline{p}^{L})\Longleftrightarrow & M_{L,i}(\overline{p_{2}}^{L})=M_{L,j}(\overline{p_{1}}^{L})&&\\
\Longleftrightarrow\Pi_{L}(\overline{X_{i}p_{2}}^{L})=\Pi_{L}(\overline{X_{j}p_{1}}^{L})&
&&\\
\end{aligned}
\end{equation*}
In other words, define $\overline{g}^{L}:=\Pi_{L}(X_{i}p_{2}-X_{j}p_{1})\in T_{L}$ for some $g\in\R[\underline{X}]_{d}$, then it is enough to show $g\in U_{L}$. Indeed:
\begin{align}
L\nonumber &(g^{2})=\langle \Pi_{L}(\overline{X_{i}p_{2}-X_{j}p_{1}}),\overline{g}^{L} \rangle_{L}=\langle \overline{X_{i}p_{2}-X_{j}p_{1}},\Pi_{L}(\overline{g}^{L})\rangle_{L} =\langle \overline{X_{i}p_{2}-X_{j}p_{1}},\overline{g}\rangle_{L}\\ \nonumber
=&L((X_{i}p_{2}-X_{j}p_{1})g)=L((X_{i}g)p_{2})-L((X_{j}g)p_{1})= 
\langle \overline{ X_{i}g}^{L},\overline{p_{2}}^{L}\rangle_{L}-\langle\overline{ X_{j}g}^{L},\overline{p_{1}}^{L} \rangle_{L} \\ \nonumber
&\stackrel[\overline{X_{i}p}^{L}=\overline{p_{1}}^{L}] {\overline{X_{j}p}^{L}=\overline{p_{2}}^{L}}{=}\langle\overline{ X_{i}g}^{L},\overline{X_{j}p}^{L} \rangle_{L}-\langle \overline{ X_{j}g}^{L},\overline{X_{i}p}^{L} \rangle_{L}=L(X_{i}gX_{j}p)-L(X_{j}gX_{i}p)=0
\end{align}
\end{proof}

Here we show some examples which shows that the reverse of Proposition \ref{importante2} does not hold.

\begin{ej}\label{elprimero}
The truncated GNS multiplication operators of the following linear form:
\begin{equation}
 L:\R[X_{1},X_{2}]_{4}\rightarrow \R,  p \mapsto \frac{1}{4}(p(0,0)+p(1,0)+p(-1,0)+p(0,1))\nonumber
\end{equation}
commute but $L$ is not flat. Indeed, if we do the truncated GNS-construction we have:

\begin{equation}
M_{L} = \begin{blockarray}{ccccccc}
\text{ } & 1 & X_{1} & X_{2} & X_{1}^{2} & X_{1}X_{2} & X_{2}^{2}\\
\begin{block}{c(ccc|ccc)}
1&1 & 0 & \frac{1}{4} & \frac{1}{2} & 0 & \frac{1}{4} \\
X_{1}&0 & \frac{1}{2} & 0 & 0 & 0 & 0 \\
X_{2} & \frac{1}{4} & 0 & \frac{1}{4} & 0 & 0  & \frac{1}{4}\\ \cline{2-7}
X_{1}^{2}&\frac{1}{2} & 0 & 0 & \frac{1}{2} & 0 & 0\\
X_{1}X_{2} & 0 & 0 & 0 & 0 & 0 & 0 \\
X_{2}^{2} & \frac{1}{4} & 0 & \frac{1}{4} & 0 & 0 & \frac{1}{4}\\
\end{block}
\end{blockarray}=\left(
\begin{array}{c|c}
\makebox{$A_{L}$}&\makebox{$B_{L}$}\\
\hline
  \vphantom{\usebox{0}}\makebox{$B_{L}^{T}$}&\makebox{$C_{L}$}
\end{array}\right)\nonumber
\end{equation}
is the associated moment matrix of the linear form $L$ and a basis of the truncated GNS-kernel of $L$ is $\left\langle X_{1}X_{2},X_{2}^{2}-X_{2}\right\rangle$. That is, the rank of $M_{L}$ is 4. And since in the kernel there is no polynomials of degree  less or equal to 1, we get that the unique element in the kernel of $L'$ is $0 $, then the truncated GNS space is $\frac{\R[X_{1},X_{2}]_{1}}{U_{L'}}\cong \R[X_{1},X_{2}]_{1}$, which implies the dimension of the GNS-truncated space is 3 and therefore $L$ is not flat by $(vi)$ in \ref{flat}. We can also verify that $L$ is not flat by computing $\widetilde{M_L}$. Indeed, in this case $A_{M}$ is invertible and $W_{M}$ is uniquely defined by $W_M=A_{M}^{-1}B_{M}$, then $\widetilde{M_{L}}$ reads:

\begin{equation}
\widetilde{M_{L}} = \left(
\begin{array}{ccc|ccc}
1 & 0 & \frac{1}{4} & \frac{1}{2} & 0 & \frac{1}{4} \\
0 & \frac{1}{2} & 0 & 0 & 0 & 0 \\
\frac{1}{4} & 0 & \frac{1}{4} & 0 & 0  & \frac{1}{4}\\ \cline{1-6}
\frac{1}{2} & 0 & 0 & \frac{\textbf{1}}{\textbf{3}} & 0 & 0\\
0 & 0 & 0 & 0 & 0 & 0 \\
\frac{1}{4} & 0 & \frac{1}{4} & 0 & 0 & \frac{1}{4}
\end{array}
\right)\nonumber
\end{equation}

Since $\widetilde{M_{L}}\neq M_{L}$ then $L$ is not flat by $(vii)$ in \ref{flat}.\\

Let us compute the truncated GNS multiplication operators of $L$. First note that:
\begin{equation}
 T_{L}\cong\frac{\R[X_{1},X_{2}]_{1}}{U_{L'}}=\left\langle \overline{1}^{L'},\overline{X_{1}}^{L'},\overline{X_{2}}^{L'} \right\rangle\nonumber
\end{equation}
Therefore by Remark \ref{auxiliar1} the truncated GNS space of $L$ is:
\begin{equation}
T_{L}=\left\langle \overline{1}^{L},\overline{X_{1}}^{L},\overline{X_{2}}^{L} \right\rangle\nonumber
\end{equation}
With the Gram-Schmidt orthonormalization process we get the following orthonormal basis with respect to the GNS product of $L$:
\begin{equation*}
 \underline{v} := \{ \overline{1}^{L}, \overline{\sqrt{2}X_{1}}^{L}, \overline{-\frac{\sqrt{3}}{3} + \frac{4\sqrt{3}}{3}X_{2}}^{L} \}
 \end{equation*}
 The matrices of the GNS-multiplication operators with respect to this orthonormal basis are:
$$
A_{1}:=M(M_{L,X_{1}},\underline{v}) = \left(
\begin{array}{ccc}
0 & \frac{\sqrt{2}}{2} & 0  \\
\frac{\sqrt{2}}{2}& 0 & -\frac{\sqrt{6}}{6} \\
0 & -\frac{\sqrt{6}}{6}  & 0 
\end{array}
\right)
$$

$$
A_{2}:=M(M_{L,X_{2}},\underline{v}) = \left(
\begin{array}{ccc}
\frac{1}{4} & 0 & \frac{\sqrt{3}}{4}  \\
0 & 0 & 0 \\
\frac{\sqrt{3}}{4} & 0  & \frac{3}{4} 
\end{array}
\right)
$$
It is easy to check that the truncated GNS multiplication operators of $L$ commute, that is $M_{L,X_{1}}\circ M_{L,X_{2}}-M_{L,X_{2}}\circ M_{L,X_{1}}=0$. Now since $M_{L,X_{1}}$ and $M_{L,X_{2}}$ commute we can do the simultaneous diagonalization on both of them, in order to find an orthonormal basis of the GNS truncation of $L$ consisting of common eigenvectors of $M_{L,X_{1}}$ and $M_{L,X_{2}}$. To do this we follow the same idea as in \cite[Algorithm 4.1, Step 1]{japoneses} and compute for a matrix:
\begin{center} 
$A=r_{1}A_{1}+r_{2}A_{2}$ where $r_{1}^{2}+r_{2}^{2}=1$
\end{center}
a matrix $P$ orthogonal such that $P^{T}AP$ is a diagonal matrix. In this case, we get for:
\begin{equation}
P=\left(\begin{array}{ccc}
   \frac{1}{2} & -\frac{\sqrt{6}}{4} & -\frac{\sqrt{6}}{4} \\ 
   0 & \frac{\sqrt{2}}{2}  &  -\frac{\sqrt{2}}{2}  \\
   \frac{\sqrt{3}}{2} & \frac{\sqrt{2}}{4} &  \frac{\sqrt{2}}{4}  \\ 
    \end{array}
    \right) \nonumber
\end{equation}
\begin{equation}
P^{T}A_{1}P=\left(\begin{array}{ccc}
   0 & 0 & 0 \\ 
   0 & -\frac{\sqrt{6}}{3}  &  0  \\
   0 & 0 &  \frac{\sqrt{6}}{3}  \\ 
    \end{array}
    \right)
		\text { and }
		P^{T}A_{2}P=\left(\begin{array}{ccc}
   1 & 0 & 0 \\ 
   0 & 0  &  0  \\
   0 & 0 &  0  \\ 
    \end{array}
    \right)\nonumber
		\end{equation}
Looking over the proof of \ref{importante} we can obtain the weights $\lambda_{1},\lambda_{2},\lambda_{3}\in\R_{>0}$ through the following operations:
\begin{equation}
P^{T}\left(\begin{array}{c}
   1  \\ 
   0 \\
   0  \\ 
    \end{array}
    \right)=\left(\begin{array}{c}
   \text{     }\frac{1}{2}\\ 
   -\frac{\sqrt{6}}{4}   \\
   -\frac{\sqrt{6}}{4}  \\ 
    \end{array}
    \right)\nonumber
\end{equation}
then $\lambda_{1}=(\frac{1}{2})^2$ and $\lambda_{2}=\lambda_{3}=(-\frac{\sqrt{6}}{4})^2$. Therefore we get the following decomposition: 
\begin{equation}
\widetilde{M}_{L}=\frac{1}{4}V_{2}(0,1)V_{2}(0,1)^{T}+\frac{3}{8}V_{2}(-\frac{\sqrt{6}}{3},0)V_{2}(-\frac{\sqrt{6}}{3},0)^{T}+\frac{3}{8}V_{2}(\frac{\sqrt{6}}{3},0)V_{2}(\frac{\sqrt{6}}{3},0)^{T}.\nonumber
\end{equation}
\end{ej}

\begin{ej}\label{volver}
Let us do the truncated GNS construction for the optimal solution that we got on the polynomial optimization problem described in \ref{porfavor}, that is:
\begin{equation}
\textbf{M}:=M_{2}(y) =  \begin{blockarray}{ccccccc}
\text{ } & 1 & X_{1} & X_{2} & X_{1}^2 & X_{1}X_{2} & X_{2}^2 \\
\begin{block}{r(rrr|rrr)}
 1 &   1.0000  &  0.7175  &  1.4698 &   0.5149  &  1.0547  &  2.1604  \\
 X_{1} &   0.7175  &  0.5149  &  1.0547 &   0.3694  &  0.7568  &  1.5502   \\
 X_{2} &  1.4698  &  1.0547  &  2.1604 &   0.7568  &  1.5502  &  3.1755  \\ \cline{2-7}
 X_{1}^2 &   0.5149  &  0.3694  &  0.7568 &   0.2651  &  0.5430  &  1.1123  \\
 X_{1}X_{2} &   1.0547  &   0.7568 &  1.5502 &   0.5430  &  1.1123 &   2.2785   \\
 X_{2}^2 &   2.1604  &  1.5502  &  3.1755 &   1.1123  &  2.2785  &  8.7737   \\
   \end{block}
   \end{blockarray}
\end{equation}
 Setting $\alpha:=\textbf{M}(1,2)$ and $\beta:=\textbf{M}(1,3)$, the truncated GNS kernel of M is:
\begin{equation}
U_{\textbf{M}}=\left\langle-\alpha+X_{1},-\beta+X_{2},-\alpha^{2}+X_{1}^2,-\alpha\beta+X_{1}X_{2}\right \rangle\nonumber
\end{equation}
the truncated GNS representation space is:
\begin{equation}
V_{\textbf{M}}=\left\langle 1, X_{2}^{2} \right\rangle \nonumber
\end{equation}
we have that:
\begin{equation}
U_{\textbf{M}}\cap\R[X_{1},X_{2}]_{1}=\left\langle-\alpha+X_{1},-\beta+X_{2}\right\rangle\nonumber
\end{equation}
We need to add the polynomial $1$ to $U_{\textbf{M}}\cap\R[X_{1},X_{2}]_{1}$ to get basis of $\R[X_{1},X_{2}]_{1}$ therefore we have that:
\begin{equation}
 \frac{\R[X_{1},X_{2}]_{1}}{U_{\textbf{M}}\cap\R[X_{1},X_{2}]_{1}}=\left\langle \overline{1}^{\textbf{M}'} \right\rangle\nonumber
\end{equation}
Thence by Remark \ref{auxiliar1} we get that: 
\begin{equation}
T_{\textbf{M}}=\left\langle \overline{1}^{L} \right\rangle\nonumber
\end{equation}

Since  $v:=\{ \overline{1}^{\textbf{M}}\}$ is also an orthonormal basis with respect to the GNS product of $L$ we can directly compute the matrices of truncated GNS multiplication operators of $\textbf{M}$:

\begin{align*}
M(M_{\textbf{M},X_{1}},v)=\poly^{-1}(X_{1}1)\textbf{M}\poly^{-1}(1)= &\left(\begin{array}{llllll}
0 & 1 & 0 & 0 & 0 & 0\\
  \end{array}\right)\textbf{M} \left(\begin{array}{l}
1\\
0\\
0\\
0\\
0\\
0\\ 
  \end{array}\right)=\left(\alpha \right)
\\
M(M_{\textbf{M},X_{2}},v)=\poly^{-1}(X_{2}1)\textbf{M}\poly^{-1}(1)= &\left(\begin{array}{llllll}
0 & 0 & 1 & 0 & 0 & 0\\
  \end{array}\right)\textbf{M} \left(\begin{array}{l}
1\\
0\\
0\\
0\\
0\\
0\\ 
  \end{array}\right)=\left(\beta\right)
\end{align*}

Therefore:
\begin{equation}
\widetilde{\textbf{M}}=V_{2}(\alpha,\beta)V_{2}(\alpha,\beta)^{T}\nonumber
\end{equation}
Then \textbf{M} admits a Gaussian quadrature rule. However it does not admit a quadrature rule. Indeed, suppose $\textbf{M}$ admits a quadrature rule  with $N$ nodes, then according to \ref{extremalbound}:
\begin{equation}
 2=\rk\textbf{M}\leq N \leq |V_{\C}(U_{\textbf{M}})| \nonumber
\end{equation}
But can easily see that $V_{\C}(U_{\textbf{M}})=\left(\alpha,\beta\right)$ and
\begin{equation} 
\rk \textbf{M}=2>|V_{\C}(\textbf{U}_{\textbf{M}})|=1 \nonumber
\end{equation}
prevents to  $\textbf{M}$ to have a quadrature rule.
\end{ej}
\begin{ej}
Let us consider the following generalized Hankel matrix in two variables of order $2$ taken from \cite[Example 1.13]{cf3}:
\begin{equation}
M = \begin{blockarray}{ccccccc}
& 1 & X_{1} & X_{2} & X_{1}^{2} & X_{1}X_{2} & X_{2}^{2}\\
\begin{block}{c(ccc|ccc)}
 1 &   1  &  1 &  2 &  2 &  0  &  3 \\
 X_{1} &   1  & 2  &  0 &  4   &  0  &  0  \\
	X_{2} &	1 & 0 & 3 & 0 & 0 & 9 \\ \cline{2-7}
 X_{1}^{2} &  2 &  4 &  0 &   9 &  0  &  0 \\
 X_{1}X_{2} &   0 &   0 &   0 &   0  &  0 &   0  \\
 X_{2}^{2} &  3  &  0  &  9 &   0  &  0 &  28  \\
	\end{block}
	\end{blockarray}
   \nonumber
\end{equation}
This matrix does not have a quadrature rule representation with the minimal number of nodes, as has been proved in \cite[Example 1.13]{cf3}, however it admits a Gaussian quadrature rule. Indeed, we can compute with the truncated GNS construction that: 
\begin{equation}
\widetilde{M}=\frac{1}{6}V_{2}(0,0)V_{2}(0,0)^{T}+\frac{1}{3}V_{2}(0,3)V_{2}(0,3)^{T}+\frac{1}{2}V_{2}(2,0)V_{2}(2,0)^{T}. \nonumber
\end{equation}
\end{ej}

The following corollary is a very well known result of Curto and Fialkow (see \cite[corollary 5.14]{cf0} ) in terms of quadrature rules instead of nonnegative measures. In \cite{byt} there is a proof about the correspondence between quadrature rules and nonnegative measures. This result of Curto and Fialkow uses tools of functional analysis like the the Spectral theorem and the Riesz representation theorem. Monique Laurent gave also a more elementary proof (see \cite[corollary 1.4]{rev} ) that uses a corollary of the  Hilbert Nullstellensatz and elementary linear algebra. The main contribution of this proof is that it does not need to find a flat extension of the linear form since the truncated GNS multiplication operators commute and we can apply directly the Theorem \ref{st}, and despite of it uses the Hilbert Nullstellensatz in the proof of Theorem \ref{lio}, we do not need to apply the  Hilbert Nullstellensatz to show that the nodes are in $\R^{n}$, since the nodes are real because its coordinates are the eigenvalues of a real symmetric matrix.

\begin{cor}\label{implicacion}
Suppose $L$ is flat then $L$ has a quadrature rule with $\rk(M_{L})$ many nodes (the minimal number of nodes).
\end{cor}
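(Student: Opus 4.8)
The plan is to chain together Proposition~\ref{importante2}, Theorem~\ref{importante}, the equivalent formulations of flatness in Proposition~\ref{flat}, and the lower bound in Proposition~\ref{extremalbound}. First I would invoke Proposition~\ref{importante2}: since $L$ is flat, the truncated GNS multiplication operators $M_{L,1},\ldots,M_{L,n}$ commute. Theorem~\ref{importante} then applies directly and produces a quadrature rule for $L$ on the set $G_{L}$ of \eqref{semiraro} with exactly $\dim(T_{L})$ nodes; by Remark~\ref{particular} this is already a Gaussian quadrature rule on $\R[\underline{X}]_{2d+1}$.

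The one step that needs an extra argument is upgrading this to a quadrature rule for $L$ on all of $\R[\underline{X}]_{2d+2}$ (the full domain of $L$). Here flatness enters through condition $(i)$ of Proposition~\ref{flat}, namely $\R[\underline{X}]_{d+1}=\R[\underline{X}]_{d}+U_{L}$. Consequently, in the definition of $G_{L}$ the factors $q_{i}$ may be taken in $\R[\underline{X}]_{d+1}$, so $G_{L}$ contains every product $\underline{X}^{\alpha}\underline{X}^{\beta}$ with $|\alpha|,|\beta|\le d+1$, hence every monomial of degree at most $2d+2$; since $G_{L}$ is by construction closed under addition, $\R[\underline{X}]_{2d+2}\subseteq G_{L}$. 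Thus the quadrature rule from Theorem~\ref{importante} restricts to a genuine quadrature rule for $L$.

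It then remains to identify the number of nodes and to argue minimality. By condition $(ii)$ of Proposition~\ref{flat}, flatness gives $T_{L}=V_{L}$, and in the proof of that proposition it is recorded that $\dim V_{L}=\rk M_{L}$, because $M_{L}$ is the transformation matrix of the map $\R[\underline{X}]_{d+1}\to\R[\underline{X}]^{*}_{d+1}$, $p\mapsto(q\mapsto L(pq))$, whose kernel is $U_{L}$. Hence the constructed quadrature rule has $\dim(T_{L})=\rk M_{L}$ nodes. Finally, Proposition~\ref{extremalbound} shows that any quadrature rule for $L$ has at least $\rk M_{L}$ nodes, so $\rk M_{L}$ is the minimal possible number, completing the proof.

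I expect the only non-routine point to be the passage from a quadrature rule on $G_{L}$ to one on $\R[\underline{X}]_{2d+2}$; everything else is bookkeeping with the seven equivalent characterizations of flatness and the rank identities already established. A minor care is needed to observe that $G_{L}$, being defined as a set of sums, is closed under addition, so exhibiting all monomials of degree $\le 2d+2$ inside $G_{L}$ really does force $\R[\underline{X}]_{2d+2}\subseteq G_{L}$.
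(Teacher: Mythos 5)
Your argument is correct and follows essentially the same route as the paper: flatness gives commutativity via Proposition~\ref{importante2}, Theorem~\ref{importante} yields a quadrature rule on $G_{L}$ with $\dim(T_{L})=\dim(V_{L})=\rk(M_{L})$ nodes, and flatness condition $(i)$ forces $G_{L}=\R[\underline{X}]_{2d+2}$. Your only additions are to spell out the (correct) monomial argument for $\R[\underline{X}]_{2d+2}\subseteq G_{L}$, which the paper dismisses as "easy to see," and to cite Proposition~\ref{extremalbound} explicitly for minimality, which the paper leaves implicit.
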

\begin{proof}
If $L$ is flat by Proposition \ref{importante2} the  truncated GNS multiplication operators of $L$ commute and applying \ref{importante} then $L$ has a quadrature rule on $G_{L}$  \eqref{semiraro}, with $\dim(T_{L})\stackrel{L \text{ is flat }}{=} \dim(V_{L})=\rk(M_{L})$ many nodes. Since $L$ is flat $\R[\underline{X}]_{d+1}=\R[\underline{X}]_{d}+U_{L}$ and therefore one can easily see that $G_{L}=\R[\underline{X}]_{2d+2}$. As a conclusion we get a quadrature rule for $L$ with $\rk(M_{L})$ many nodes.
\end{proof}

\section{Main Theorem}

In this section we will demonstrate that the commutativity of the truncated GNS multiplication operators of $L$ is equivalent to the matrix $W^{T}_{L}A_{L}W_{L}$ being Hankel.

 \begin{main}\label{main}

The following assertions are equivalent:
\begin{enumerate}
\item The truncated multiplication operators $M_{L,1},\ldots, M_{L,n}$ pairwise commute.
\item There exists $\hat{L}\in\R[\underline{X}]_{2d+2}^{*}$ such that $L=\hat{L}$ on $\R[\underline{X}]_{2d+1}$ and $\hat{L}$ is flat.
\end{enumerate}
\end{main}

\begin{proof}
1$\Rightarrow $2. By the theorem \ref{importante} there exist $a_1,\ldots,a_N\in\R^{n}$ pairwise different nodes and $\lambda_{1}>0,\ldots,\lambda_{N}>0$ weights, where $N:=\dim(T_{L})$ such that: $L(p)=\sum_{i=1}^{N}\lambda_{i}p(a_{i})$ for all $p\in G_{L}$, where $G_{L}$ was defined in \eqref{semiraro}. Let us define, $\hat{L}:=\sum_{i=1}^{N}\lambda_{i}\ev_{a_{i}}\in\R[\underline{X}]_{2d+2}^{*}$. We have shown in theorem \ref{importante} that $\hat{L}=L$ on $\R[\underline{X}]_{2d+1}$, $U_{L}\subset U_{\hat{L}}$ and obviously $\tilde{L}(\sum\R[\underline{X}]_{d+1}^{2})\subseteq \R_{\geq 0}$, so it remains to show that $\tilde{L}$ is flat, that is to say:
\begin{equation}
\dim V_{\hat{L}}=\dim T_{\hat{L}}\nonumber
\end{equation}
or equivalently using \ref{dimension}, it remains to show:
\begin{equation}
 \dim(\frac{\R[\underline{X}]_{d+1}}{U_{\hat{L}}})=\dim(\frac{\R[\underline{X}]_{d}}{U_{\hat{L}}\cap\R[\underline{X}]_{d}})\nonumber
 \end{equation}
Since $U_{\hat{L}}\cap\R[\underline{X}]_{d}=U_{L}\cap\R[\underline{X}]_{d}$ and using again proposition \ref{dimension}, we have the following:
\begin{equation}
\dim(\frac{\R[\underline{X}]_{d}}{U_{\hat{L}}\cap\R[\underline{X}]_{d}})=\dim(\frac{\R[\underline{X}]_{d}}{U_{L}\cap\R[\underline{X}]_{d}})\stackrel{\ref{dimension}}{=}\dim(T_{L})=N \nonumber
\end{equation}

therefore, in the following we will prove $\dim(\frac{\R[\underline{X}]_{d+1}}{U_{\tilde{L}}})=N$. For this, let us consider the following linear map, between euclidean vector spaces:

\begin{equation}\label{canonicalmap2}
\frac{\R[\underline{X}]_{d+1}}{U_{\hat{L}}} \hookrightarrow \frac{\R[\underline{X}]}{U_{\Lambda}},\overline{p}^{\hat{L}}\mapsto\overline{p}^{\Lambda}
\end{equation}
where $\Lambda:=\sum_{i=1}^{N}\lambda_{i}\ev_ {a_{i}}\in\R[\underline{X}]^{*}$. Notice that the canonical map \eqref{canonicalmap2} is well defined since $U_{\hat{L}}= U_{\Lambda}\cap\R[\underline{X}]_{d}$ and therefore it is injective. Then 

\begin{equation}
 \dim(\frac{\R[\underline{X}]_{d+1}}{U_{\hat{L}}})\leq \dim(\frac{\R[\underline{X}]}{U_{\Lambda}})\stackrel{\ref{zero}}{=} N \nonumber
\end{equation}

It remains to show $N\leq \dim(\frac{\R[\underline{X}]_{d+1}}{U_{\hat{L}}})$. But this is true, since:
\begin{equation}
 N=\dim(T_{L})\stackrel{\ref{dimension}}{=}\dim(\frac{\R[\underline{X}]_{d}}{U_{L}\cap\R[\underline{X}]_{d}})= \dim(\frac{\R[\underline{X}]_{d}}{U_{\hat{L}}\cap\R[\underline{X}]_{d}})\leq \dim (\frac{\R[\underline{X}]_{d+1}}{U_{\hat{L}}})\nonumber
\end{equation}

2$\Rightarrow$1. Since $\hat{L}$ is flat, then by \ref{importante2} we know that the  truncated GNS multiplication operators of $\hat{L}$ pairwise commute. Then by applying again \ref{importante} there exists $a_{1},\ldots,a_{N}\in\R^{n}$, pairwise different nodes, and $\lambda_{1}>0,\ldots,\lambda_{N}>0$ weights, with $N=\dim( T_{\hat{L}})$ such that if we set $\Lambda:=\sum_{i=1}^{N}\lambda_{i}\ev_{a_{i}}\in\R[\underline{X}]^{*}$, we get $\Lambda(p)=\hat{L}(p)=L(p)$ for all $p\in\R[\underline{X}]_{2d+1}$, and $U_{L}\subseteq U_{\hat{L}}\subseteq U_{\Lambda}$. Indeed notice that $U_{L}\subseteq U_{\hat{L}}$ since for $p\in U_{L}$, $L(pq)=\hat{L}(pq)=0$ for all $q\in\R[\underline{X}]_{d}$, and since $\hat{L}$ is flat this implies $p\in U_{ \hat{L}}$. Obviously $M_{\Lambda,i}$ pairwise commute for all $i\in\{1,\ldots,n\}$, since they are the original GNS operators modulo $U_{\Lambda}$ defined in \ref{GNSmodule}. In order to prove that $M_{L,i}$ pairwise commute for all $i\in\{1,\ldots,n\}$, let us first consider the linear isometry $\sigma_{1}$ \eqref{sigma1} of the proposition \ref{fbound}. Since $\sigma_{1}$ is a linear isometry is inmmediately injective, and then $\dim (T_{L})\leq \dim (\frac{\R[\underline{X}]}{U_{\Lambda}})$. Therefore we have the following inequalities:

\begin{equation*}
\begin{aligned}
N=\dim(T_{\hat{L}})=\dim(\frac{\R[\underline{X}]_{d}}{U_{\tilde{L}}\cap\R[\underline{X}]_{d}})
=\dim(\frac{\R[\underline{X}]_{d}}{U_{L}\cap\R[\underline{X}]_{d}})= &\\ \dim (T_{L})\leq \dim (\frac{\R[\underline{X}]}{U_{\Lambda}})\stackrel{\ref{zero}}{=}N &\\
\end{aligned}
\end{equation*}

then $\dim(T_{L})=\dim (\frac{\R[\underline{X}]}{U_{\Lambda}})$. Then $\sigma_{1}$, in this case, is in particular surjective and in conclusion is an isomorphism. With this result we be able to prove that the following diagram is commutative, for all $i\in\{1,\ldots,n\}$: 

\begin{equation}\label{diagram}
\xymatrix{
T_{L} \ar[d]^{\sigma_{1}} \ar[r]^{M_{L,i}} & T_{L} \\
\frac{\R[\underline{X}]}{U_{\Lambda}} \ar[r]^{M_{\Lambda,i}} &\frac{\R[\underline{X}]}{U_{\Lambda}} \ar[u]_{\sigma^{-1}_{1}} }
\end{equation}

That is to say $M_{L,i}=\sigma_{1}^{-1}\circ M_{\Lambda,i}\circ \sigma_{1}$. To show this let $p,q\in\R[\underline{X}]_{d}$, then we have:

\begin{center}
$\left\langle M_{L,i}(\overline{p}^{L}),\overline{q}^{L} \right\rangle_{L} = \left\langle \Pi_{L}(\overline{X_{i}p}^{L}),\overline{q}^{L} \right\rangle_{L}\stackrel{\Pi_{L}\circ\Pi_{L}=\Pi_{L}}{=}\left\langle\overline{X_{i}p}^{L},\overline{q}^{L} \right\rangle_{L}
=L(X_{i}pq)\stackrel{\Lambda=L\text{ on }\R[\underline{X}]_{2d+1}}{=}\Lambda(X_{i}pq)=\left\langle \overline{X_{i}p}^{\Lambda},\overline{q}^{\Lambda}\right\rangle_{\Lambda}=\left\langle \sigma_{1}\circ\sigma^{-1}_{1}(\overline{X_{i}p}^{\Lambda}),\overline{q}^{\Lambda} \right\rangle_{\Lambda} = \left\langle \sigma^{-1}_{1}(\overline{X_{i}p}^{\Lambda}),\sigma^{-1}_{1}(\overline{q}^{\Lambda}) \right\rangle_{L} =\left\langle \sigma^{-1}_{1}\circ M_{\Lambda,i}(\overline{p}^{\Lambda}),\overline{q}^{L}\right\rangle_{L}= \left\langle \sigma^{-1}_{1}\circ M_{\Lambda,i}\circ\sigma_{1} (\overline{p}^{L}),\overline{q}^{L} \right\rangle_{L}$
\end{center}

Finally we can conclude that the truncated GNS multiplication operators of $L$ pairwise commute, using the commutativity of the GNS multiplication operators of $\Lambda$. Indeed:
 
\begin{align}\label{aria}
M_{L,i}\circ M_{L,j}=\sigma^{-1}_{1}\circ M_{\Lambda,i}\circ\sigma_{1}\circ\sigma^{-1}_{1}\circ M_{\Lambda,j}\circ\sigma_{1}=\sigma^{-1}_{1}\circ M_{\Lambda,i}\circ M_{\Lambda,j}\circ\sigma_{1}=& \\ \nonumber
\sigma^{-1}_{1}\circ M_{\Lambda,j}\circ M_{\Lambda,i}\circ\sigma_{1}=\sigma^{-1}_{1}\circ M_{\Lambda,j}\circ\sigma_{1}\circ\sigma^{-1}_{1}\circ M_{\Lambda,i}\circ\sigma_{1}=M_{L,j}\circ M_{L,i}&
\end{align}
\end{proof}
\begin{teor}\label{matrices}
The following assertions are equivalent:
\begin{enumerate}
\item $M_{L,1},\ldots,M_{L,n}$ pairwise commute. 
\item $\widetilde{M_{L}}$ is a Generalized Hankel matrix
\end{enumerate}
\end{teor}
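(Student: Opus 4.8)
The plan is to read off Theorem~\ref{matrices} from the Main Theorem~\ref{main} together with a dictionary between flat extensions and generalized Hankel matrices. The Main Theorem already tells us that assertion (1) holds if and only if there is a flat $\hat L\in\R[\underline{X}]^{*}_{2d+2}$ with $\hat L=L$ on $\R[\underline{X}]_{2d+1}$, so it will suffice to show that the existence of such an $\hat L$ is equivalent to $\widetilde{M_{L}}$ being a generalized Hankel matrix, which is assertion (2).

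The first observation I would record is that the blocks $M_{L'}$, $C_{L}=M_{L'}W_{L}$, and the matrix $W_{L}$ depend only on the restriction of $L$ to $\R[\underline{X}]_{2d+1}$: indeed $M_{L'}$ collects the values $L(\underline{X}^{\gamma})$ with $|\gamma|\le 2d$, and $C_{L}$ (a submatrix of $B_{L}$) the values $L(\underline{X}^{\gamma})$ with $|\gamma|\le 2d+1$, while the only block of $\widetilde{M_{L}}$ not among these is the lower-right block $W_{L}^{T}M_{L'}W_{L}$. Hence $\widetilde{M_{L}}$ is a generalized Hankel matrix exactly when there is $y\in\R^{s_{2d+2}}$ with $M_{d+1}(y)=\widetilde{M_{L}}$; since $\widetilde{M_{L}}\succeq 0$ by Lemma~\ref{usefull}, Corollary~\ref{traduccion} rephrases this as: there exists $\hat L\in\R[\underline{X}]^{*}_{2d+2}$ with $\hat L(\sum\R[\underline{X}]^{2}_{d+1})\subseteq\R_{\ge 0}$ and $M_{\hat L}=\widetilde{M_{L}}$.

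Next I would check the two directions of the displayed equivalence. If $\widetilde{M_{L}}$ is generalized Hankel, take the $\hat L$ just produced. Since $M_{\hat L}=\widetilde{M_{L}}$ agrees with $M_{L}$ in the upper-left and off-diagonal blocks, $\hat L=L$ on $\R[\underline{X}]_{2d+1}$; in particular $\hat L'=L'$ and $C_{\hat L}=C_{L}$, so $W_{L}$ is a legitimate choice of $W_{\hat L}$, and by the choice-independence in Lemma~\ref{usefull} we get $\widetilde{M_{\hat L}}=\widetilde{M_{L}}=M_{\hat L}$; characterization (vii) of Proposition~\ref{flat} then says $\hat L$ is flat. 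Conversely, if $\hat L$ is flat with $\hat L=L$ on $\R[\underline{X}]_{2d+1}$, then $M_{\hat L}$ is a generalized Hankel matrix by construction, flatness gives $M_{\hat L}=\widetilde{M_{\hat L}}$ by Proposition~\ref{flat}, and again $\hat L'=L'$, $C_{\hat L}=C_{L}$ force $\widetilde{M_{\hat L}}=\widetilde{M_{L}}$, so $\widetilde{M_{L}}=M_{\hat L}$ is generalized Hankel.

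Chaining the equivalences then finishes the proof: (1) $\iff$ [a flat $\hat L$ with $\hat L=L$ on $\R[\underline{X}]_{2d+1}$ exists] $\iff$ (2). The only thing demanding care is the bookkeeping that $M_{L'}$, $C_{L}$, $W_{L}$, and hence $\widetilde{M_{L}}$ are determined by $L$ up to degree $2d+1$ and are insensitive to the choice of $W_{L}$; since both facts are already in hand (the latter is part of Lemma~\ref{usefull}), I do not expect a genuine obstacle — the statement is essentially the Main Theorem translated into the language of matrices.
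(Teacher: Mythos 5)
Your argument is correct, and it is worth noting where it diverges from the paper's own proof. For $(1)\Rightarrow(2)$ you do essentially what the paper does: invoke the Main Theorem~\ref{main} to get a flat $\hat{L}$ agreeing with $L$ on $\R[\underline{X}]_{2d+1}$, and then identify $\widetilde{M_{L}}$ with $M_{\hat{L}}$ (the paper does this last step by a rank argument, $\rk(M_{\hat L})=\rk(A_{L})\iff C_{\hat L}=W_{L}^{T}A_{L}W_{L}$, whereas you do it by observing that $M_{\hat L'}=M_{L'}$, $C_{\hat L}=C_{L}$, so $W_{L}$ is an admissible $W_{\hat L}$ and the choice-independence in \ref{usefull} plus \ref{flat}(vii) give $M_{\hat L}=\widetilde{M_{\hat L}}=\widetilde{M_{L}}$ --- same content, slightly different bookkeeping). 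The genuine difference is in $(2)\Rightarrow(1)$: the paper sets $\hat{L}:=L_{\widetilde{M_{L}}}$, notes it is flat so its operators commute by \ref{importante2}, and then transports commutativity back to $L$ by a separate commutative-diagram argument with the isometry $\sigma=\sigma_{\hat L}^{-1}\circ\sigma_{L}$, essentially re-proving the $2\Rightarrow 1$ mechanism of the Main Theorem; you instead observe that this same $\hat{L}$ is flat and agrees with $L$ on $\R[\underline{X}]_{2d+1}$ (since the generalized Hankel structure of $\widetilde{M_{L}}$ together with the block agreement pins down all moments of degree $\le 2d+1$, and $\widetilde{M_{L}}\succeq 0$ by \ref{usefull} makes \ref{traduccion} applicable), which is exactly condition (2) of the Main Theorem, and then quote the Main Theorem as a black box. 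Your route is the more economical one: it factors the theorem cleanly as ``(1) $\iff$ flat extension exists $\iff$ (2)'', with the second equivalence pure matrix dictionary, and avoids duplicating the isometry argument; the paper's version has the minor advantage of being self-contained in that direction and of exhibiting the conjugation $M_{L,i}=\sigma^{-1}\circ M_{\hat L,i}\circ\sigma$ explicitly, which it reuses elsewhere.
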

\begin{proof}
$(1)\Rightarrow(2)$ Assume $M_{L,1},\ldots,M_{L,n}$ pairwise commute. Then by Theorem \ref{main} there exists a linear form $\hat{L}\in\R[\underline{X}]^{*}_{2d+2}$ such that: $\hat{L}(\R[\underline{X}]^{2}_{d+1})\subseteq \R_{\geq 0}$, $L=\hat{L}$ on $\R[\underline{X}]_{2d+1}$ and $\hat{L}$ is flat, what implies by \ref{particular} that $\hat{L}$ has a quadrature rule representation and therefore $M_{\hat{L}}\succeq 0$ and $M_{\hat{L}}$ is a Generalized Hankel matrix. It is enough to show that $C_{\hat{L}}=W^{T}_{L}A_{L}W_{L}$. The last follows from the fact that $\hat{L}$ is flat and $L=\hat{L}$ on $\R[\underline{X}]_{2d+1}$ since:
\begin{center}
 $$\rk(M_{\hat{L}})=\rk
\left(
\begin{array}{c|c}
\makebox{$A_{L}$}&\makebox{$A_{L}W_{L}$}\\
\hline
  \vphantom{\usebox{0}}\makebox{$W_{L}^{T}A_{L}$}&\makebox{$C_{\hat{L}}$}
\end{array}
\right)\stackrel{\hat{L}\text{ is flat }}{=}\rk(A)\iff C_{\hat{L}}=W_{L}^{T}A_{L}W_{L}
$$ 
\end{center}
$(2)\Rightarrow(1)$ Suppose $\widetilde{M_{L}}$ is a Generalized Hankel matrix, and denote $\hat{L}:=L_{\widetilde{M_{L}}}\in\R[\underline{X}]^{*}_{2d+2}$. Since $\widetilde{M_{L}}$ is the moment matrix of the linear form $\hat{L}\in\R[\underline{X}]^{*}_{2d+2}$ then $\hat{L}$ is flat and by Theorem \ref{importante2} the truncated GNS multiplication operators of $\hat{L}$ commute. Now, to prove the truncated GNS operators commute let us define $\sigma:=\sigma_{\hat{L}}^{-1}\circ\sigma_{L}$ an isomorphism of euclidean vector spaces. We will prove that the following diagram is commutative:

\begin{equation}\label{diagramequis}
\xymatrix{
T_{L} \ar[d]^{\sigma} \ar[r]^{M_{L,i}} & T_{L} \\
T_{\hat{L}} \ar[r]^{M_{\hat{L},i}} & T_{\hat{L}} \ar[u]_{\sigma^{-1}} }\nonumber
\end{equation}

The diagram is commutative if and only if $M_{L,i}=\sigma\circ M_{\hat{L},i}\circ\sigma^{-1}$. To prove this equality let us take $p,q\in\R[\underline{X}]_{d}$. Then:

\begin{center}
$\left\langle M_{L,i}(\overline{p}^{L}),\overline{q}^{L} \right\rangle_{L} = \left\langle \Pi_{L}(\overline{X_{i}p}^{L}),\overline{q}^{L} \right\rangle_{L}\stackrel{\Pi_{L}\circ\Pi_{L}=\Pi_{L}}{=}\left\langle\overline{X_{i}p}^{L},\overline{q}^{L} \right\rangle_{L}
=L(X_{i}pq)\stackrel{\hat{L}=L\text{ on }\R[\underline{X}]_{2d+1}}{=}\hat{L}(X_{i}pq)=\left\langle \overline{X_{i}p}^{\hat{L}},\overline{q}^{\hat{L}}\right\rangle_{\hat{L}}=\left\langle \overline{X_{i}p}^{\hat{L}},\Pi_{\hat{L}}(\overline{q}^{\hat{L}}) \right\rangle_{\hat{L}}=\left\langle \Pi_{\hat{L}}(\overline{X_{i}p}^{\hat{L}}),\overline{q}^{\hat{L}}, \right\rangle_{\hat{L}}=\left\langle \sigma\circ\sigma^{-1}(\Pi_{\hat{L}}(\overline{X_{i}p}^{\hat{L}})),\overline{q}^{\hat{L}} \right\rangle_{\hat{L}} = \left\langle \sigma^{-1}(\Pi_{\hat{L}}(\overline{X_{i}p}^{\hat{L}})),\sigma^{-1}(\overline{q}^{\hat{L}}) \right\rangle_{L} =\left\langle \sigma^{-1}\circ M_{\hat{L},i}(\overline{p}^{\hat{L}}),\overline{q}^{L}\right\rangle_{L}= \left\langle \sigma^{-1}\circ M_{\hat{L},i}\circ\sigma (\overline{p}^{L}),\overline{q}^{L} \right\rangle_{L}$
\end{center}

Finally we can conclude the truncated GNS multiplication operators of $L$ commute using the commutativity of the truncated GNS multiplication operators of $\hat{L}$ in the identical way we already did in the previous Theorem in \eqref{aria}.
\end{proof}

\begin{cor}\label{otroflat}
Suppose $L$ is flat, then $\widetilde{M}$ is a generalized Hankel matrix.
\end{cor}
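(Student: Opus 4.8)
The plan is to deduce the statement by chaining together the two results that immediately precede it. First I would apply Proposition \ref{importante2}: since $L$ is assumed flat, the truncated GNS multiplication operators $M_{L,1},\ldots,M_{L,n}$ pairwise commute. Then I would invoke the implication $(1)\Rightarrow(2)$ of Theorem \ref{matrices}, which says precisely that pairwise commutativity of these operators is equivalent to $\widetilde{M_{L}}$ being a generalized Hankel matrix. Composing the two implications yields the corollary.

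A shorter and more direct route is also available, bypassing the commutativity discussion entirely. By Definition \ref{definiciones}, flatness of $L$ includes condition (vii) of Proposition \ref{flat}, namely the equality $M_{L}=\widetilde{M_{L}}$. Now $M_{L}$ is by definition a truncated moment matrix, i.e. a matrix of the form $(L(\underline{X}^{\alpha+\beta}))_{|\alpha|,|\beta|\le d+1}$, which has precisely the shape of the matrix in \eqref{generalizedHankel} (equivalently, by Corollary \ref{traduccion} it lies in $H_{d+1}$); hence $M_{L}$ is a generalized Hankel matrix, and therefore so is $\widetilde{M_{L}}=M_{L}$.

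There is essentially no obstacle here: the corollary is a purely formal consequence of material already established. The only points requiring a little care are to quote the correct direction of Theorem \ref{matrices} (the direction $(1)\Rightarrow(2)$) in the first argument, and, in the shortcut argument, to note explicitly that the truncated moment matrix $M_{L}$ is literally of generalized Hankel form, which is immediate from its definition. I would most likely present the first (chain) argument in the text, since it reuses the results in the order they were just proved, and perhaps remark that the shortcut via condition (vii) of Proposition \ref{flat} gives an alternative one-line proof.
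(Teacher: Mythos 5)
Your first argument is exactly the paper's proof: Proposition \ref{importante2} gives that flatness of $L$ forces the truncated GNS multiplication operators to pairwise commute, and Theorem \ref{matrices} then converts this into $\widetilde{M_{L}}$ being a generalized Hankel matrix. Your shortcut via condition (vii) of Proposition \ref{flat} is also correct and even more direct than the paper's route, since $M_{L}=(L(\underline{X}^{\alpha+\beta}))_{|\alpha|,|\beta|\leq d+1}$ has generalized Hankel shape by construction, so the flatness equality $\widetilde{M_{L}}=M_{L}$ immediately gives the claim.
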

\begin{proof}
If $L$ is flat then by Theorem \ref{importante2}  the truncated GNS multiplication operators of $L$ commute, and therefore by Theorem \ref{matrices} we get that this is equivalent to $\widetilde{M}_{L}$ being a generalized Hankel matrix.
\end{proof}

The following result uses the Theorem \ref{main} together with ideas from \cite{Put} and give us a generalization of a classical Theorem from  Mysovskikh \cite{mysov}, Dunkl and Xu \cite[Theorem 3.8.7]{xu} and Putinar  \cite[pages 189-190]{Put}. They proved the equivalence between the existence of a minimal Gaussian quadrature rule  with the commutativity of the truncated  GNS  multiplication operators for a positive definite linear form on $\R[\underline{X}]$. The generalization here comes from the fact that the result holds also if the linear form is defined on $\R[\underline{X}]_{2d+2}$ for $d\in\N_{0}$ and it is positive semidefinite   i.e. we do not assume $U_{L}=\{0\}$. We also provide a third equivalent condition in the result which is $W^{T}_{L}A_{L}W_{L}$ is a generalized Hankel matrix, a fact which seems no to have been noticed so far.

\begin{cor}\label{xu}
The following assertions are equivalent:
\begin{enumerate}
\item The linear form $L$ admits a Gaussian quadrature rule.
\item The truncated GNS multiplication operators of $L$ commute.
\item $\widetilde{M_{L}}$ is a generalized Hankel matrix.
\end{enumerate}
\end{cor}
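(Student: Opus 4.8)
The plan is to deduce everything from results already in the excerpt; in fact two of the three implications are essentially one-line citations. The equivalence $(2)\Leftrightarrow(3)$ is exactly Theorem \ref{matrices}. For $(2)\Rightarrow(1)$: if the truncated GNS multiplication operators of $L$ commute, Theorem \ref{importante} yields a quadrature rule for $L$ on $G_L$ with exactly $\dim(T_L)$ nodes; since $\R[\underline{X}]_{2d+1}\subseteq G_L$ (Remark \ref{particular}) and $\dim(T_L)$ is the minimal possible number of nodes for a quadrature rule on $\R[\underline{X}]_{2d+1}$ (Proposition \ref{fbound}), this is a Gaussian quadrature rule in the sense of Definition \ref{gaussian}.

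The only implication that needs an argument is $(1)\Rightarrow(2)$. Assume $L$ has a Gaussian quadrature rule: there are pairwise different $a_1,\ldots,a_N\in\R^n$ and weights $\lambda_1>0,\ldots,\lambda_N>0$ with $L(p)=\sum_{i=1}^N\lambda_i p(a_i)$ for all $p\in\R[\underline{X}]_{2d+1}$ and $N=\dim(T_L)$. Set $\Lambda:=\sum_{i=1}^N\lambda_i\ev_{a_i}\in\R[\underline{X}]^*$ and $\hat L:=\Lambda|_{\R[\underline{X}]_{2d+2}}\in\R[\underline{X}]^*_{2d+2}$. Then $\hat L=L$ on $\R[\underline{X}]_{2d+1}$ by construction, and $\hat L(\sum\R[\underline{X}]^2_{d+1})\subseteq\R_{\ge 0}$ because $\hat L(p^2)=\sum_i\lambda_i p(a_i)^2\ge 0$. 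By the implication $(2)\Rightarrow(1)$ of the Main Theorem \ref{main}, it then suffices to show that $\hat L$ is flat, i.e. $\dim V_{\hat L}=\dim T_{\hat L}$.

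I would establish flatness by computing both dimensions. Since $\hat L$ and $L$ agree on $\R[\underline{X}]_{2d}$, one has $U_{\hat L}\cap\R[\underline{X}]_d=U_L\cap\R[\underline{X}]_d$, so Proposition \ref{dimension} gives $\dim T_{\hat L}=\dim\big(\R[\underline{X}]_d/(U_L\cap\R[\underline{X}]_d)\big)=\dim T_L=N$. For the other dimension, using $\hat L(p^2)=\Lambda(p^2)$ for $p\in\R[\underline{X}]_{d+1}$ together with the kernel description in Proposition \ref{nucleo0}, one checks $U_{\hat L}=U_\Lambda\cap\R[\underline{X}]_{d+1}$, so that the canonical map $\R[\underline{X}]_{d+1}/U_{\hat L}\hookrightarrow\R[\underline{X}]/U_\Lambda$ is well defined and injective; hence $\dim V_{\hat L}\le\dim\big(\R[\underline{X}]/U_\Lambda\big)=|\{a_1,\ldots,a_N\}|=N$ by Proposition \ref{zero}. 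Since $T_{\hat L}$ is a subspace of $V_{\hat L}$, we also have $\dim V_{\hat L}\ge\dim T_{\hat L}=N$, so $\dim V_{\hat L}=\dim T_{\hat L}=N$ and $\hat L$ is flat. Applying the Main Theorem \ref{main} then gives $(1)\Rightarrow(2)$.

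The main obstacle is precisely this last bookkeeping with the three truncated kernels $U_L$, $U_{\hat L}$, $U_\Lambda$: one must match each quotient to the earlier result that computes its dimension (Proposition \ref{dimension} for $T_{\hat L}$, Proposition \ref{zero} for $\R[\underline{X}]/U_\Lambda$) and verify the elementary identities $U_{\hat L}\cap\R[\underline{X}]_d=U_L\cap\R[\underline{X}]_d$ and $U_{\hat L}=U_\Lambda\cap\R[\underline{X}]_{d+1}$. Beyond that, the corollary is just an assembly of Theorems \ref{matrices}, \ref{importante} and \ref{main}.
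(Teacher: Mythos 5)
Your proof is correct, and for the only nontrivial implication it takes a mildly different route than the paper. For $(2)\Leftrightarrow(3)$ and $(2)\Rightarrow(1)$ you do exactly what the paper does (Theorem \ref{matrices}, and Theorem \ref{importante} with Remark \ref{particular} and Proposition \ref{fbound}). For $(1)\Rightarrow(2)$ the paper argues directly: it uses Proposition \ref{zero} to get $\dim(\R[\underline{X}]/U_{\Lambda})=N=\dim T_{L}$, concludes that the isometry $\sigma_{1}$ of \eqref{sigma1} is an isomorphism, and then repeats the diagram argument from the proof of the Main Theorem to transport the (automatically commuting) operators $M_{\Lambda,i}$ back to $M_{L,i}$. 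You instead package the same ingredients differently: you restrict $\Lambda$ to $\R[\underline{X}]_{2d+2}$, verify via the kernel identities $U_{\hat L}\cap\R[\underline{X}]_{d}=U_{L}\cap\R[\underline{X}]_{d}$ and $U_{\hat L}=U_{\Lambda}\cap\R[\underline{X}]_{d+1}$ (together with Propositions \ref{dimension} and \ref{zero}) that $\hat L$ is flat, and then invoke the Main Theorem $(2)\Rightarrow(1)$ as a black box; this flatness computation essentially replays the paper's argument for $1\Rightarrow 2$ inside the Main Theorem, applied now to the given Gaussian rule. What your version buys is a cleaner modular structure and the explicit extra fact that a Gaussian quadrature rule yields a flat extension of $L$; what the paper's version buys is brevity (no re-verification of flatness, and no detour through Theorem \ref{importante} hidden inside the Main Theorem's $2\Rightarrow 1$), since the commuting-diagram computation is already written out there. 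Both rest on the same key results, so I consider your argument complete.
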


\begin{proof}
$(1)\Rightarrow (2)$. Assume that $L$ admits a Gaussian quadrature rule, that is to say $L(p)=\sum_{i=1}^{N}\lambda_{i}p(a_{i})$ for all $p\in\R[\underline{X}]_{2d+1}$ where $N:=\dim(T_{L})$, the points $a_{1},\ldots,a_{N}$ are pairwise different and $\lambda_{1}>0,\ldots,\lambda_{N}>0$. Let us set $\Lambda:=\sum_{i=1}^{N}\lambda_{i}\ev_{a_{i}}\in\R[\underline{X}]^{*}$. Using \ref{zero} we have the following:
\begin{equation}\label{susto}
\dim(\frac{\R[\underline{X}]}{U_{\Lambda}})=N=\dim T_{L}
\end{equation}
Let us consider again the linear isometry $\sigma_{1}$, already defined in \eqref{sigma1}:
\begin{align}
\sigma_{1}:T_{L}\longrightarrow \frac{\R[\underline{X}]}{U_{\Lambda}}, \overline{p}^{L}\mapsto\overline{p}^{\Lambda}\nonumber
\end{align} 
As we proved in \ref{fbound} is well defined and is an isometry, what implies $\sigma_{1}$ is injective, and considering that in this case  it holds \eqref{susto}, $\sigma_{1}$ is moreover an isomorphism. We continue as in the implication 2$\Rightarrow$1 of the proof of Theorem \ref{main}, showing that the diagram \eqref{diagram} is commutative, what together with the fact that $M_{\Lambda,i}$ always commute for all $i \in\{1,\ldots,n\}$ implies that the truncated GNS multiplication operators of $L$ commute. \\
$(2)\Rightarrow(1)$. This part was alredy proved in the Remark \ref{particular} as a consequece of the Theorem \ref{importante}.\\
$(2)\iff (3)$ It is the Theorem \ref{matrices}.
\end{proof}

The following result of M\"oller will give us a better lower bound in the number of nodes of a quadrature rule on $\R[\underline{X}]_{2d+1}$ than the very well-known bound given in Proposition \ref{fbound}. This bound, was already found for positive linear forms by M\"oller in 1975 and by Putinar in 1997 (\cite{moller},\cite{Put}). This result will show that the bound it is also true for positive semidefinite linear forms and it uses the same ideas as in \cite{Put}. We include the proof for the convenience of the reader. This bound will help us in polynomial optimization problems in which we know the number of global minimizers in advance, to discard optimality if this bound is bigger than the number of global minimizers, see Example \ref{madrugada} below.

\begin{teor}\label{moller}
The number of nodes $N$ of a Gaussian quadrature rule for $L$ satisfies:
\begin{equation}\label{cyber}
N\geq \dim (T_{L})+\frac{1}{2}\max_{1\leq j,k\leq n}(\rk[M_{L,j},M_{L,k}])
\end{equation}
\end{teor}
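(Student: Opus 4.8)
The plan is to exploit the fact that a Gaussian quadrature rule $L(p)=\sum_{i=1}^N\lambda_i p(a_i)$ on $\R[\underline{X}]_{2d+1}$ with $N$ nodes gives, via $\Lambda:=\sum_{i=1}^N\lambda_i\ev_{a_i}\in\R[\underline{X}]^*$, a finite-dimensional commutative quotient algebra $\R[\underline{X}]/U_\Lambda$ of dimension $N$ (by Proposition \ref{zero}), together with commuting self-adjoint multiplication operators $M_{\Lambda,1},\dots,M_{\Lambda,n}$ on it. The isometry $\sigma_1\colon T_L\hookrightarrow \R[\underline{X}]/U_\Lambda$ of \eqref{sigma1} identifies $T_L$ with a subspace of this algebra of codimension $N-\dim(T_L)$. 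The key observation — this is the M\"oller idea from \cite{Put} — is that although the operators $M_{\Lambda,j}$ commute globally, their compressions to the subspace $T_L$ need not, and the rank of the commutator $[M_{L,j},M_{L,k}]$ measures exactly how badly the orthogonal complement $W:=(\R[\underline{X}]/U_\Lambda)\ominus\sigma_1(T_L)$ fails to be invariant; a dimension count on $W$ then forces $\dim W$ to be at least $\tfrac12\rk[M_{L,j},M_{L,k}]$.

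First I would fix $j,k$ and write, for $\overline p\,^L\in T_L$ with $p\in\R[\underline{X}]_d$, the decomposition $M_{\Lambda,j}\sigma_1(\overline p\,^L)=\sigma_1(M_{L,j}\overline p\,^L)+w_j(p)$ with $w_j(p)\in W$, and similarly for $k$; here $w_j$ and $w_k$ are linear maps $T_L\to W$ and $M_{L,j}=\sigma_1^{-1}\circ P_{\sigma_1(T_L)}\circ M_{\Lambda,j}\circ\sigma_1$ is the compression. Next I would compute the commutator: using that $M_{\Lambda,j}M_{\Lambda,k}=M_{\Lambda,k}M_{\Lambda,j}$ on the whole space and carefully tracking the cross terms (the obstruction comes precisely from $M_{\Lambda,j}$ mapping part of $\sigma_1(T_L)$ into $W$ and back), one finds that $\sigma_1\circ[M_{L,j},M_{L,k}]\circ\sigma_1^{-1}$, viewed on $\sigma_1(T_L)$, factors through $W$: more precisely its image is contained in $P_{\sigma_1(T_L)}(M_{\Lambda,j}W + M_{\Lambda,k}W)$ and, dually, its kernel contains the orthogonal complement of the preimage of $W$ under $M_{\Lambda,j}$ and $M_{\Lambda,k}$. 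Since $[M_{L,j},M_{L,k}]$ is skew-symmetric, its rank is even, and the factorization through the two ``channels'' $M_{\Lambda,j}W$ and $M_{\Lambda,k}W$ (each of dimension at most $\dim W$) forces $\rk[M_{L,j},M_{L,k}]\le 2\dim W=2(N-\dim T_L)$, which rearranges to \eqref{cyber} after taking the maximum over $j,k$.

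The main obstacle I expect is making the middle step rigorous: precisely identifying, for a skew-symmetric compressed commutator, why its rank is bounded by twice the codimension of the invariant subspace rather than merely by the codimension. The clean way is to note that if $V=V_0\oplus W$ with $A,B$ self-adjoint commuting on $V$, and $A_0:=P_{V_0}AP_{V_0}$, $B_0:=P_{V_0}BP_{V_0}$ are the compressions, then $[A_0,B_0]=P_{V_0}(AP_WB-BP_WA)P_{V_0}$ (using $AB=BA$ to cancel the $P_{V_0}$–only terms), so the operator $[A_0,B_0]$ factors as $V_0\to W\to W\to V_0$, whence $\rk[A_0,B_0]\le\dim W$ — but skew-symmetry gives the sharper $\rk[A_0,B_0]\le 2\dim W$ is actually the wrong direction, so instead I would argue $\rk[A_0,B_0]\le \operatorname{rk}(P_{V_0}AP_W)+\operatorname{rk}(P_{V_0}BP_W)\le 2\dim W$ directly from $[A_0,B_0]=P_{V_0}AP_WBP_{V_0}-P_{V_0}BP_WAP_{V_0}$. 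Applying this with $V_0=\sigma_1(T_L)$, $W$ its complement in $\R[\underline{X}]/U_\Lambda$, $A=M_{\Lambda,j}$, $B=M_{\Lambda,k}$, and recalling $\dim W=N-\dim T_L$ yields the theorem after a maximum over $j,k$ and division by $2$; the only care needed is to verify $\sigma_1$ is a genuine isometric identification so that compressions correspond (which is exactly Proposition \ref{fbound} together with the commutative-diagram argument already used in Theorem \ref{main}).
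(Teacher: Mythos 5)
Your proposal is correct and is essentially the paper's own argument in coordinate-free form: the paper chooses an orthonormal basis of $\R[\underline{X}]/U_{\Lambda}$ adapted to the image of $T_{L}$, writes each $M_{\Lambda,i}$ as a block matrix $\left(\begin{smallmatrix} M(M_{L,i},\beta_{L}) & B_{i}\\ B_{i}^{T} & C_{i}\end{smallmatrix}\right)$ (your $B_{i}$ is exactly the block $P_{V_{0}}M_{\Lambda,i}P_{W}$), and deduces $[M(M_{L,j},\beta_{L}),M(M_{L,i},\beta_{L})]=B_{i}B_{j}^{T}-B_{j}B_{i}^{T}$, bounded in rank by $2(N-\dim T_{L})$ by the same rank-subadditivity you use. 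Your identification of the compression of $M_{\Lambda,i}$ with $M_{L,i}$ via the isometry $\sigma_{1}$ is the same inner-product verification the paper performs when showing the upper-left block is the matrix of $M_{L,i}$ in the orthonormal basis $\beta_{L}$, so the two proofs coincide in substance.
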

\begin{proof}
Assume $L$ has a  quadrature rule with $N$ nodes, that is to say, there exist  $\lambda_{1}>0,\ldots,\lambda_{N}>0$ weights and  $a_{1},\ldots,a_{N}$ in $\R^{n}$ pairwise different nodes, such that $L(p)=\sum_{i=1}^{N}\lambda_{i}p(a_{i})$ for all $p\in\R[\underline{X}]_{2d+1}$. Let us set $\Lambda:=\sum_{i=1}^{N}\lambda_{i}\ev_{a_{i}}\in\R[\underline{X}]^{*}$. By using the proposition \ref{zero} we have that:

\begin{center}
 $\dim(\frac{\R[\underline{X}]}{U_{\Lambda}})=N<\infty$. 
\end{center}
Then we can choose an orthonormal basis of $\frac{\R[\underline{X}]}{U_{\Lambda}}$. Let us denote such a basis by $\beta_{\Lambda}:=\{\overline{\beta_{1}}^{\Lambda}, \ldots ,\overline{\beta_{N}}^{\Lambda} \}$ for $\beta_{1},\ldots,\beta_{N}\in\R[\underline{X}]$ pairwise different. Then we have that the transformation matrix of the multiplication operators $M_{\Lambda,i}$ with respect to this orthonormal basis is:
\begin{equation}
(\Lambda(X_{i}\beta_{k}\beta_{j}))_{1\leq k,j \leq N}\nonumber
\end{equation}
The set $A:=\{\text{ }\overline{p}^{\Lambda}\text{ }|\text{ }p\in\R[\underline{X}]_{d}\}$ is a subspace of $\frac{\R[\underline{X}]}{U_{\Lambda}}$ so we can assume without loss of generality that $\beta_{1},\ldots,\beta_{r}\in\R[\underline{X}]_{d}$ where $r:=\dim A$ generate a basis of A. Then since $L=\Lambda$ on $\R[\underline{X}]_{2d+1}$, we obtain:
\begin{equation}\label{transformation}
 (M(M_{\Lambda,i},\beta_{\Lambda})):=(\Lambda(X_{i}\beta_{k}\beta_{j}))_{1\leq k,j \leq N}=\left(
\begin{array}{c|ccc}
 (L(X_{i}\beta_{k}\beta_{j}))_{1\leq k,j \leq r} & B_{i}\\
\hline
B_{i}^{t}& C_{i}\\
\end{array}
\right)
\end{equation}
$(M(M_{\Lambda,i},\beta_{\Lambda}))$ is the transformation matrix of te $i$-th truncated GNS multiplication operator of $\Lambda$ with respecto to the basis $\beta_{\Lambda}$ and where $B_{i}\in\R^{r\times N-r}$ and $C_{i}\in\R^{N-r\times N-r}$ are symmetric matrices. We will show that $\beta_{L}:=\{\overline{\beta_{1}}^{L}, \ldots, \overline{\beta_{r}}^{L}\}$ is an orthonormal basis of $T_{L}$.

Taking $\sigma_{1}$  the isometry defined in \eqref{sigma1}, we get:
\begin{equation}
\sigma_{1}(T_{L})=\{ \sigma_{1}(\overline{p}^{L})\text{ }|\text{ }\overline{p}^{L}\in T_{L} \}=\{\sigma_{1}(\overline{p}^{L})\text{ }|\text{ }p\in\R[\underline{X}]_{d}\}=\{\text{ }\overline{p}^{\Lambda}\text{ }|\text{ }p\in\R[\underline{X}]_{d}\}=A\nonumber
\end{equation}
Hence, we get:
\begin{equation}
 \sigma_{1}(T_{L})=\langle \overline{\beta_{1}}^{\Lambda},\ldots,\overline{\beta_{r}}^{\Lambda}\rangle \nonumber
\end{equation}
And since we have chosen $\beta_{i}\in\R[\underline{X}]_{d}$ for all $i\in \{1,\ldots,r\}$, then we have $\sigma_{1}(\overline{\beta_{i}}^{L})=\overline{\beta_{i}}^{\Lambda}$. Therefore $\beta_{L}:=\{\overline{\beta_{1}}^{L},\ldots,\overline{\beta_{r}}^{L}\}$ generate a basis of $T_{L}$. It remains to show that $\beta_{L}$ is orthonormal. To see that $\beta_{L}$ is orthonormal we use again the fact that $\sigma_{1}$ is an isometry and that $\sigma_{1}(T_{L})=A$. Indeed for $1\leq i,j \leq r$:

\begin{center}
$\delta_{ij}=\Lambda(\beta_{i}\beta_{j})=\langle \overline{\beta_{j}}^{\Lambda},\overline{\beta_{i}}^{\Lambda} \rangle_{\Lambda}\stackrel{}{=}\langle \sigma^{-1}_{1}(\overline{\beta_{j}}^{\Lambda}),\sigma^{-1}_{1}(\overline{\beta_{i}}^{\Lambda})\rangle_{L}=\langle \overline{\beta_{j}}^{L},\overline{\beta_{i}}^{L}\rangle_{L}$
\end{center}
Therefore we have shown that:
$$M(M_{\Lambda,i},\beta_{\Lambda})=
\left(
\begin{array}{c|c}
\makebox{$M(M_{L,i},\beta_{L})$}&\makebox{$B_{i}$}\\
\hline
  \vphantom{\usebox{0}}\makebox{$B_{i}^{T}$}&\makebox{$C_{i}$}
\end{array}
\right)
$$
where  we use the notation:
\begin{align}
(M(M_{L,i},\beta_{L})):=(L(X_{i}\beta_{k}\beta_{j}))_{1\leq k,j \leq N} \nonumber
\end{align}
to refer us to the transformation matrix of the $i$-th truncated GNS multiplication operators of $L$ with respect to the basis $\beta_{L}$. Using the fact that the matrices $M(M_{\Lambda,i},\beta_{\Lambda})$ commute, we have the following equality:
\begin{equation}
M(M_{L,j},\beta_{L})M(M_{L,i},\beta_{L})-M(M_{L,i},\beta_{L})M(M_{L,j},\beta_{L})=B_{i}B_{j}^{T}-B_{j}B_{i}^{T}\nonumber
\end{equation}
Therefore the following it holds:
\begin{equation}
\rk(B_{i}B_{j}^{T}-B_{j}B_{i}^{T})\leq 2\rk(B_{i}B_{j}^{T})\leq 2\rk(B_{i})\leq 2\min \{r,N-r\} \leq 2 (N-r)\nonumber
\end{equation}
and then:
\begin{center}
 $\rk[M(A_{L,j},\beta_{L}),M(A_{L,i},\beta_{L})]\leq 2(N-r)$
\end{center}
Since we have already proved $r=\dim T_{L}$. And then we can conclude:
\begin{equation}
N\geq \dim (T_{L})+\frac{1}{2}\max_{1\leq j,k\leq n}(\rk[M_{L,j},M_{L,k}])\nonumber
\end{equation}
\end{proof}
\begin{rem}
Note that we can use the previous Theorem \ref{moller} to show in a different way $(1)\Rightarrow(2)$ in the Corollary \ref{xu}. Indeed, let us suppose that $L$ has a Gaussian quadrature rule that is to say with $N=\dim T_{L}$ nodes. Using the inequality \eqref{cyber} we get that $\rk[M_{L,j},M_{L,k}]=0$ for $j,k\in\{ 1,\ldots,n\}$ therefore the truncated GNS multiplication operators of $L$ commute.
\end{rem}

\begin{ej}\label{madrugada}
Let us consider the following polynomial optimization problem taken from \cite{las}:
\begin{equation*}
\begin{aligned}
& {\text{minimize}}
& & f(x)=x_{1}^2x_{2}^2(x^2+y^2-1) \\
& \text{subject to}
& & x_{1},x_{2}\in\R
\end{aligned}
\end{equation*}
\end{ej}
By Calculus we know that the minimizers of $f$ occur in the real points common to the partial derivatives of $f$ (the real gradient variety) and we can easily check that this derivatives intersect in $4$ real points: $\left( \pm\frac{1}{\sqrt{3}},\pm\frac{1}{\sqrt{3}} \right)\in\R^{2}$. Therefore we know in advance that $(P)$ has at most $4$ minimizers. On other side, an optimal solution of the moment relaxation of order $8$ $(P_{8})$, that is $\textbf{M}:=M_{8,1}(y)$ read as:

{\scriptsize
\begin{align}
\left(\begin{array}{rrrrrrrrrrrrrrr} \nonumber
1.00&0.00&0.00&62.12&-0.00&62.12&0.00&0.00&0.00&0.00\\
0.00&62.12&-0.00&0.00&0.00&0.00&9666.23&-0.00&8.33&-0.00\\
0.00&-0.00&62.12&0.00&0.00&0.00&-0.00&8.33&-0.00&9666.23\\
62.12&0.00&0.00&9666.23&-0.00&8.33&0.00&-0.00&0.00&0.00\\
-0.00&0.00&0.00&-0.00&8.33&-0.00&-0.00&0.00&0.00&-0.00\\
62.12&0.00&0.00&8.33&-0.00&9666.23&0.00&0.00&-0.00&0.00\\
0.00&9666.23&-0.00&0.00&-0.00&0.00&3150633.17&-0.00&2.27&0.00\\
0.00&-0.00&8.33&-0.00&0.00&0.00&-0.00&2.27&0.00&2.27\\
0.00&8.33&-0.00&0.00&0.00&-0.00&2.27&0.00&2.27&-0.00\\
0.00&-0.00&9666.23&0.00&-0.00&0.00&0.00&2.27&-0.00&3150630.69\\
9666.23&0.00&-0.00&3150633.17&-0.00&2.27&0.42&-0.00&-0.00&0.00\\
-0.00&-0.00&0.00&-0.00&2.27&0.00&-0.00&-0.00&0.00&0.00\\
8.33&0.00&0.00&2.27&0.00&2.27&-0.00&0.00&0.00&-0.00\\
-0.00&0.00&-0.00&0.00&2.27&-0.00&0.00&0.00&-0.00&-0.00\\
9666.23&-0.00&0.00&2.27&-0.00&3150630.69&0.00&-0.00&-0.00&0.33
\end{array}\right.\\ \nonumber
\left.\begin{array}{rrrrrrrrrrrrrrr}
9666.23&-0.00&8.33&-0.00&9666.23\\
0.00&-0.00&0.00&0.00&-0.00\\
-0.00&0.00&0.00&-0.00&0.00\\
3150633.17&-0.00&2.27&0.00&2.27\\
-0.00&2.27&0.00&2.27&-0.00\\
2.27&0.00&2.27&-0.00&3150630.69\\
0.42&-0.00&-0.00&0.00&0.00\\
-0.00&-0.00&0.00&0.00&-0.00\\
-0.00&0.00&0.00&-0.00&-0.00\\
0.00&0.00&-0.00&-0.00&0.33\\
2466755083.36&-43.48&169698627.89&-6.08&134568970.57\\
-43.48&169698627.89&-6.08&134568970.57&15.08\\
169698627.89&-6.08&134568970.57&15.08&169698562.66\\
-6.08&134568970.57&15.08&169698562.66&25.61\\
134568970.57&15.08&169698562.66&25.61&2466752654.76
\end{array} \right)   \nonumber
\end{align}}
and the rank of the commutator of the truncated GNS multiplication operators is:
{\scriptsize \begin{align}
&\rk[M_{\textbf{M},X_{1}},M_{\textbf{M},X_{1}}]\nonumber= \\ \nonumber
&\rk \left(\begin{array}{rrrrrrrrrr} \nonumber
             0     &     0.00    &      0.00  &       -0.00   &      -0.00   &       0.00    &      0.00  &       -0.00   &       0.00     &    -0.00 \\
         -0.00    &         0   &      -0.00    &      0.00   &       0.00  &        0.00     &    -0.00  &        0.00   &       0.00   &      -0.00 \\
         -0.00     &     0.00    &         0     &     0.00     &     0.00    &      0.00    &      0.00 &        -0.00   &       0.00   &       0.00 \\
          0.00   &      -0.00   &      -0.00     &        0    &     -0.00     &    -0.00    &     -0.00 &         -0.00  &        0.00   &      -0.00 \\
          0.00  &       -0.00  &       -0.00     &     0.00    &         0    &      0.00    &      0.00 &          0.00   &       0.00  &        0.00 \\
         -0.00  &       -0.00  &       -0.00   &       0.00     &    -0.00    &         0    &     -0.00 &        -0.00   &       0.00   &      -0.00 \\
         -0.00  &        0.00  &       -0.00  &        0.00    &     -0.00    &      0.00    &         0 &      -313.91    &     -0.00   &     115.50 \\
          0.00    &     -0.00   &       0.00   &       0.00    &     -0.00    &      0.00    &    313.91 &            0     &     0.18    &      0.00 \\
         -0.00    &     -0.00   &      -0.00   &      -0.00    &     -0.00    &     -0.00   &       0.00 &        -0.18     &        0    &      0.05 \\
          0.00    &       0.00  &       -0.00  &        0.00    &     -0.00   &       0.00  &     -115.50&         -0.00    &     -0.05   &          0 
\end{array}\right)=4\nonumber
\end{align}}
If $\textbf{M}$ had a quadrature rule on $\R[X_{1},X_{2}]_{7}$ with $N$ nodes, since $f\in\R[X_{1},X_{2}]_{7}$ by \ref{motiv} $(iii)$ the $N$ nodes of the quadrature rule would be global minimizers of $f$ and $P^{*}=P^{*}_{8}$, and according to Theorem \ref{moller}:
\begin{equation}
N\geq \dim (T_{L})+\frac{1}{2}\max_{1\leq j,k\leq n}(\rk[M_{L,j},M_{L,k}])=10 + \frac{1}{2}4=12 \nonumber
\end{equation}
Therefore the polynomial $f$ would have at least $12$ global minimizers and this is a contradiction with the fact that $f$ has at most $4$ global minimizers. Notice that then $\textbf{M}$ does not have a  quadrature rule on $\R[\underline{X}]_{7}$, and in particular it does not have a quadrature rule.

\section{Algorithm for extracting minimizers in polynomial optimization problems}

As an application of all the previous results in this section we find a stopping criterion for the moment relaxation hierarchy, in other words, we find a condition on the optimal solution of $(P_{d})$ $L$, such that $L(f)=P^{*}_{d}=P^{*}$. In this this case we also find potencial global minimizers. In \cite{didie}  Henrion and Lasserre  the stopping criterion was $L$ to be flat and in this algorithm the stopping criterium is  $W_{L}^{T}M_{L'}W_{L}$ being Hankel, and  as we have already seen in \ref{otroflat} this condition is more general. It important to point out that despite this condition is more general than being flat we can not ensure optimality until we check that the candidate to minimizers are inside to the basic closed semialgebraic set $S$, condition that it is always possible to ensure if the set $S$ is a set described with linear polynomials, if the set $S$ is $\R^{n}$ or we have flat extension of some degree on the optimal solution, that is to say $\rk M_{d}(y)=\rk M_{s}(y)$ for sufficient small $s$, see  \cite[Theorem 6.18]{lau} or  \cite[Theorem 1.6]{cf2}  for a proof. At the end of this paper we summarize all this results in an algorithm with examples and also we illustrate polynomial optimization problems where this new stopping criterion allow us to conclude optimality even in case where the optimal solution is not flat as we already advance in \ref{porfavor} and in \ref{amigo}.

\begin{teor}\label{popcorn}
Let $f,p_{1},\ldots,p_{m}\in\R[\underline{X}]_{2d}$ and $L$ be an optimal solution of $(P_{2d})$. Suppose that $W_{L}^{T}A_{L}W_{L}$ is a generalized Hankel matrix. Then $L$ has a quadrature rule on $G_{L}$. Moreover, suppose the nodes of the quadrature rule lie on $S$ and $f\in\R[\underline{X}]_{2d-1}$, then $L(f)=P^{*}$ and the nodes  are global minimizers.
\end{teor}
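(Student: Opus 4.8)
The plan is to deduce the statement from machinery already built in the paper: Theorem~\ref{matrices} (commutativity of the truncated GNS operators is equivalent to $\widetilde{M_{L}}$ being a generalized Hankel matrix), Theorem~\ref{importante} (commuting operators produce a quadrature rule on $G_{L}$ with $\dim(T_{L})$ nodes), and Proposition~\ref{motiv}(iii) (transfer of optimality to the original problem). First I would fix the degree bookkeeping: writing $d':=d-1$, the form $L$ lives in $\R[\underline{X}]^{*}_{2d'+2}$, and since $L$ is feasible for $(P_{2d})$ we get $L(\sum\R[\underline{X}]^{2}_{d})\subseteq\R_{\geq 0}$, because any sum of squares of polynomials of degree at most $d$ lies in the quadratic module $M_{2d}(p_{1},\ldots,p_{m})$. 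Hence $L$ satisfies the standing assumption under which the truncated GNS construction is performed; in particular $A_{L}$ (which is $M_{L'}$ in the notation of Lemma~\ref{usefull}) is positive semidefinite and $\widetilde{M_{L}}$ is well defined. Only feasibility of $L$ is used for the first assertion; optimality enters in the ``moreover'' part.

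Next I would observe that the hypothesis that $W_{L}^{T}A_{L}W_{L}$ is a generalized Hankel matrix is equivalent to $\widetilde{M_{L}}$ itself being a generalized Hankel matrix. Indeed, in the block decomposition of $\widetilde{M_{L}}$ the blocks $A_{L}$ and $A_{L}W_{L}$ coincide with the corresponding blocks of the genuine moment matrix of $L$, so their entries are moments of $L$ of degree at most $2d-1$ and already obey the generalized Hankel pattern; the only block whose shape is in question is the bottom right one, $W_{L}^{T}A_{L}W_{L}$, which carries exactly the degree-$2d$ entries and is otherwise unconstrained. Thus $\widetilde{M_{L}}\in H_{d}$ precisely when $W_{L}^{T}A_{L}W_{L}$ is generalized Hankel. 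Applying Theorem~\ref{matrices}, the truncated GNS multiplication operators $M_{L,1},\ldots,M_{L,n}$ of $L$ pairwise commute, and Theorem~\ref{importante} then yields pairwise distinct nodes $a_{1},\ldots,a_{N}\in\R^{n}$ with $N=\dim(T_{L})$ and weights $\lambda_{1},\ldots,\lambda_{N}>0$ with $L(p)=\sum_{i=1}^{N}\lambda_{i}p(a_{i})$ for every $p\in G_{L}$, which is the first assertion.

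For the ``moreover'' I would appeal to Proposition~\ref{motiv}(iii). By Remark~\ref{particular} (equivalently, directly from the definition of $G_{L}$), $\R[\underline{X}]_{2d-1}=\R[\underline{X}]_{2d'+1}\subseteq G_{L}$, so the quadrature rule obtained above restricts to one valid on $\R[\underline{X}]_{2d-1}$, and by hypothesis $f\in\R[\underline{X}]_{2d-1}$ while all nodes lie in $S$. Hence $L$ is an optimal solution of $(P_{2d})$ possessing a quadrature rule on $\R[\underline{X}]_{l}$ with $l=2d-1\in\{1,\ldots,2d\}$, $f\in\R[\underline{X}]_{l}$, and nodes in $S$; Proposition~\ref{motiv}(iii) then gives $L(f)=P^{*}$ (and in fact $P^{*}=P^{*}_{2d}$) and that each $a_{i}$ is a global minimizer of $(P)$.

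The step I expect to need the most care is the identification in the second paragraph, namely arguing cleanly that the entries of $A_{L}$ and $A_{L}W_{L}$ are only moments of $L$ of degree at most $2d-1$, so that prescribing the generalized Hankel shape on the degree-$2d$ block $W_{L}^{T}A_{L}W_{L}$ cannot be inconsistent with the values already fixed by $A_{L}$ and $A_{L}W_{L}$. Once this is granted, the rest is a chain of citations of Theorems~\ref{matrices} and~\ref{importante} and Proposition~\ref{motiv}, together with the routine degree accounting.
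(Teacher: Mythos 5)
Your proposal is correct and follows essentially the same route as the paper's proof: pass from the Hankel hypothesis on $W_{L}^{T}A_{L}W_{L}$ to commutativity of the truncated GNS operators via Theorem \ref{matrices}, obtain the quadrature rule on $G_{L}$ from Theorem \ref{importante}, and conclude optimality and the minimizer property from Proposition \ref{motiv}(iii) since $\R[\underline{X}]_{2d-1}\subseteq G_{L}$. Your extra care in checking that the blocks $A_{L}$ and $A_{L}W_{L}$ consist of moments of degree at most $2d-1$, so that the hypothesis on $W_{L}^{T}A_{L}W_{L}$ is equivalent to $\widetilde{M_{L}}$ being generalized Hankel, is a detail the paper uses implicitly, and your degree bookkeeping and the observation that only feasibility is needed for the first assertion are both accurate.
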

\begin{proof}
Since $W^{T}_{L}A_{L}W_{L}$ is Hankel by Corollary \ref{matrices} and \ref{importante} there exists  exists nodes $a_{1},\ldots,a_{N}\in\R^{n}$ and weights $\lambda_{1}>0,\ldots,\lambda_{N}>0$, where $N:=\dim T_{L}$ such that: 
\begin{equation}
L(p)=\sum_{i=1}^{N}\lambda_{i}p(a_{i})\text{ for all }p\in G_{L}\nonumber
\end{equation}
Moreover if the nodes of this quadrature rule are contained in $S$ by \ref{motiv} (iii) $P^{*}=P^{*}_{d}=f(a_{i})$ for $i\in\{1,\ldots,N\}$.
\end{proof}

The following Lemma was already proved in \cite[lemma 2.7]{rev}. We will use it  to prove  the Corollary \ref{poliedro}.

\begin{lem}\label{interpolation}
Let $L=\sum_{i=1}^{N}\lambda_{i}\ev_{a_{i}}\in\R[\underline{X}]_{2d}^{*}$ for $a_{1},\ldots,a_{N}$ pairwise different points and $\lambda_{1}>0,\ldots,\lambda_{n}>0$ such that $L$ is flat. Then there exist interpolation polynomials $q_{1},\ldots,q_{N}\in\R[\underline{X}]_{2d}$ at the points $a_{1},\ldots,a_{N}$ of degree at most $d-1$.
\end{lem}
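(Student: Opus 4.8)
The strategy is to reduce everything to surjectivity of the evaluation map $\varepsilon\colon\R[\underline{X}]_{d-1}\to\R^{N}$, $q\mapsto(q(a_{1}),\ldots,q(a_{N}))$. Indeed, once $\varepsilon$ is onto, the preimages $q_{1},\ldots,q_{N}\in\R[\underline{X}]_{d-1}$ of the standard basis vectors of $\R^{N}$ satisfy $q_{j}(a_{i})=\delta_{ij}$, which is exactly the claim.

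First I would collect two facts about the full linear form $\Lambda:=\sum_{i=1}^{N}\lambda_{i}\ev_{a_{i}}\in\R[\underline{X}]^{*}$, which extends $L$. By the computation carried out in the proof of Proposition \ref{zero} together with Remark \ref{GNSmodule} (or Proposition \ref{flor}), $U_{\Lambda}=\{g\in\R[\underline{X}]\mid g(a_{i})=0\text{ for all }i\}$, and this is an ideal of $\R[\underline{X}]$. Moreover $U_{L}\subseteq U_{\Lambda}$: if $p\in U_{L}$ then $p\in\R[\underline{X}]_{d}$ and, by Proposition \ref{nucleo0}, $0=L(p^{2})=\sum_{i}\lambda_{i}p(a_{i})^{2}$, whence $p(a_{i})=0$ for every $i$ because all $\lambda_{i}>0$; here the positivity of the weights is used in an essential way.

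Next I would exploit flatness. Applying condition (i) of Proposition \ref{flat} with $d$ replaced by $d-1$ (the shift forced by $L\in\R[\underline{X}]^{*}_{2d}$), flatness of $L$ means $\R[\underline{X}]_{d}=\R[\underline{X}]_{d-1}+U_{L}\subseteq\R[\underline{X}]_{d-1}+U_{\Lambda}$. I would then propagate this one degree at a time: for $|\alpha|=d+1$ write $\underline{X}^{\alpha}=X_{i}\underline{X}^{\beta}$ with $|\beta|=d$, use the flatness relation to write $\underline{X}^{\beta}=q+u$ with $q\in\R[\underline{X}]_{d-1}$ and $u\in U_{L}$, and observe $\underline{X}^{\alpha}=X_{i}q+X_{i}u$ with $X_{i}q\in\R[\underline{X}]_{d}$ and $X_{i}u\in U_{\Lambda}$ since $U_{\Lambda}$ is an ideal. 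Hence $\R[\underline{X}]_{d+1}\subseteq\R[\underline{X}]_{d}+U_{\Lambda}$, and an induction on $k$, again using that $U_{\Lambda}$ is an ideal, gives $\R[\underline{X}]_{d+k}\subseteq\R[\underline{X}]_{d}+U_{\Lambda}$ for all $k\ge 0$, i.e.\ $\R[\underline{X}]=\R[\underline{X}]_{d}+U_{\Lambda}$. Combining with $\R[\underline{X}]_{d}\subseteq\R[\underline{X}]_{d-1}+U_{\Lambda}$ yields $\R[\underline{X}]=\R[\underline{X}]_{d-1}+U_{\Lambda}$.

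To conclude, fix $j\in\{1,\ldots,N\}$ and pick any polynomial $r_{j}\in\R[\underline{X}]$ of arbitrary degree (for instance a suitable product of affine forms separating the points) with $r_{j}(a_{i})=\delta_{ij}$. By the identity just proved there is $q_{j}\in\R[\underline{X}]_{d-1}$ with $r_{j}-q_{j}\in U_{\Lambda}$, and since $U_{\Lambda}$ is exactly the set of polynomials vanishing at all the $a_{i}$, we get $q_{j}(a_{i})=r_{j}(a_{i})=\delta_{ij}$ for every $i$; thus $\varepsilon$ is onto and $q_{1},\ldots,q_{N}$ are the required interpolation polynomials. Equivalently, the middle two paragraphs can be summarized as $\rk M_{L'}=N$, so that the canonical injection $\R[\underline{X}]_{d-1}/U_{L'}\hookrightarrow\R[\underline{X}]/U_{\Lambda}\cong\R^{N}$ is an isomorphism. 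The only delicate point is the propagation step: one must check that flatness at the single level $d$, together with $U_{L}\subseteq U_{\Lambda}$ and the ideal property of $U_{\Lambda}$, already forces $\R[\underline{X}]_{d-1}$ to surject onto $\R[\underline{X}]/U_{\Lambda}$; everything else is bookkeeping with the degree shift and with the identification of $\R[\underline{X}]/U_{\Lambda}$ with functions on $\{a_{1},\ldots,a_{N}\}$.
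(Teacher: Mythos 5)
Your proof is correct, but it takes a genuinely different route from the paper. The paper works inside the GNS framework: it considers the isometry $\sigma_{1}\colon T_{L}\to\R[\underline{X}]/U_{\Lambda}$, $\overline{p}^{L}\mapsto\overline{p}^{\Lambda}$, asserts it is an isomorphism because $\dim(\R[\underline{X}]/U_{\Lambda})=N$ (Proposition \ref{zero}), then pulls back standard interpolation polynomials $h_{j}$ through $\sigma_{1}^{-1}$ to get representatives $q_{j}\in\R[\underline{X}]_{d-1}$ and checks the interpolation property via the inner products $\langle\cdot,\cdot\rangle_{L}$ and $\langle\cdot,\cdot\rangle_{\Lambda}$. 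You instead argue entirely with vanishing ideals: you identify $U_{\Lambda}$ with the ideal of polynomials vanishing at $a_{1},\ldots,a_{N}$, show $U_{L}\subseteq U_{\Lambda}$ from positivity of the weights, and then propagate the flatness relation $\R[\underline{X}]_{d}=\R[\underline{X}]_{d-1}+U_{L}$ degree by degree, using that $U_{\Lambda}$ is an ideal, to obtain $\R[\underline{X}]=\R[\underline{X}]_{d-1}+U_{\Lambda}$; reducing any high-degree interpolants modulo $U_{\Lambda}$ then finishes the proof. What your approach buys is that it is more elementary (no isometry or inner-product bookkeeping, only the quadrature form and the ideal property) and, importantly, it makes explicit the surjectivity statement that the paper handles only implicitly: the paper's dimension count $\dim(\R[\underline{X}]/U_{\Lambda})=N$ by itself gives injectivity of $\sigma_{1}$ and hence only $\dim T_{L}\leq N$, whereas your stabilization argument ($E_{d-1}=E_{d}$ forces the evaluation image to be all of $\R^{N}$) is exactly the content needed to conclude $\dim T_{L}=N$, i.e.\ that $\sigma_{1}$ is onto; what the paper's route buys in exchange is that the same isometry $\sigma_{1}$ is reused elsewhere (Proposition \ref{fbound}, Theorem \ref{main}), so its proof is shorter in context.
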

\begin{proof}
Let us consider the isometry map \eqref{sigma1} already defined in \ref{fbound}:
\begin{equation}
\sigma_{1}:T_{L} \longrightarrow \frac{\R[\underline{X}]}{U_{\Lambda}},\overline{p}^{L}\mapsto\overline{p}^{\Lambda},\text{ for }p\in\R[\underline{X}]_{d-1} \nonumber
\end{equation}

It is moreover an isomorphism of euclidean vector spaces since $\dim(\frac{\R[\underline{X}]}{U_{\Lambda}})=N$ by Proposition \ref{zero}. It is very well known that there exits interpolation polynomial $h_{1},\ldots,h_{N}\in\R[\underline{X}]$ at the points $a_{1},\ldots,a_{N}$, such that $h_{i}(a_{j})=\delta_{i,j}$ for $i,j\in\{1,\ldots,n\}$. Define $\overline{q_{j}}^{L}:=\sigma^{-1}(\overline{p}^{\Lambda})$ for $q_{j}\in\R[\underline{X}]_{d-1}$. Then for $j\in\{1,\ldots,N\}$:
\begin{align}
0\leq\sum_{i=1}^{N}\lambda_{i}q_{j}^{2}(a_{i})=L(q_{j}^{2})=\left\langle \overline{q_{j}}^{L},\overline{q_{j}}^{L} \right\rangle_{L}=\left\langle \overline{h_{j}}^{\Lambda} , \overline{h_{j}}^{\Lambda} \right\rangle_{\Lambda}=\Lambda(h_{j}^{2})=\lambda_{j}\nonumber
\end{align}
and therefore $q_{j}(a_{i})=\delta_{i,j}$ for $i,j\in\{1,\ldots,N\}$.
\end{proof}

\begin{cor}\label{poliedro}
Let $p_{1},\ldots,p_{m}\in\R[\underline{X}]_{1}$  and $L$ be an optimal solution of $(P_{2d})$ with $f\in\R[\underline{X}]_{2d-1}$. Suppose that $W_{L}^{T}A_{L}W_{L}$ is a Hankel matrix. Then $L$ has a quadrature rule representation on $G_{L}$, $L(f)=P^{*}$ and the nodes are minimizers of $(P)$.
\end{cor}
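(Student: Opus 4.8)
The statement differs from Theorem~\ref{popcorn} only in that, when $S$ is a polyhedron, membership of the nodes in $S$ comes for free; so the plan is to run Theorem~\ref{popcorn} and then supply that missing ingredient. By Theorem~\ref{matrices} the hypothesis that $W_{L}^{T}A_{L}W_{L}$ is a Hankel matrix is equivalent to commutativity of the truncated GNS multiplication operators of $L$, so Theorem~\ref{popcorn} already provides a quadrature rule
\[
L(p)=\sum_{j=1}^{N}\lambda_{j}p(a_{j})\qquad(p\in G_{L}),\qquad N=\dim T_{L},
\]
with $\lambda_{j}>0$ and the $a_{j}\in\R^{n}$ pairwise distinct. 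It remains to show $\{a_{1},\dots,a_{N}\}\subseteq S$; once this is done, since $f\in\R[\underline{X}]_{2d-1}\subseteq G_{L}$ by Remark~\ref{particular}, Theorem~\ref{popcorn} (or Proposition~\ref{motiv}(iii), applied with $l=2d-1$) yields $L(f)=P^{*}$ and that the nodes are global minimizers of $(P)$.

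To produce interpolation polynomials at the nodes I would invoke the flat extension constructed inside the proof of the Main Theorem~\ref{main}: commutativity of the truncated GNS operators produces a flat linear form $\hat L\in\R[\underline{X}]_{2d}^{*}$ with $\hat L=L$ on $\R[\underline{X}]_{2d-1}$, and inspecting that proof (which passes through Theorem~\ref{importante}) shows $\hat L=\sum_{j=1}^{N}\lambda_{j}\ev_{a_{j}}$ with exactly the nodes and weights above. Since $\hat L=\sum_{j}\lambda_{j}\ev_{a_{j}}$ is then a flat element of $\R[\underline{X}]_{2d}^{*}$ with pairwise distinct $a_{j}$ and positive $\lambda_{j}$, Lemma~\ref{interpolation} supplies polynomials $q_{1},\dots,q_{N}\in\R[\underline{X}]_{2d}$ of degree at most $d-1$ with $q_{j}(a_{k})=\delta_{jk}$.

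The key step is then to evaluate $L(q_{j}^{2}p_{i})$ in two ways, for each $i\in\{1,\dots,m\}$ and $j\in\{1,\dots,N\}$. On one hand, since $\deg p_{i}\le 1$ and $\deg q_{j}\le d-1$ we have $2\deg q_{j}\le 2d-2\le 2d-\deg p_{i}$, so by Remark~\ref{gram} the polynomial $q_{j}^{2}p_{i}$ lies in $M_{2d}(p_{1},\dots,p_{m})$; as $L$ is feasible for $(P_{2d})$ this gives $L(q_{j}^{2}p_{i})\ge 0$. On the other hand $\deg(q_{j}^{2}p_{i})\le 2d-1$, hence $q_{j}^{2}p_{i}\in\R[\underline{X}]_{2d-1}\subseteq G_{L}$, and the quadrature rule yields $L(q_{j}^{2}p_{i})=\sum_{k}\lambda_{k}q_{j}(a_{k})^{2}p_{i}(a_{k})=\lambda_{j}p_{i}(a_{j})$. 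Comparing the two expressions, $\lambda_{j}p_{i}(a_{j})\ge 0$, and $\lambda_{j}>0$ forces $p_{i}(a_{j})\ge 0$. As $i$ and $j$ were arbitrary, every node satisfies $p_{1}(a_{j})\ge 0,\dots,p_{m}(a_{j})\ge 0$, i.e. $\{a_{1},\dots,a_{N}\}\subseteq S$, which closes the argument as described in the first paragraph.

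The only delicate point is the degree bookkeeping: Lemma~\ref{interpolation} must be applied so as to yield interpolants of degree at most $d-1$ rather than $d$, and it is exactly here that linearity of the $p_{i}$ is used, since it keeps $q_{j}^{2}p_{i}$ simultaneously inside $M_{2d}(p_{1},\dots,p_{m})$ (so that $L$ is nonnegative on it) and inside $G_{L}$ (so that the quadrature rule applies). One must also be careful that the flat extension handed over by the Main Theorem is genuinely of the atomic form $\sum_{j}\lambda_{j}\ev_{a_{j}}$ with the same nodes as the quadrature rule of Theorem~\ref{popcorn}, so that Lemma~\ref{interpolation} can be invoked verbatim; everything else is routine.
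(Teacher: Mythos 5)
Your proposal is correct and follows essentially the same route as the paper: reduce to showing the nodes lie in $S$, pass through the flat atomic form $\hat{L}=\sum_{j}\lambda_{j}\ev_{a_{j}}$ from Theorem~\ref{main}, use Lemma~\ref{interpolation} to get interpolants of degree at most $d-1$, and compare $L(q_{j}^{2}p_{i})\geq 0$ (feasibility, using $\deg p_{i}\leq 1$) with $L(q_{j}^{2}p_{i})=\lambda_{j}p_{i}(a_{j})$. The only cosmetic difference is that you evaluate $L(q_{j}^{2}p_{i})$ via the quadrature rule on $G_{L}$ while the paper writes it as $\hat{L}(q_{j}^{2}p_{i})$, which is the same computation.
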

\begin{proof}
From Theorem \ref{popcorn} there exists  exists nodes $a_{1},\ldots,a_{N}\in\R^{n}$ and weights $\lambda_{1}>0,\ldots,\lambda_{N}>0$, where $N:=\dim T_{L}$ such that: 
\begin{equation}
L(p)=\sum_{i=1}^{N}\lambda_{i}p(a_{i})\text{ for all }p\in G_{L}\nonumber
\end{equation}
To conclude the Corollary by Theorem \ref{popcorn} it is  enough to show that the nodes $a_{1},\ldots,a_{N}$ are contained in $S$. In Theorem \ref{main} we proved that $\hat{L}:=\sum_{i=1}^{N}\lambda_{i}\ev_{a_{i}}\in\R[\underline{X}]^{*}_{2d}$ is flat. Then by the Lemma \ref{interpolation} there are interpolation polynomials $q_{1},\ldots,q_{N}$ at the points $a_{1},\ldots,a_{N}$ having at most degree $d-1$. Since $\deg (q_{i}^{2}p_{j})\leq 2d -1$ then $q_{i}^{2}p_{j}\in T_{2d}(p_{1},\ldots,p_{m})$ and therefore:
\begin{equation}
 0\leq L(q_{i}^{2}p_{j})=\hat{L}(q_{i}^2p_{j})=\lambda_{i}p_{j}(a_{i})\nonumber
\end{equation}
This equality proves that $p_{j}(a_{i})\geq 0$ for  $j\in\{1,\ldots,m \}$ and $i\in\{1,\ldots,N \}$
so we can conclude $\{a_{1},\ldots,a_{N} \}\subseteq S$.
\end{proof}

\begin{rem}
The above results: Theorem \ref{popcorn} and Corollary \ref{poliedro} can be written in terms of an optimal solution of a Moment relaxation of even degree by taking as an optimal solution its restriction to one degree less.
\end{rem}

\begin{algorithm}
  \KwIn{A polynomial optimization problem $(\textbf{P})$ \eqref{Pop}.}
  \KwOut{The minimum $\textbf{P}^{*}$ and minimizers $\textbf{a}_{1},\ldots,\textbf{a}_{r}\subseteq \textbf{S}$ of $\textbf{(P)}$.}
  $\textbf{k}:=\max\{\deg{f},\deg{p_{1}},\ldots,\deg{p_{m}}\}$\;
  Compute an optimal solution $\textbf{M}:=M_{\lfloor\frac{k}{2} \rfloor}(\textbf{y})$ of the moment relaxation $(\textbf{P}_{\textbf{k}})$ and also compute $\textbf{W}_{\textbf{M}}$ matrix such that: $$\textbf{M}=
\left(
\begin{array}{c|c}
\makebox{$A_{M}$}&\makebox{$A_{M}\textbf{W}_{\textbf{M}}$}\\
\hline
  \vphantom{\usebox{0}}\makebox{$\textbf{W}_{\textbf{M}}^{T}A_{M}$}&\makebox{$C_{M}$}
\end{array}
\right)
$$ \;

\If {$\textbf{W}_{\textbf{M}}^{\textbf{T}}\textbf{A}_{\textbf{M}}\textbf{W}_{\textbf{M}}$ is a  Hankel matrix} { go to $\textbf{7}$}
\Else{$\textbf{k:=k+1}$ and go to $\textbf{2}$.}

\If {($\textbf{k}$ even and $f\in\R[\underline{X}]_{k-1})$ \textbf{or} ($\textbf{k}$ odd and $f\in\R[\underline{X}]_{k-2}$) } { go to $\textbf{14}$}
\Else{ \If { $C_{M}=W_{M}A_{M}W_{M}$} {go to $\textbf{14}$} 
        \Else  {$\textbf{k:=k+1}$ go to $\textbf{2}$}    }
Compute the truncated multiplication operators of $\textbf{M}$: $\textbf{A}_{\textbf{1,M}},\ldots,\textbf{A}_{\textbf{n,M}}$ and go to $\textbf{15}$.\;				
Compute an orthonormal basis $\{\textbf{v}_{1},\ldots,\textbf{v}_{r}\}$ of $\textbf{T}_{\textbf{M}}$ of common eigenvectors of the truncated multiplication operators such that $\textbf{A}_{\textbf{i,M}}\textbf{v}_{j}=\textbf{a}_{j,i}\textbf{v}_{j}$ and go to $\textbf{16}$.\;

\If  {$\textbf{a}_{1},\ldots,\textbf{a}_{n}\in\textbf{S}$ }{ go to $\textbf{20}$  }
\Else{ $\textbf{k:=k+1}$ and go to \textbf{2} }
We can conclude that the points $\{\textbf{a}_{1},\ldots,\textbf{ a}_{r} \}\subseteq \textbf{ S}$ are minimizers of $(\textbf{P})$, and $\textbf{P}^{*}=\textbf{f}(\textbf{a}_{i})$ for all $i\in\{1,\ldots,r\}$
  \caption{Algorithm for extracting minimizers of (P)}
  
\end{algorithm}

\section{Software and examples}
To find an optimal solution of the Moment relaxation and for the big calculations we have used the following softwares:
\begin{itemize}
\item YALMIP: developed by J. Löfberg. It is a toolbox for Modeling and Optimization in MATLAB. Published in  the Journal Proceedings of the CACSD Conference in 2004. For more information see: http\slash\slash:yalmip.github.io\slash.
\item SEDUMI: developed by J. F. Sturm. It is a toolbox for optimization over symmetric cones. Published in the Journal Optimization Methods and Software in 1999. For more information see: http:\slash\slash sedumi.ie.lehigh.edu\slash.
\item MPT: the Multi-parametric Toolbox is an open source, Matlab-based toolbox for parametric optimization, computational geometry and model predictive control. For more information see: http:\slash\slash people.ee.ethz.ch\slash~mpt\slash3\slash.
\item MATLAB and Statistics Toolbox Release 2016a, The MathWorks, Inc., Natick, Massachusetts, United States.
\end{itemize}

\begin{ej}\label{Rosenbrock}
Let us apply the algorithm to the following polynomial optimization problem, taken from \cite{tonto} :
\begin{equation*}
\begin{aligned}
& {\text{minimize}}
& & f(x)=100(x_{2}-x_{1}^2)^2+100(x_{3}-x_{2}^2)^2+(x_{1}-1)^2+(x_{2}-1)^2 \\
& \text{subject to}
& & -2.048\leq x_{1}\leq 2.048\\
& 
& & -2.048\leq x_{2}\leq 2.048\\
&
& & -2.048\leq x_{3}\leq 2.048 \\
\end{aligned}
\end{equation*}
We initialize $\textbf{k}=4$ and compute an optimal solution of the moment relaxation $(P_{4})$. In this case reads as:
{\scriptsize	
\begin{equation}
\textbf{M}:=M_{4,1}(y) =
\begin{blockarray}{ccccccccccc}
\text{ } & 1 & X_{1} & X_{2} & X_{3} & X_{1}^2 & X_{1}X_{2} & X_{1}X_{3} & X_{2}^2 & X_{2}X_{3} & X_{3}^2\\
\begin{block}{c(cccc|cccccc)}
1 &   1.0000  &  1.0000  &  1.0000  &  1.0000   & 1.0000   & 1.0000  &  1.0000 &  1.0000  &  1.0000 &   1.0000 \\
 X_{1} &   1.0000  &  1.0000  &  1.0000  &  1.0000   & 1.0000   & 1.0000  &  1.0000 &  1.0000  & 1.0000  &  1.0000 \\
  X_{2} &  1.0000 &   1.0000  &  1.0000  &  1.0000   & 1.0000   &  1.0000 &   1.0000 & 1.0000 &   1.0000 &   1.0000 \\
  X_{3} & 1.0000 &   1.0000  &  1.0000  &  1.0000   &  1.0000  &  1.0000 &   1.0000 &  1.0000 &   1.0000 &    1.0000 \\ \cline{2-11}
 X_{1}^2& 1.0000 &   1.0000  &  1.0000  &  1.0000   &  1.0000  &  1.0000 &   1.0000 &  1.0000 &   1.0000 &    1.0000 \\
 X_{1}X_{2} & 1.0000 &   1.0000  &  1.0000  &  1.0000   &  1.0000  &  1.0000 &   1.0000 &  1.0000 &   1.0000 &    1.0000 \\
X_{1}X_{3} & 1.0000 &   1.0000  &  1.0000  &  1.0000   &  1.0000  &  1.0000 &   1.0000 &  1.0000 &   1.0000 &    1.0000 \\
X_{2}^2  & 1.0000 &   1.0000  &  1.0000  &  1.0000   &  1.0000  &  1.0000 &   1.0000 &  1.0000 &   1.0000 &    1.0000 \\
X_{2}X_{3} &  1.0000 &   1.0000  &  1.0000  &  1.0000   &  1.0000  &  1.0000 &   1.0000 &  1.0000 &   1.0000 &    1.0000 \\
X_{3}^{2}  & 1.0000 &   1.0000  &  1.0000  &  1.0000   &  1.0000  &  1.0000 &   1.0000 &  1.0000 &   1.0000 &   \textbf{5.6502} \\
 \end{block}
    \end{blockarray}\nonumber
	\end{equation}}
We can calculate that:	
{\scriptsize
\begin{equation}
W^{T}_{\textbf{M}}A_{\textbf{M}}W_{\textbf{M}} =\left( \begin{array}{cccccc}
1.0000 &   1.0000  &  1.0000  &  1.0000   &  1.0000  &  1.0000 \\
 1.0000 &   1.0000  &  1.0000  &  1.0000   &  1.0000  &  1.0000 \\
 1.0000 &   1.0000  &  1.0000  &  1.0000   &  1.0000  &  1.0000 \\
 1.0000 &   1.0000  &  1.0000  &  1.0000   &  1.0000  &  1.0000 \\
 1.0000 &   1.0000  &  1.0000  &  1.0000   &  1.0000  &  1.0000 \\
 1.0000 &   1.0000  &  1.0000  &  1.0000   &  1.0000  & \textbf {1.0000}
\end{array}
\right) \nonumber
\end{equation}}
is a Hankel matrix but $f\notin\R[X_{1},X_{2},X_{3}]_{3}$ and $C_{\textbf{M}}\neq W^{T}_{\textbf{M}}A_{\textbf{M}}W_{\textbf{M}}$, so we need to try again with $\textbf{k}=5$ and in this case the solution of the moment relaxation $(P_{5})$ reads as:

{\scriptsize	
\begin{equation}
\textbf{M}:=M_{5,1}(y) =
\begin{blockarray}{ccccccccccc}
\text{ } & 1 & X_{1} & X_{2} & X_{3} & X_{1}^2 & X_{1}X_{2} & X_{1}X_{3} & X_{2}^2 & X_{2}X_{3} & X_{3}^2\\
\begin{block}{c(cccc|cccccc)}
1 &   1.0000  &  1.0000  &  1.0000  &  1.0000   & 1.0000   & 1.0000  &  1.0000 &  1.0000  &  1.0000 &   1.0000 \\
 X_{1} &   1.0000  &  1.0000  &  1.0000  &  1.0000   & 1.0000   & 1.0000  &  1.0000 &  1.0000  & 1.0000  &  1.0000 \\
  X_{2} &  1.0000 &   1.0000  &  1.0000  &  1.0000   & 1.0000   &  1.0000 &   1.0000 & 1.0000 &   1.0000 &   1.0000 \\
  X_{3} & 1.0000 &   1.0000  &  1.0000  &  1.0000   &  1.0000  &  1.0000 &   1.0000 &  1.0000 &   1.0000 &    1.0000 \\ \cline{2-11}
 X_{1}^2& 1.0000 &   1.0000  &  1.0000  &  1.0000   &  1.0000  &  1.0000 &   1.0000 &  1.0000 &   1.0000 &    1.0000 \\
 X_{1}X_{2} & 1.0000 &   1.0000  &  1.0000  &  1.0000   &  1.0000  &  1.0000 &   1.0000 &  1.0000 &   1.0000 &    1.0000 \\
X_{1}X_{3} & 1.0000 &   1.0000  &  1.0000  &  1.0000   &  1.0000  &  1.0000 &   1.0000 &  1.0000 &   1.0000 &    1.0000 \\
X_{2}^2  & 1.0000 &   1.0000  &  1.0000  &  1.0000   &  1.0000  &  1.0000 &   1.0000 &  1.0000 &   1.0000 &    1.0000 \\
X_{2}X_{3} &  1.0000 &   1.0000  &  1.0000  &  1.0000   &  1.0000  &  1.0000 &   1.0000 &  1.0000 &   1.0000 &    1.0000 \\
X_{3}^{2}  & 1.0000 &   1.0000  &  1.0000  &  1.0000   &  1.0000  &  1.0000 &   1.0000 &  1.0000 &   1.0000 &   \textbf{1.0014} \\
 \end{block}
    \end{blockarray}\nonumber
	\end{equation}}
	
we can calculate that:
{\scriptsize
\begin{equation}
W^{T}_{\textbf{M}}A_{\textbf{M}}W_{\textbf{M}} =\left( \begin{array}{cccccc}
1.0000 &   1.0000  &  1.0000  &  1.0000   &  1.0000  &  1.0000 \\
 1.0000 &   1.0000  &  1.0000  &  1.0000   &  1.0000  &  1.0000 \\
 1.0000 &   1.0000  &  1.0000  &  1.0000   &  1.0000  &  1.0000 \\
 1.0000 &   1.0000  &  1.0000  &  1.0000   &  1.0000  &  1.0000 \\
 1.0000 &   1.0000  &  1.0000  &  1.0000   &  1.0000  &  1.0000 \\
 1.0000 &   1.0000  &  1.0000  &  1.0000   &  1.0000  &  \textbf{1.0000}
\end{array}
\right) \nonumber
\end{equation}}

is a Hankel matrix but $f\notin\R[X_{1},X_{2},X_{3}]_{3}$ and $C_{\textbf{M}}\neq W^{T}_{\textbf{M}}A_{\textbf{M}}W_{\textbf{M}}$. In this case if we rounding we can consider $C_{\textbf{M}}=W^{T}_{\textbf{M}}A_{\textbf{M}}W_{\textbf{M}}$, i.e. $M$ flat, and continue with the algorithm and we could obtain already the minimizers, but to be more precise let us increase to $\textbf{k}=6$ and we get the following  optimal solution in the moment relaxation $(P_{6})$:
\begin{equation}\label{rosenbrock}
 \textbf{M}:=M_{6,1}(y)=
\left(
\begin{array}{c|c}
\makebox{$A_{\textbf{M}}$}&\makebox{$A_{\textbf{M}}W_{\textbf{M}}$}\\
\hline
  \vphantom{\usebox{0}}\makebox{$W_{\textbf{M}}^{T}A_{\textbf{M}}$}&\makebox{$C_{\textbf{M}}$}
\end{array}
\right)
\end{equation}
where:

{\scriptsize	
\begin{equation}
A_{\textbf{M}} = \begin{blockarray}{ccccccccccc}
\text{ } & 1 & X_{1} & X_{2} & X_{3} & X_{1}^2 & X_{1}X_{2} & X_{1}X_{3} & X_{2}^2 & X_{2}X_{3} & X_{3}^2\\
\begin{block}{c(cccccccccc)}
1 &    1.0000  &  1.0000  &  1.0000  &  1.0000   & 1.0000   & 1.0000  &  1.0000 &  1.0000  &  1.0000 &   1.0000 \\
X_{1} &    1.0000  &  1.0000  &  1.0000  &  1.0000   & 1.0000   & 1.0000  &  1.0000 &  1.0000  & 1.0000  &  1.0000 \\
X_{2} &    1.0000 &   1.0000  &  1.0000  &  1.0000   & 1.0000   &  1.0000 &   1.0000 & 1.0000 &   1.0000 &   1.0000 \\
X_{3} &    1.0000 &   1.0000  &  1.0000  &  1.0000   &  1.0000  &  1.0000 &   1.0000 &  1.0000 &   1.0000 &    1.0000 \\
X_{1}^2 &  1.0000 &   1.0000  &  1.0000  &  1.0000   &  1.0000  &  1.0000 &   1.0000 &  1.0000 &   1.0000 &    1.0000 \\
X_{1}X_{2} &  1.0000 &   1.0000  &  1.0000  &  1.0000   &  1.0000  &  1.0000 &   1.0000 &  1.0000 &   1.0000 &    1.0000 \\
X_{1}X_{3} &  1.0000 &   1.0000  &  1.0000  &  1.0000   &  1.0000  &  1.0000 &   1.0000 &  1.0000 &   1.0000 &    1.0000 \\
X_{2}^2 &  1.0000 &   1.0000  &  1.0000  &  1.0000   &  1.0000  &  1.0000 &   1.0000 &  1.0000 &   1.0000 &    1.0000 \\
X_{2}X_{3} &  1.0000 &   1.0000  &  1.0000  &  1.0000   &  1.0000  &  1.0000 &   1.0000 &  1.0000 &   1.0000 &    1.0000 \\
X_{3}^2 &  1.0000 &   1.0000  &  1.0000  &  1.0000   &  1.0000  &  1.0000 &   1.0000 &  1.0000 &   1.0000 &    1.0000\\
 \end{block}
\end{blockarray}
  \nonumber
	\end{equation}}

{\scriptsize	
\begin{equation}
W_{\textbf{M}} = \left(
\begin{array}{cccccccccc}
1 & 0 & 0 & 0 & 0 & 0 & 0 & 0 & 0 & 0 \\
0 & 1 & 0 & 0 & 0 & 0 & 0 & 0 & 0 & 0 \\
0 & 0 & 1 & 0 & 0 & 0 & 0 & 0 & 0 & 0 \\
0 & 0 & 0 & 1 & 0 & 0 & 0 & 0 & 0 & 0  \\
0 & 0 & 0 & 0 & 1 & 0 & 0 & 0 & 0 & 0  \\
0 & 0 & 0 & 0 & 0 & 1 & 0 & 0 & 0 & 0  \\
0 & 0 & 0 & 0 & 0 & 0 & 1 & 0 & 0 & 0  \\
0 & 0 & 0 & 0 & 0 & 0 & 0 & 1 & 0 & 0  \\
0 & 0 & 0 & 0 & 0 & 0 & 0 & 0 & 1 & 0  \\
0 & 0 & 0 & 0 & 0 & 0 & 0 & 0 & 0 & 1
 \end{array}
    \right)\nonumber
	\end{equation}}
 and 

{\scriptsize	
\begin{equation}
c_{\textbf{M}} =
\begin{blockarray}{ccccccccccc}
\text{ } & X_{1}^3 & X_{1}^2X_{2} & X_{1}^{2}X_{3} & X_{1}X_{2}^{2} & X_{1}X_{2}X_{3} &X_{1}X_{3}^{2} & X_{2}^{3} & X_{2}^{2}X_{3} & X_{2}X_{3}^{2} & X_{3}^{2} \\  
\begin{block}{c(cccccccccc)}
 X_{1}^{3} &   5.2880 &   0.9994  &  0.9994  &  2.4826  &  0.9989  &  2.4744  &  0.9988  &  1.0004  &  1.0020  &  1.0001 \\
X_{1}^{2}X_{2} &    0.9994 &   2.4826  &  0.9989  &  0.9988  &  1.0004  &  1.0020  &  2.4832  &  1.0010  &  1.6671  &  1.0007 \\
X_{1}^{2}X_{3} &    0.9994 &   0.9989  &  2.4744  &  1.0004  &  1.0020  &  1.0001  &  1.0010  &  1.6671  &  1.0007  &  2.4638 \\
X_{1}X_{2}^{2} &    2.4826 &   0.9988  &  1.0004  &  2.4832  &  1.0010  &  1.6671  &  0.9983  &  1.0007  &  1.0015  &  1.0001 \\
X_{1}X_{2}X_{3} &    0.9989 &   1.0004  &  1.0020  &  1.0010  &  1.6671  &  1.0007  &  1.0007  &  1.0015  &  1.0001  &  1.0016 \\
 X_{1}X_{3}^{2} &   2.4744 &   1.0020  &  1.0001  &  1.6671  &  1.0007  &   2.4638 &   1.0015 &   1.0001 &   1.0016 &   0.9912 \\
X_{2}^{3} &    0.9988 &   2.4832  &  1.0010  &  0.9983  &  1.0007  &  1.0015  &  5.2883  &  1.0071  &  2.4669  &  1.0072 \\
 X_{2}^{2}X_{3} &   1.0004 &   1.0010  &  1.6671  &  1.0007  &  1.0015  &  1.0001  &  1.0071  &  2.4669  &  1.0072  &  2.4579 \\
X_{2}X_{3}^{2}  &  1.0020 &  1.6671   &  1.0007  &  1.0015  &  1.0001  &  1.0016  &  2.4669  &  1.0072  &  2.4579  &  1.0040 \\
X_{3}^{3} &    1.0001 &   1.0007 &   2.4638  &  1.0001  &  1.0016  &  0.9912  &  1.0072 &   2.4579  &  1.0040  & 14.6604\\
\end{block}
\end{blockarray}
\nonumber
	\end{equation}}
	
We calculate that:

{\scriptsize	
\begin{equation}
W^{T}_{\textbf{M}}A_{\textbf{M}}C_{\textbf{M}}= \left(
\begin{array}{cccccccccc}

    1.0000  &  1.0000  &  1.0000  &  1.0000   & 1.0000   & 1.0000  &  1.0000 &  1.0000  &  1.0000 &   1.0000 \\
    1.0000  &  1.0000  &  1.0000  &  1.0000   & 1.0000   & 1.0000  &  1.0000 &  1.0000  & 1.0000  &  1.0000 \\
    1.0000 &   1.0000  &  1.0000  &  1.0000   & 1.0000   &  1.0000 &   1.0000 & 1.0000 &   1.0000 &   1.0000 \\
    1.0000 &   1.0000  &  1.0000  &  1.0000   &  1.0000  &  1.0000 &   1.0000 &  1.0000 &   1.0000 &    1.0000 \\
  1.0000 &   1.0000  &  1.0000  &  1.0000   &  1.0000  &  1.0000 &   1.0000 &  1.0000 &   1.0000 &    1.0000 \\
  1.0000 &   1.0000  &  1.0000  &  1.0000   &  1.0000  &  1.0000 &   1.0000 &  1.0000 &   1.0000 &    1.0000 \\
  1.0000 &   1.0000  &  1.0000  &  1.0000   &  1.0000  &  1.0000 &   1.0000 &  1.0000 &   1.0000 &    1.0000 \\
  1.0000 &   1.0000  &  1.0000  &  1.0000   &  1.0000  &  1.0000 &   1.0000 &  1.0000 &   1.0000 &    1.0000 \\
  1.0000 &   1.0000  &  1.0000  &  1.0000   &  1.0000  &  1.0000 &   1.0000 &  1.0000 &   1.0000 &    1.0000 \\
  1.0000 &   1.0000  &  1.0000  &  1.0000   &  1.0000  &  1.0000 &   1.0000 &  1.0000 &   1.0000 &    1.0000\\
 \end{array}
    \right)\nonumber
	\end{equation}}
is a generalized Hankel matrix and $f\in\R[X_{1},X_{2},X_{3}]_{5}$. By Theorem \ref{poliedro} we have optimality with optimal value $P^{*}=P^{*}_{6}=2.3527\cdot 10^{-8}\approx 0$. Finally we get that the matrices of the truncated GNS operators with respect to the orthonormal basis $v:=\left\langle 1 \right\rangle_{\textbf{M}}$ are:
\begin{equation}
M(M_{\textbf{M},X_{1}},v)=\left( 1 \right), M(M_{\textbf{M},X_{2}},v)=\left( 1 \right)\text{ and } M(M_{\textbf{M},X_{2}},v)=\left( 1 \right)\nonumber
\end{equation}

The operators are in diagonal form so we have already an orthonormal basis of $T_{\textbf{M}}$ of common eigenvectors of the truncated GNS operators of $\textbf{M}$ $v:=\left\langle 1 \right\rangle_{\textbf{M}}$, then a global minimizer is $\left(1,1,1\right)\in\R^{n}$, and:
\begin{equation}
\widetilde{\textbf{M}}=V_{3}(1,1,1)V_{3}^{T}(1,1,1).\nonumber
\end{equation}
\end{ej}

\begin{ej}\label{nonconvex}
Let us consider the following polynomial optimization problem, defined on a non convex closed semialgebraic set, taken from \cite[problem 4.6]{griegos} : 
\begin{equation*}
\begin{aligned}
& {\text{minimize}}
& & f(x)=-x_{1}-x_{2}\\
& \text{subject to}
& & x_{2}\leq 2x_{1}^4-8x_{1}^3+8x_{1}^2+2\\
& 
& & x_{2}\leq 4x_{1}^4-32x_{1}^3+88x_{1}^2-96x_{1}+36\\
&
& & 0\leq x_{1} \leq 3\\
&
& & 0\leq x_{2} \leq 4\\
\end{aligned}
\end{equation*}
We initialize $\textbf{k}=4$. An optimal solution of $(P_{4})$ reads as:

\begin{equation}\label{cuartoej}
\textbf{M}:=M_{4,1}(y) = \begin{blockarray}{ccccccc}
\text{ } & 1 & X_{1} & X_{2} & X_{1}^2 & X_{1}X_{2} & X_{2}^{2}\\
\begin{block}{c(ccc|ccc)}
1 & 1.0000  &  3.0000 &   4.0000 &   9.0000 &  12.0000 &  16.0000 \\
X_{1} & 3.0000  &  9.0000  & 12.0000 &  27.0000 &  36.0000 &  48.0000 \\
X_{2} & 4.0000 &  12.0000 &  16.0000 &  36.0000 &  48.0000 &  64.0000\\ \cline{2-7}
X_{1}^{2} &  9.0000  & 27.0000 &   36.0000 &  107.6075 &  109.0814 &  176.3211\\
X_{1}X_{2} &   12.0000 &  36.0000 &  48.0000 &  109.0814 & 176.3211 &  194.9661\\
X_{2}^{2} &   16.0000 &  48.0000 &  64.0000 &  176.3211 &  194.9661 &  368.5439\\
 \end{block}
\end{blockarray}
	\end{equation}

and 

\begin{equation}
\widetilde{\textbf{M}} = \begin{blockarray}{ccccccc}
&1 & X_{1} & X_{2} & X_{1}^{2} & X_{1}X_{2} & X_{2}^{2} \\
\begin{block}{c(ccc|ccc)}
1 &1.0000  &  3.0000 &   4.0000 &   9.0000 &  12.0000 &  16.0000 \\
X_{1} & 3.0000  &  9.0000  & 12.0000 &  27.0000 &  36.0000 &  48.0000 \\
X_{2} & 4.0000 &  12.0000 &  16.0000 &  36.0000 &  48.0000 &  64.0000\\ \cline{2-7}
X_{1}^{2} &    9.0000  & 27.0000 &   36.0000 & 81.000 &108.000 & 144.000 \\
X_{1}X_{2} &   12.0000 &  36.0000 &  48.0000 & 108.000 & 144.000 &192.00 \\
X_{2}^{2} &   16.0000 &  48.0000 &  64.0000 & 144.000 & 192.000 & 256.000\\
\end{block}
\end{blockarray}
	\end{equation}

taking for example:

\begin{center}
$W_{\textbf{M}} = \left(
\begin{array}{lll}

   9 & 0 & 0\\
	 0 & 0 & 0 \\
   0 & 3 & 4 \\

   \end{array}
    \right)$
\end{center}

 $\widetilde{\textbf{M}}$ is a generalized Hankel matrix. The matrix of the truncated GNS multiplaction operators with respect to the orthonormal basis $v=\left\langle \overline{1}^{\textbf{M}} \right\rangle$ are:
\begin{center}
$M(M_{\textbf{M},1},v) = \left(
\begin{array}{l}
 3 \\
   \end{array}
    \right)$ and $M(M_{\textbf{M},1},v)= \left(
\begin{array}{l}
 4 \\
   \end{array}
    \right)$
\end{center}

Hence the candidate to minimizer is $\left(3,4\right)$, however it does not lie in $S$, then $\left(3,4\right)$ cannot be a minimizer and $f(3,4)=-7$ cannot be the minimum. Then we try with a relaxation of order $\textbf{k}=5$. An optimal solution of the moment relaxation $(P_{5})$ is the following:

\begin{equation}\label{otroejemplo}
\textbf{M}:=M_{5,1}(y)= \begin{blockarray}{ccccccc}
\text{ } & 1 & X_{1} & X_{2} & X_{1}^2 & X_{1}X_{2} & X_{2}^{2}\\
\begin{block}{c(ccc|ccc)}
  1 &       1.00      &    2.67   &       4.00   &       8.00   &      10.67     &    16.00 \\
  X_{1} &        2.67     &     8.00  &       10.67  &       24.00  &       32.00    &     42.67 \\
 X_{2} &         4.00     &    10.67  &       16.00  &       32.00  &       42.67    &     64.00\\ \cline{2-7}
X_{1}^{2} &          8.00     &    24.00  &       32.00  &       72.00  &       96.00    &    128.00 \\
  X_{1}X_{2} &       10.67    &     32.00  &       42.67  &       96.00  &      128.00    &    170.67 \\
  X_{2}^{2} &       16.00    &     42.67  &       64.00  &      128.00  &      170.67    &    256.00 \\
	\end{block}
	\end{blockarray}
				\end{equation}

In this case $C_{\textbf{M}}=W_{\textbf{M}}^{T}A_{\textbf{M}}W_{\textbf{M}}$, therefore $\textbf{M}$ is flat and in particular the operators commute by \ref{importante2}.  After the simultaneous diagonalization of the truncated GNS operators we get that the candidate to minimizers are $(0,4)\notin S$ and $(3,4)\notin S$. Hence we try with a relaxation of order $\textbf{k}=6$. An optimal solution of the moment relaxation $(P_{6})$ reads as:
{\scriptsize
\begin{equation}\label{nopuedomas}
\textbf{M}:=M_{6,1}(y)=
\begin{blockarray}{ccccccccccc}
\text{ } & 1 & X_{1} & X_{2} & X_{1}^{2} & X_{1}X_{2} & X_{2}^{2} & X_{1}^{3} & X_{1}^{2}X_{2} & X_{1}X_{2}^{2} & X_{2}^{3} \\
\begin{block}{c(cccccc|cccc)}
 1 & 1.00      &    2.67     &     4.00    &      8.00   &      10.67  &       16.00   &      24.00   &      32.00 &          42.67     &    64.00 \\
X_{1} &          2.67   &  8.00    &     10.67    &     24.00   &      32.00   &      42.67  &       72.00  &       96.00   &     128.00   &     170.67 \\
 X_{2} &         4.00  &        10.67  &       16.00    &     32.00   &      42.67   &      64.00    &     96.00    &    128.00  &      170.67     &   256.00 \\
 X_{1}^{2} &         8.00    &     24.00   &      32.00   &      72.00    &     96.00    &    128.00   &     216.00 &       288.00  &      384.00  &      512.00 \\
  X_{1}X_{2} &       10.67    &     32.00   &      42.67   &      96.00   &     128.00   &     170.67    &    288.00 &       384.00     &   512.00  &      682.66 \\
  X_{2}^{2} &       16.00    &     42.67   &      64.00   &     128.00   &     170.67    &    256.00  &      384.00 &       512.00  &      682.66  &     1024.00 \\ \cline{2-11}
   X_{1}^{3} &      24.00  &       72.00   &      96.00    &    216.00   &     288.00    &    384.00  &   204299.70 &       870.25  &    19035.69  &     1583.15 \\
   X_{1}^{2}X_{2} &      32.00   &      96.00  &      128.00   &     288.00  &      384.00   &     512.00  &      870.25 &     19035.69  &     1583.15 &     18023.54 \\
 X_{1}X_{2}^{2}  &      42.67   &     128.00  &      170.67   &     384.00  &      512.00  &      682.66   &   19035.69 &      1583.15   &   18023.54  &     2822.34 \\
X_{2}^{3} &         64.00  &      170.67  &      256.00   &     512.00   &     682.66  &     1024.00   &    1583.15  &    18023.54  &     2822.34 &     58336.42 \\
\end{block}
\end{blockarray}
\end{equation}}

and
\begin{equation}
W_{\textbf{M}}^{T}A_{\textbf{M}}W_{\textbf{M}}=\begin{blockarray}{ccccc}
& X_{1}^{3} & X_{1}^{2}X_{2} & X_{1}X_{2}^{2} & X_{2}^{3}\\
\begin{block}{c(cccc)}
   X_{1}^{3} &     648.00   &      863.99  &     1151.99  &     1535.99 \\
 X_{1}^{2}X_{2} &       863.99   &    1151.99   &    1535.99   &    2047.99 \\
 X_{1}X_{2}^{2} &      1151.99   &    1535.99   &    2047.99   &    2730.65 \\
 X_{2}^{3} &      1535.99   &    2047.99   &    2730.65   &    4095.99 \\
			\end{block}
			\end{blockarray}\nonumber
			\end{equation}
			
is a Hankel matrix. However we get the same candidate to minimizers as in the previous relaxation which does not belong to $S$. Finally we increase to $\textbf{k}=7$, and we get after rounding, the following optimal solution of $(P_{7})$:

{\scriptsize
\begin{equation}
\textbf{M}:=M_{7,1}(y)=\begin{blockarray}{ccccccccccc}
\text{ } & 1 & X_{1} & X_{2} & X_{1}^{2} & X_{1}X_{2} & X_{2}^{2} & X_{1}^{3} & X_{1}^{2}X_{2} & X_{1}X_{2}^{2} & X_{2}^{3} \\
\begin{block}{c(cccccc|cccc)}
1 &   1.00      &    2.33    &      3.18     &     5.43   &       7.40   &      10.10  &       12.64  &       17.25 &        23.53    &     32.11 \\
 X_{1} &        2.33    &      5.43   &       7.40     &    12.64  &       17.25     &    23.53   &      29.45   &      40.18   &      54.82    &     74.80 \\
 X_{2} &         3.18    &      7.40   &      10.10      &   17.25   &      23.53   &      32.11   &      40.18     &    54.82   &      74.80   &     102.07 \\
  X_{1}^{2} &        5.43    &     12.64   &      17.25   &      29.45   &      40.18   &      54.82   &      68.60   &      93.60   &     127.72   &     174.26 \\
  X_{1}X_{2} &        7.40   &      17.25    &     23.53    &     40.18   &      54.82   &      74.80   &      93.60  &      127.72     &   174.26    &    237.77 \\
  X_{2}^{2} &       10.10  &       23.53   &      32.11  &       54.82  &       74.80   &     102.07   &     127.72 &       174.26    &    237.77  &      324.42 \\ \cline{2-11}
  X_{1}^{3} &       12.64   &      29.45    &     40.18   &      68.60  &       93.60   &     127.72  &      159.81 &       218.05   &     297.51    &    405.94 \\
   X_{1}^{2}X_{2} &      17.25    &     40.18    &     54.82    &     93.60  &      127.72  &      174.26   &     218.05  &      297.51    &    405.94     &   553.88 \\
  X_{1}X_{2}^{2} &       23.53    &     54.82     &    74.80    &    127.72  &      174.26    &    237.77   &     297.51  &      405.94   &     553.88  &      755.74 \\
  X_{2}^{3} &       32.11    &     74.80   &     102.07   &     174.26  &      237.77  &      324.42   &     405.94  &      553.88   &     755.74  &     1031.16\\
				\end{block}
				\end{blockarray}
 \nonumber
\end{equation}}

It holds that $\widetilde{\textbf{M}}=\textbf{M}$, therefore in particular $\widetilde{\textbf{M}}$ is a generalized Hankel matrix and the truncated multiplication operators commute. The matrices of the truncated GNS multiplication operators with respect to the orthonormal basis $v:=\{\overline{1}^{\textbf{M}} \}$ are:

\begin{center}
$M(M_{\textbf{M},X_{1}},v) = \left(
\begin{array}{l}
2.3295

   \end{array}
    \right)$
		 and 
	$M(M_{\textbf{M},X_{2}},v)  = \left(
\begin{array}{l}
3.1785

   \end{array}
    \right)$
			
\end{center}

Since $\left( 2.3295, 3.1785 \right)\in S$ then it is also a minimizer and we proved optimality $P^{*}=P^{*}_{7}=-5.5080$.
\end{ej}

\begin{ej}\label{flatcase}
Let us considerer the following polynomial optimization problem taken from \cite[example 5]{las}:
\begin{equation*}
\begin{aligned}
& \text{minimize}
& & f(x)=-(x_{1}-1)^2-(x_{1}-x_{2})^2-(x_{2}-3)^2 \\
& \text{subject to}
& & 1-(x_{1}-1)^2\geq 0\\
& 
& & 1-(x_{1}-x_{2})^2\geq 0\\
&
& & 1-(x_{2}-3)^2\geq 0 \\
\end{aligned}
\end{equation*}

For $k=2$ and $k=3$ in the algorithm, the modified moment matrix of the optimal solution of the Moment relaxation is generalized Hankel and we get as a potencial minimizers, after the truncated GNS construction, $\left(1.56,2.18\right)\in S$ in both relaxations, however $f\notin\R[X_{1},X_{2}]_{1}$ so we can not conclude $\left(1.56,2.18\right)$ is a global minimum. When we increase to $k=4$, and compute an optimal solution  of the moment relaxation $(P_{4})$. We get:

\begin{equation}
\textbf{M}:=M_{4}(y) = 
\begin{blockarray}{ccccccc}
& 1 & X_{1} & X_{2} & X_{1}^2 & X_{1}X_{2} & X_{2}^{2}\\
\begin{block}{c(ccc|ccc)}
1 &   1.0000  &   1.4241  &  2.1137  &   2.2723 &   3.0755 &   4.5683 \\
X_{1} &    1.4241  &   2.2723  &  3.0755  &   3.9688 &   4.9993 &   6.8330 \\
X_{2} &   2.1137  &   3.0755  &  4.5683  &  4.9993  &   6.8330 &  10.1595 \\ \cline{2-7}
X_{1}^{2} &    2.2723  &   3.9688  &  4.9993  &  7.3617  &   8.8468 &   11.3625 \\
X_{1}X_{2} &    3.0755  &   4.9993  &  6.8330  &  8.8468  &   11.3625 &   15.7120 \\
X_{2}^{2} &   4.5683  &   6.8330  & 10.1595  &  11.3625 &  15.7120 &  23.3879 \\
\end{block}
\end{blockarray} \nonumber
	\end{equation}

and we can verify $\widetilde{\textbf{M}}=\textbf{M}$. Hence in this case $\textbf{M}$ is flat, then it is clear that $\widetilde{\textbf{M}}$ is a generalized Hankel matrix implying that the truncated GNS multiplication operators of $\textbf{M}$ commute. We proceed to do the truncated GNS construction and we get the following orthonormal basis of $W_{\textbf{M}}$:
\begin{equation}
W_{\textbf{M}}=\left\langle \overline{1}^{\textbf{M}},\overline{-2.08816+2.0234X_{1}}^{\textbf{M}},\overline{-6.0047-0.9291X_{1}+3.4669X_{2}}^{\textbf{M}} \right\rangle \nonumber
\end{equation}
Denote $v:=\{ \overline{1}^{\textbf{M}},\overline{-2.08816+2.0234X_{1}}^{\textbf{M}},\overline{-6.0047-0.9291X_{1}+3.4669X_{2}}^{\textbf{M}}\}$ such a basis. Then the transformation matrices of the truncated GNS multiplication operators with respect to this basis are:

\begin{equation}
A_{1}:=M(M_{\textbf{M},X_{1}},v)=\left(
\begin{array}{ccc}

    1.4241  &  0.4942 &    0.0000 \\
    0.4942  &  1.5759 &   0.0000 \\
    0.0000  &  0.0000 &   2.0000 \\
\end{array}
\right)\nonumber
\end{equation}

\begin{equation}
A_{2}:=M(M_{\textbf{M},X_{2}},v)=\left(
\begin{array}{ccc}
    2.1137  &  0.1324  &  0.2884 \\
    0.1324  &  2.1543  &  0.3361 \\
    0.2884  &  0.3361  &  2.7320 \\
\end{array}
\right)\nonumber
\end{equation}

Again we follow the same idea as in \cite[algorithm 4.1 Step 1]{japoneses} to apply simultaneous diagonalization to the matrices $A_{1}$ and $A_{2}$. For this we find the orthogonal matrix $P$ that diagonalize a matrix of the following form: 
\begin{equation}
A=r_{1}A_{1}+r_{2}A_{2}\text{ where }r_{1}^{2}+r_{2}^{2}=1\nonumber
\end{equation}
For 
\begin{equation}
P=\left(
\begin{array}{rrr}
    0.7589  &  0.5572  &  0.3371 \\
   -0.6512  &  0.6493  &  0.3929 \\
    0.0000  & -0.5177  &  0.8556 
\end{array}
\right)\nonumber
\end{equation}

we get the following diagonal matrices:
\begin{equation}
P^{T}A_{1}P=\left(
\begin{array}{rrr}
    1.0000  &  0.0000 &  -0.0000 \\
    0.0000  &   2.0000 &   0.0000 \\
   -0.0000  &  0.0000 &   2.0000 
\end{array}
\right),
P^{T}A_{2}P=\left(
\begin{array}{rrr}
 2.0000 &  -0.0000  &  0.0000 \\
   -0.0000 &    2.0000 &  -0.0000 \\
    0.0000 &  -0.0000  &  3.0000 
\end{array}
\right)\nonumber
\end{equation}
 
and with the operation: 

\begin{equation}
P^{T}\left(\begin{array}{c}
   1  \\ 
   0 \\
   0  \\ 
    \end{array}
    \right)=\left(\begin{array}{c}
   0.7589\\ 
   0.5572   \\
   0.3371  \\ 
    \end{array}
    \right)\nonumber
\end{equation}
 we get the square roots of the weights of the quadrature formula. Then we have the following decomposition:
\begin{equation}
\textbf{M}=\widetilde{\textbf{M}}=0.5759V_{2}(1,2)V_{2}^{T}(1,2)+0.3105V_{2}(2,2)V_{2}^{T}(2,2)+0.1137V_{2}(2,3)V_{2}^{T}(2,3)\nonumber
\end{equation}

In this case the points $\left(1,2\right)$,$\left(2,2\right)$, and $\left(2,3 \right)$ lie on $S$, as we already know since it holds the condition of the Theorem 1.6 in \cite{cf2}, and therefore they are global minimizers of $(P)$, and the minimum is $P^{*}=P^{*}_{4}=-2$.

\end{ej}

\section*{Acknowledgements}
This is part of the ongoing research for the authors PhD thesis. I want to thank Markus Schweighofer for supervising my thesis and many fruitful discussions.

\end{document}